\definecolor{blue}{rgb}{0,0,1}
\definecolor{red}{rgb}{1,0,0}
\definecolor{green}{rgb}{0,.7,0}
\newcommand{\proofend}{\hspace*{\fill} $\Box$\\}
\newcommand{\diam}{\hspace*{\fill} $\Diamond$}
\def\cc{{\mathcal C}}
\def\cj{{\mathcal J}}
\def\CC{\mathbb{C}}
\def\NN{\mathbb{N}}
\def\QQ{\mathbb{Q}}
\def\RR{\mathbb{R}}
\def\ZZ{\mathbb{Z}}
\def\s{\smallskip}
\def\ni{\noindent}
\def\m{\medskip}
\def\eps{\epsilon}
\def\gve{\varepsilon}
\def\go{\omega}
\def\Vol{\operatorname {Vol\,}}
\def\cHZ{c_{\mbox{\tiny HZ}}}
\def\cG{c_{\mbox{\tiny G}}}
\def\span{\operatorname {span}}
\def\PD{\operatorname {PD}}
\def\pt{\operatorname {pt}}
\def\T{\operatorname {T}}
\def\can{{\operatorname {can}}}
\def\Hom{\operatorname {Hom}}
\def\s{\smallskip}
\def\m{\medskip}
\def\ni{\noindent
}
\newcommand{\Ima}{{\rm Im\,}}
\newcommand{\less}{{\smallsetminus}}
\newcommand{\al}{{\alpha}}
\newcommand{\om}{{\omega}}
\renewcommand{\eps}{{\varepsilon}}
\newcommand{\la}{{\lambda}}
\newcommand{\La}{{\Lambda}}
\newcommand{\si}{{\sigma}}
\newcommand{\ov}{\overline}
\newcommand{\id}{{\rm id}}
\renewcommand{\Tilde}{\widetilde}
\newcommand{\N}{{\mathbb N}}
\newcommand{\Q}{{\mathbb Q}}
\newcommand{\R}{{\mathbb R}}
\newcommand{\C}{{\mathbb C}}
\newcommand{\SL}{{\rm SL}}
\newcommand{\Z}{{\mathbb Z}}
\newcommand{\bx}{{\mathbf x}}
\newcommand{\by}{{\mathbf y}}
\newtheorem{theorem}{Theorem}[section]
\newtheorem{thm}[theorem]{Theorem}
\newtheorem{corollary}[theorem]{Corollary}
\newtheorem{cor}[theorem]{Corollary}
\newtheorem{lemma}[theorem]{Lemma}
\newtheorem{proposition}[theorem]{Proposition}
\newtheorem{prop}[theorem]{Proposition}
\newtheorem{definition}[theorem]{Definition}
\newtheorem{example}[theorem]{Example}
\newtheorem{remark}[theorem]{Remark}
\newtheorem{remarks}[theorem]{Remarks}
\numberwithin{equation}{section}
\newcommand{\MS}{{\medskip}}
\begin{document}

\title{The Gromov width of $4$-dimensional tori}
\author{Janko Latschev} 
\address{(J.~Latschev) 
Fachbereich Mathematik, Universit\"at Hamburg, Bundesstrasse~55, 20146 Hamburg, Germany} 
\email{janko.latschev@math.uni-hamburg.de}
\author{Dusa McDuff} \thanks{partially supported by NSF grant DMS 0905191.}
\address{(D.~McDuff)
Department of Mathematics, 
Barnard College, Columbia University, New York, 
NY 10027-6598, USA.}
\email{dmcduff@barnard.edu}
\author{Felix Schlenk} \thanks{partially supported by SNF grant 200021-125352/1.}
\address{(F.~Schlenk) 
Institut de Math\'ematiques,
Universit\'e de Neuch\^atel, 
Rue \'Emile Argand~11, 
CP~158,
2000 Neuch\^atel,
Switzerland} 
\email{schlenk@unine.ch}
\keywords{Gromov width, symplectic embeddings, symplectic packing, 
symplectic filling, tori}
\subjclass[2000]{Primary: 57R17, 57R40, Secondary: 32J27}
\date{\today}

\begin{abstract} 
Let $\om$ be any linear symplectic form on the $4$-torus $T^4$.  
We show that in all cases $(T^4,\om)$ 
can be fully filled by one symplectic ball.
If $(T^4,\om)$ is not symplectomorphic to a product
$T^2(\mu) \times T^2(\mu)$ of equal sized factors, then it
can also be fully filled by any finite collection of balls
provided only that their total volume is less than that of~$(T^4,\om)$.
\end{abstract}

\maketitle

\tableofcontents
\section{Introduction}

\ni
It has been known since Gromov's paper~\cite{Gr}
that symplectic embedding questions lie at the heart of
symplectic geometry.
For instance, Gromov's Nonsqueezing theorem implies that
for every natural number $k$,
there is no symplectically embedded ball in the product $S^2(k) \times S^2(1)$
of $2$-spheres of areas~$k$ and~$1$ that fills more than $\frac{1}{2k}$ of the volume.
In this paper we study symplectic embeddings of balls 
into $4$-dimensional tori with linear symplectic forms.
Our main result is that the only obstruction to symplectically embedding a 4-ball
into such a manifold is the total volume.

Consider the open ball of capacity~$a$,
$$
B^{2n}(a) \,=\, \Bigl\{ z \in \CC^n \;\Big|\; \pi \sum_{j=1}^n |z_j|^2 < a \Bigr\},
$$
in standard symplectic space $\left( \RR^{2n},\go_0 \right)$,
where $\go_0 = \sum_{j=1}^n dx_j \wedge dy_j$.
The {\it Gromov width} of a $2n$-dimensional symplectic manifold $(M,\go)$, introduced in~\cite{Gr}, 
is defined as 
\begin{equation} \label{def:Gwidth}
\cG( M,\omega) \,=\, 
\sup \left\{ a \mid B^{2n}(a) \mbox{ symplectically embeds into } (M,\omega) \right\}.
\end{equation}
Computations and estimates of the Gromov width 
for various examples can be found 
in~\cite{B1,B2,BC,Gr,Ji,KT,LM,LM1,Lu,M-variants,MP,MSl,Sch-book}.

If the symplectic manifold $(M,\go)$ has finite volume, 
an invariant equivalent to its Gromov width is the {\it ball filling number}
$$
p(M,\go) \,=\, \sup \frac{\Vol \bigl( B^{2n}(a) \bigr)}{\Vol (M,\go)}
$$ 
where the supremum is taken over all balls $B^{2n}(a)$ that symplectically embed into $(M,\go)$,
and where the volume is defined as $\frac{1}{n!} \int_M \go^n$.
Since $\Vol (B^{2n}(a)) = \frac{a^n}{n!}$, 
\begin{equation} \label{e:cGp}
p(M,\omega) \,=\, \frac{(c_G(M,\omega))^n}{n! \Vol (M,\omega)} .
\end{equation}

If $p(M,\go) <1$ one says that {\it there is a filling obstruction},
while if $p(M,\go) =1$ one says that $(M,\go)$ 
{\it admits a full filling by one ball}.\footnote{Our {\it ball filling number}\/ is called 
{\it first packing number}\/ by other authors, and {\it full fillings by one ball}\/ also go 
under the name of {\it full packings by one ball}. 
We refer to~\S\ref{s:quest} for a discussion of {\it full fillings}\/
versus {\it very full fillings}.}
In this paper our main focus is the filling number of $4$-tori with a linear symplectic form~$\om$, 
i.e.\ those which can be identified with the quotient of~$\R^4$, with its standard symplectic structure, 
by a suitable lattice~$\La$. 
We also study other related filling questions in which the ball is replaced by 
a disjoint union of balls. 

Filling obstructions usually come from non-constant holomorphic spheres.
In tori, however, there are no such spheres.
One can thus believe that for tori there should be no filling obstructions.
For the standard torus $T(1,1) := \RR^4/\ZZ^4$, 
there is the obvious lower bound $p(T(1,1)) \ge \frac 12$
coming from the inclusion of the ball $B^4(1)$ into the polydisc $B^2(1) \times B^2(1)$;
see also Figure~\ref{fig.dist} below.  
A better lower bound (namely $p(T(1,1)) \ge \frac 89$) comes from algebraic geometry,
see~\eqref{eq:sesh_bound_standard_4torus} in~\S\ref{ss:Seshadri}.
 We give an explicit realization of this embedding in Example~\ref{ex:2} below.

Our main result is

\begin{theorem} \label{t:1}
Every $4$-dimensional linear symplectic torus admits a full filling by one ball;
in other words, $p(T^4,\om) = 1$ for all linear $\om$.
\end{theorem}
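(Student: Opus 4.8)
The plan is to translate the theorem into a statement about symplectic forms on the one-point blow-up of $T^4$ and to settle that statement using what is known about the symplectic cone of a blown-up torus. First I would set up the standard correspondence between ball embeddings and symplectic blow-ups. Write $X:=T^4\#\ov{\CP}^2$, let $\pi\colon X\to T^4$ be the blow-down, and let $E\in H^2(X;\RR)$ be the Poincar\'e dual of the exceptional sphere, so that $E^2=-1$ and $E\cdot\pi^*\beta=0$ for all $\beta\in H^2(T^4;\RR)$. With the capacity normalisation of the excerpt, a symplectic embedding $B^4(a)\hookrightarrow(T^4,\om)$ produces a symplectic form on $X$ in the class $\pi^*[\om]-aE$; conversely, any symplectic form on $X$ in such a class for which $E$ is represented by an embedded symplectic sphere can be blown down to give an embedding $B^4(a)\hookrightarrow(T^4,\om')$ with $[\om']=[\om]$. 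Since $\Vol(B^4(a))=\tfrac{a^2}{2}$ and $2\Vol(T^4,\om)=[\om]^2$, formula~\eqref{e:cGp} shows that Theorem~\ref{t:1} is equivalent to $\cG(T^4,\om)=\sqrt{[\om]^2}$; and since the volume bound already gives $\cG(T^4,\om)^2\le 2\Vol(T^4,\om)=[\om]^2$, it suffices to construct, for every $a<\sqrt{[\om]^2}$, a symplectic form on $X$ in the class $\pi^*[\om]-aE$ whose exceptional class $E$ is symplectically represented. (Here one also has to know that the Gromov width of a linear torus depends only on the cohomology class of the form, so that $B^4(a)\hookrightarrow(T^4,\om')$ really gives $B^4(a)\hookrightarrow(T^4,\om)$; I would dispose of this either by a connectedness-and-Moser argument for symplectic forms in a fixed class on $T^4$, or by performing all constructions relative to a fixed model of $\om$.)

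Second, and this is the heart of the argument, I would produce these forms on $X$. Note that $\pi^*[\om]-aE$ has positive square exactly when $a<\sqrt{[\om]^2}$, always has positive pairing with $E$, and for small $a$ is the class of a genuine blow-up of $\om$, so it lands in the right component. The clean way to get all the remaining classes is to invoke the structure of the symplectic cone of $X$: since $T^4$ is a minimal symplectic $4$-manifold with $b^+=3>1$ and vanishing canonical class, its only Seiberg--Witten basic class is $0$, so by the blow-up formula the only basic classes of $X$ are $\pm E$, whence $E$ is the \emph{unique} class of an embedded symplectic sphere in $X$. The structure theorem for symplectic cones of $4$-manifolds with $b^+>1$ (Li--Liu, building on Taubes) then identifies the symplectic cone of $X$ with the set $\{\alpha\in H^2(X;\RR):\alpha^2>0,\ \alpha\cdot E>0\}$, which is connected because $b^+(X)\ge 2$; hence every class $\pi^*[\om]-aE$ with $0<a<\sqrt{[\om]^2}$ is realised by a symplectic form on $X$, and McDuff's result that exceptional classes of symplectic $4$-manifolds are represented by embedded symplectic spheres supplies the needed representative of $E$. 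Morally this whole step is a symplectic inflation: starting from a small Darboux ball one blows up and then inflates along embedded symplectic surfaces arising as proper transforms of symplectic surfaces in $T^4$ through the blow-up point, using that $T^4$ carries embedded symplectic surfaces of arbitrarily small $\om$-area in homology classes of positive self-intersection --- which is possible precisely because the intersection form of $T^4$ has indefinite signature $(3,3)$ --- so that the coefficient of $-E$ can be pushed arbitrarily close to $\sqrt{[\om]^2}$ while the total volume stays positive.

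Finally, blowing down these forms (which one may do since $E$ is an embedded symplectic sphere) yields $B^4(a)\hookrightarrow(T^4,\om)$ for every $a<\sqrt{[\om]^2}$, hence $\cG(T^4,\om)=\sqrt{[\om]^2}$ and $p(T^4,\om)=1$. I expect the main obstacle to be exactly the heart step: either correctly invoking the $b^+>1$ symplectic-cone theorem for the non-simply-connected, non-rational manifold $X$ --- in particular pinning down that its only exceptional class is $E$ --- or, in a self-contained treatment, constructing the required small-area embedded symplectic surfaces in $T^4$ and arranging the successive inflations so that they remain mutually compatible and the limiting cohomology class stays proportional to $[\om]$. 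A secondary point needing care, as noted above, is the invariance of the Gromov width of $(T^4,\om)$ under changing $\om$ within its cohomology class.
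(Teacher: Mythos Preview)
Your plan has a genuine gap at the heart step. The ``structure theorem for symplectic cones of $4$-manifolds with $b^+>1$'' that you attribute to Li--Liu does not exist in the generality you need: the Li--Liu result characterising the symplectic cone in terms of positive square and pairing with exceptional classes is specific to $b^+=1$, where wall-crossing for Seiberg--Witten invariants is available. For $b^+>1$ no such general existence statement is known, and indeed the paper presents the description of $\cc(T^4\#\ov{\CP}^2)$ (its Corollary~\ref{c:cone}) as a \emph{consequence} of Theorem~\ref{t:1}, proving the inclusion $\supset$ by first establishing the full filling and then blowing up. So your argument, as stated, is circular.

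Your fallback inflation sketch also breaks down precisely at the hardest case. For the standard torus $T(1,1)$ the form $\om$ is integral, so $\om$-areas of integral classes are integers bounded below by~$1$; there are no embedded symplectic surfaces of ``arbitrarily small $\om$-area'' to inflate along, and the usual inflation class $n(\PD\pi^*[\om]-a'E)$ cannot be shown to be represented by an embedded $J$-curve in the blown-up torus by any known argument. The paper circumvents this by a genuine case analysis: for irrational tori it finds a compatible complex structure with \emph{no} holomorphic curves and applies the Buchdahl--Lamari K\"ahler criterion on the blow-up; for rational tori $T(1,\mu)$ with $\mu\ge 2$ it transfers the problem to $T^2(1)\times S^2(\mu)$, where Biran's inflation does work, and arranges the ball to miss a section; finally, for $T(1,1)$ it gives an explicit embedding of $B^4(a)$ for every $a<\sqrt2$ via shears of a ``distorted diamond''. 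Your secondary worry is also real: it is not known that cohomologous symplectic forms on $T^4$ are isotopic, so even if you could produce a form on $T^4$ in class $[\om]$ admitting the ball, passing back to the given linear $\om$ requires extra control (in the paper this is obtained, case by case, from convexity of K\"ahler forms for a fixed $J$ or from an explicit Moser path built into the construction).
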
 


The symplectic (resp.\ K\"ahler) cone of a smooth 
oriented manifold~$X$ is the 
set of cohomology classes $\alpha \in H^2(X;\R)$ that can be represented 
by a symplectic (resp.\ K\"ahler) form,
where here we consider 
symplectic forms that are 
compatible with the given orientation on~$X$
(resp. K\"ahler forms that are compatible with any complex structure giving this orientation).
The symplectic cone~$\cc(T^4)$ of $T^4$ with a given orientation 
is $\{ \alpha \in H^2(T^4;\RR) \mid \alpha^2 > 0 \}$.
Each such class has a linear representative. 
From Theorem~\ref{t:1}, we get the following characterization of the symplectic cone of the 1-point blow up of a given oriented torus~$T^4$.

\begin{corollary} \label{c:cone}
Denoting by $E \in H_2(\Tilde X;\ZZ)$ the homology class of the exceptional divisor 
(with some orientation) in~$\Tilde X = T^4 \sharp \overline{\CC P^2}$, 
the symplectic cone of~$\Tilde X$ is
$$
\cc(\Tilde X) \,=\, 
\left\{ \alpha \in H^2(\Tilde X;\RR) \mid \alpha^2>0,\, \alpha(E) \ne 0 \right\} .
$$
\end{corollary}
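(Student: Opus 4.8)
The plan is to deduce Corollary~\ref{c:cone} from Theorem~\ref{t:1} together with the blow-up/blow-down correspondence between symplectic embeddings of balls and symplectic forms on the blow-up. Recall that a symplectic embedding of $B^4(a)$ into $(T^4,\om)$ with $[\om] = \alpha_0 \in H^2(T^4;\RR)$ produces, via the symplectic blow-up construction, a symplectic form on $\Tilde X = T^4 \sharp \overline{\CC P^2}$ in the class $\alpha_0 - a\,\PD(E)$ (after the usual identification of $H^2(\Tilde X)$ with $H^2(T^4) \oplus \RR\,\PD(E)$), and conversely, up to deformation, every symplectic form on $\Tilde X$ with $\alpha(E) \ne 0$ arises this way. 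Since $[\om]$ can be any class in $\cc(T^4)$ and, by Theorem~\ref{t:1}, $p(T^4,\om)=1$ means we may take $a$ arbitrarily close to the value $a_0$ with $a_0^2 = \alpha_0^2$ (so that $(\alpha_0 - a\,\PD(E))^2 = \alpha_0^2 - a^2 > 0$ for all $a < a_0$), we obtain symplectic forms on $\Tilde X$ in every class $\alpha$ with $\alpha^2 > 0$ and $\alpha(E)$ of one sign; reversing the orientation of $E$ handles the other sign.

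Concretely, I would proceed in the following steps. First, establish the inclusion $\cc(\Tilde X) \subseteq \{\alpha \mid \alpha^2 > 0,\ \alpha(E) \ne 0\}$: the condition $\alpha^2 > 0$ is forced because $\Tilde X$ is a closed oriented $4$-manifold and any symplectic form has positive self-intersection of its class; the condition $\alpha(E) \ne 0$ holds because $E$ is represented by an embedded sphere of square $-1$, and a symplectic form pairing to zero with such a class would violate the adjunction/light-cone constraints (equivalently, one can blow down and reduce the Betti number, but $T^4$ admits no such exceptional class since $H_2(T^4)$ has no $(-1)$-classes). Second, for the reverse inclusion, fix $\alpha \in H^2(\Tilde X;\RR)$ with $\alpha^2 > 0$ and $\alpha(E) < 0$; write $\alpha = \alpha_0 - a\,\PD(E)$ with $a = -\alpha(E) > 0$ and $\alpha_0$ the pullback of a class on $T^4$, note $\alpha_0^2 = \alpha^2 + a^2 > 0$ so $\alpha_0 \in \cc(T^4)$, pick a linear symplectic form $\om_0$ on $T^4$ with $[\om_0] = \alpha_0$, invoke Theorem~\ref{t:1} to embed $B^4(a')$ for any $a' < a_0 := \sqrt{\alpha_0^2}$ — in particular for some $a' \in (a_0 - \eps, a_0)$ — and then note $a < a_0$ (since $a^2 = \alpha_0^2 - \alpha^2 < \alpha_0^2$), so we can in fact embed $B^4(a)$ after rescaling, and blow up to get a symplectic form on $\Tilde X$ in class $\alpha$. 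Third, handle $\alpha(E) > 0$ by applying the previous step to the orientation-reversed exceptional class $-E$.

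The main subtlety I expect is matching the exact capacity: Theorem~\ref{t:1} gives $p = 1$ as a supremum, so strictly speaking one only embeds $B^4(a')$ for $a' < a_0$, not for $a' = a_0$. But this is harmless here, because the target class $\alpha$ has $a = -\alpha(E)$ with $a^2 = \alpha_0^2 - \alpha^2$ \emph{strictly} less than $\alpha_0^2$; hence $a < a_0$ and a ball of exactly capacity $a$ does embed (rescale $\om_0$ slightly, or directly use that the supremum is attained for any value strictly below it). A second point requiring a little care is the normalization of the blow-up: one must check that the class of the blown-up form is precisely $\alpha_0 - a\,\PD(E)$ and that it lies in the symplectic cone, which follows from the standard local model for the symplectic blow-up of a ball of capacity~$a$. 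Finally, one should remark that the cone so obtained is open and connected in each of the two half-spaces $\{\alpha(E) > 0\}$ and $\{\alpha(E) < 0\}$, consistent with the fact that the symplectic cone of a $4$-manifold is open, and that no class with $\alpha(E) = 0$ can be symplectic since blowing down would yield a symplectic form on $T^4$ in a class of the same square together with a vanishing cycle, contradicting the absence of embedded symplectic $(-1)$-spheres in a minimal manifold like $T^4$.
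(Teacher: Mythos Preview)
Your argument for the inclusion $\supset$ follows the paper's line closely and is essentially correct, modulo one point: the blow-up of a ball of capacity~$a$ produces a symplectic form whose value on the exceptional class has one definite sign (the sign determined by the complex orientation of the exceptional divisor in the local model). Simply ``reversing the orientation of~$E$'' does not manufacture a symplectic form in the class $\pi^*\alpha_0 + a\,\PD(E)$; for that one needs an orientation-preserving self-diffeomorphism of $\Tilde X$ acting on $H^2$ by $e\mapsto -e$ and fixing $\pi^*H^2(T^4)$. The paper supplies such a diffeomorphism explicitly (built from complex conjugation on $\overline{\CC P^2}$, isotoped to be the identity near the connect-sum region). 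You should invoke this rather than a change of orientation of a homology class.

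There is, however, a genuine gap in your treatment of the inclusion $\subset$, namely the assertion that $\alpha(E)\ne 0$ for every symplectic class~$\alpha$. Your two suggested justifications do not stand on their own. Adjunction constrains classes represented by \emph{symplectic} (or $J$-holomorphic) surfaces, but for an arbitrary symplectic form $\om$ on $\Tilde X$ you have no a~priori reason that $E$ is so represented; likewise the ``blow down'' argument presupposes a symplectic $(-1)$-sphere in class~$E$, which is exactly what is in question (and if $\alpha(E)=0$ there is certainly no such sphere to blow down). The paper closes this gap by appealing to Taubes' theorem relating Seiberg--Witten and Gromov invariants: for any symplectic form on $\Tilde X$ the class $E$ is represented by an embedded sphere on which the form is nondegenerate, forcing $\alpha(E)\ne 0$. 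This is a deep input that your proposal is missing; without it (or an equivalent result) the necessity of $\alpha(E)\ne 0$ is not established.
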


While there are many examples of non-K\"ahler symplectic manifolds, 
it is much harder to find K\"ahler manifolds 
for which the K\"ahler and symplectic cones differ.  
Some examples are given by Dr\u aghici~\cite{Dra} and Li--Usher~\cite{Li-Usher}.
More recently, Cascini--Panov~\cite{CP} showed that the K\"ahler and symplectic cones differ for the one point blow-up of $T^2 \times S^2$. 
With the help of Corollary~\ref{c:cone} we obtain 
another simple example.

\begin{corollary} \label{c:1}
Let $\Tilde X$ be the blow-up $T^4 \sharp \overline{\C P^2}$ of the $4$-torus
in one point. 
Then the symplectic cone of~$\Tilde X$ is strictly bigger than the K\"ahler cone.
\end{corollary}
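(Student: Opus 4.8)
The plan is to combine Corollary~\ref{c:cone} with a description of the K\"ahler cone of $\Tilde X = T^4 \sharp \overline{\C P^2}$ and then exhibit a class that lies in $\cc(\Tilde X)$ but not in the K\"ahler cone. First I would recall that a K\"ahler form on $\Tilde X$ is the blow-up of a K\"ahler form on a complex torus $X_0 = \C^2/\La$ at a point, so any K\"ahler class $\kappa$ on $\Tilde X$ has the form $\kappa = \pi^*\alpha - \lambda\,\PD(E)$ with $\lambda>0$ and $\alpha$ a K\"ahler class on some complex torus structure on $T^4$ — in particular $\alpha^2>0$ and $\alpha(E)=0$. The key point is that for a complex torus there is a rigidity constraint: the K\"ahler classes on a $2$-torus $X_0$ (for a fixed complex structure $J$) form the positive cone inside the $(1,1)$-part $H^{1,1}_J \cap H^2(X_0;\R)$, which for a generic $J$ is only a $1$-dimensional (or otherwise proper) subspace of $H^2(T^4;\R)$, whereas the full symplectic cone of $T^4$ is the whole open set $\{\alpha^2>0\}$. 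Taking the union over all complex structures $J$ on $T^4$ still does not exhaust $\{\alpha^2>0\}$: there are cohomology classes $\alpha$ with $\alpha^2>0$ that are of type $(1,1)$ for no complex structure on $T^4$, or more simply, classes that are symplectic but whose underlying complex torus has restricted Hodge structure.

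Concretely, I would argue as follows. Choose an orientation and pick a class $\alpha \in H^2(T^4;\R)$ with $\alpha^2>0$ that is \emph{not} a K\"ahler class for any integrable complex structure on $T^4$ compatible with the orientation; such classes exist because for any given $J$ the K\"ahler classes lie in the proper subspace $H^{1,1}_J\cap H^2$, and — using that the space of complex structures together with the requirement that a fixed $\alpha$ be of type $(1,1)$ cuts out a positive-codimension condition — a generic $\alpha$ with $\alpha^2>0$ avoids all of them. Then for small $\lambda>0$ the class $\tilde\alpha := \pi^*\alpha - \lambda\,\PD(E) \in H^2(\Tilde X;\R)$ satisfies $\tilde\alpha^2 = \alpha^2 - \lambda^2 > 0$ and $\tilde\alpha(E) = \lambda \ne 0$, so by Corollary~\ref{c:cone} it lies in $\cc(\Tilde X)$. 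On the other hand, $\tilde\alpha$ cannot be K\"ahler: a K\"ahler form on $\Tilde X$ would, by blowing down the exceptional curve, induce a K\"ahler form (hence a K\"ahler class of type $(1,1)$) on a complex torus $X_0$ whose class is $\alpha$, contradicting the choice of $\alpha$. Hence $\cc(\Tilde X)$ strictly contains the K\"ahler cone.

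The main obstacle is making precise the genericity statement that a class $\alpha$ with $\alpha^2>0$ need not be K\"ahler for \emph{any} compatible complex structure on $T^4$ — one must check this is not vacuous, i.e.\ that the union over all $J$ of the K\"ahler cones really is a proper subset of $\{\alpha^2>0\}$. I would handle this by a dimension count: complex structures on $T^4$ compatible with the orientation form a finite-dimensional family, and for each the K\"ahler classes with $\alpha^2>0$ form (at most) a full-dimensional cone inside $H^{1,1}_J\cap H^2$, which is a proper linear subspace of $H^2(T^4;\R)\cong\R^6$ for generic $J$; fibering this over the moduli of $J$ still lands in a subset of $\{\alpha^2>0\}$ of positive codimension or at least a proper closed subset, so its complement inside the (open, connected, full-dimensional) set $\{\alpha^2>0\}$ is nonempty. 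Once this is in place, the rest is a one-line application of Corollary~\ref{c:cone} together with the standard blow-down correspondence for K\"ahler forms. It would also be worth remarking that this recovers, by a different route, the phenomenon observed by Cascini--Panov~\cite{CP} and fits the pattern of Dr\u aghici~\cite{Dra} and Li--Usher~\cite{Li-Usher}.
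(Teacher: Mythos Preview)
Your proposal contains a genuine gap: the class $\alpha$ you are looking for does not exist. Every class $\alpha \in H^2(T^4;\R)$ with $\alpha^2>0$ \emph{is} a K\"ahler class for some complex structure on $T^4$. Indeed, any such $\alpha$ is represented by a linear symplectic form~$\omega$, and any constant $\omega$-compatible almost complex structure~$J$ on $\R^4$ is automatically integrable (it has constant coefficients), so $(T^4,\omega,J)$ is K\"ahler with K\"ahler class~$\alpha$. Your dimension count is too optimistic: for each $J$ the real $(1,1)$-space has dimension~$4$ inside $H^2(T^4;\R)\cong\R^6$, but the family of linear complex structures is large enough that the union of these subspaces sweeps out the whole positive cone. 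Thus the step ``choose $\alpha$ with $\alpha^2>0$ that is not K\"ahler for any $J$'' cannot be carried out, and the argument collapses.

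The paper's proof locates the obstruction on the blow-up rather than on $T^4$. One fixes $[\omega]$ to be the class of the \emph{standard} torus $T(1,1)$ and lets the blow-up weight~$a$ vary. By Corollary~\ref{c:cone} (equivalently Theorem~\ref{t:1}), the class $\pi^*[\omega]-a\,\PD(E)$ is symplectic for every $0<a<\sqrt 2$. For the K\"ahler side, one uses the Enriques--Kodaira classification to see that any complex structure on $\Tilde X$ is the blow-up of a complex torus $(T,J)$; positivity of the K\"ahler form on holomorphic curves then forces $[\omega]$ to be an ample (hence principally polarizing) class on~$T$, and comparing with Steffens' bound $\eps(A^2,\Theta)\le \tfrac43$ from~\eqref{eq:seshadri_standard_4torus} gives $a\le \tfrac43$. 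So any $a\in(\tfrac43,\sqrt 2)$ yields a class that is symplectic but not K\"ahler. The essential input you are missing is this Seshadri-constant upper bound, which is exactly what separates the K\"ahler and symplectic cones on the blow-up.
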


\m
\bigskip \ni
{\bf Outline of the proof of Theorem~\ref{t:1}}

\m
\ni
As we will see, for our purposes linear $4$-tori divide
into three classes: 
the {\it standard torus} $T(1,1)$ 
(and its rescalings), 
{\it all other rational tori} (in which $[\om]$ is a multiple of a rational class), 
and {\it irrational tori} (in which the image of the homomorphism 
$\int \om \colon H_2(M;\Z) \to \R$ has rank at least~$2$
over $\Q$).

It turns out that every rational torus is 
(up to scaling)
symplectomorphic to a product torus $T^2(d_1) \times T^2(d_2)$,
where $d_1,d_2 \in \NN$ denote the areas of the two factors; see Lemma \ref{lem:type}. 
Thus the family of product tori $T(1,\mu): = T^2(1) \times T^2(\mu)$
with $\mu \ge 1$ 
contains all rational tori,
up to scaling. 
With this in mind, our proof proceeds as follows.

\medskip \ni
{\it 1. Linear algebra.}
By a simple symplectic linear algebra argument, the tori $T(m,n)$ and $T(1,mn)$
are symplectomorphic for relatively prime integers $m,n$ (see Remark~\ref{r:normalform}).
Hence:

\begin{lemma} \label{l:linalg}
$p \bigl( T(\frac mn,1) \bigr) = p\bigl(T(1,m n)\bigr)$ for $m,n \in \NN$ relatively prime.
\end{lemma}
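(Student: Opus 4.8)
The plan is to deduce the identity from two facts: that the ball filling number is a symplectomorphism invariant which is moreover insensitive to global rescaling of the symplectic form, and that the tori $T(m,n)$ and $T(1,mn)$ are symplectomorphic when $\gcd(m,n)=1$ (the content of Remark~\ref{r:normalform}, which I take as given).

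First I would record the rescaling invariance. For any symplectic manifold $(M,\om)$ and any $\la>0$, the ball $B^{2n}(a)$ embeds symplectically into $(M,\om)$ precisely when $B^{2n}(\la a)$ embeds symplectically into $(M,\la\om)$; since $\Vol(M,\la\om)=\la^n\Vol(M,\om)$ and $\Vol(B^{2n}(\la a))=\la^n\Vol(B^{2n}(a))$, the supremum defining $p$ is unchanged, so $p(M,\la\om)=p(M,\om)$. Applying this with $\la=n$ to $T(\tfrac mn,1)=T^2(\tfrac mn)\times T^2(1)$ --- where multiplying the product form by $n$ multiplies the area of each factor by $n$ --- gives
\[
p\bigl(T(\tfrac mn,1)\bigr)\;=\;p\bigl(T^2(m)\times T^2(n)\bigr)\;=\;p\bigl(T(m,n)\bigr).
\]

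Next I would invoke Remark~\ref{r:normalform}: since $m$ and $n$ are relatively prime, $T(m,n)$ is symplectomorphic to $T(1,mn)$. As $p$ is defined purely through symplectic embeddings of balls and the total volume, both of which are symplectomorphism invariants, it depends only on the symplectomorphism type, so $p(T(m,n))=p(T(1,mn))$. Combined with the display above this proves the lemma.

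The only content beyond bookkeeping is the symplectomorphism $T(m,n)\cong T(1,mn)$, so that is where the real work sits. To prove it one writes $T(m,n)=\R^4/(D\Z^2\oplus\Z^2)$ in Darboux coordinates that split $\R^4=\R^2\oplus\R^2$ as a dual pair of Lagrangian subspaces, with $D=\diag(m,n)$; a linear symplectomorphism of the block form $A\oplus(A^T)^{-1}$ with $A\in\mathrm{GL}(2,\Z)$ carries this lattice to $\R^4/(AD\Z^2\oplus\Z^2)$. By the Smith normal form there is such an $A$ with $AD\Z^2=\diag(1,mn)\Z^2$, the elementary divisors of $\diag(m,n)$ being $1$ and $mn$ precisely because $\gcd(m,n)=1$; this identifies the image lattice with the one defining $T(1,mn)$. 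This linear algebra step is the genuine obstacle, everything else being formal.
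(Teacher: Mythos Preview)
Your argument is correct and follows exactly the route the paper indicates: the paper derives the lemma from the symplectomorphism $T(m,n)\cong T(1,mn)$ of Remark~\ref{r:normalform}, and you simply make explicit the intermediate rescaling step $p\bigl(T(\tfrac mn,1)\bigr)=p\bigl(T(m,n)\bigr)$ that the paper leaves implicit. Your Smith normal form sketch of the symplectomorphism is also in the spirit of the paper's Lemma~\ref{lem:type}.
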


\ni
{\it 2. Algebraic geometry.}
Buchdahl~\cite{Buchdahl} and Lamari~\cite{Lamari}
found a condition on a cohomology class $\alpha \in H^{1,1}(X;\R)$ on some complex surface~$X$ that guarantees the existence of a K\"ahler representative of~$\alpha$.
We shall verify this condition on blow-ups of irrational tori to obtain:

\begin{proposition} \label{p:irrat}
$p(T^4,\om)=1$ for all irrational linear tori $(T^4,\om)$.
\end{proposition}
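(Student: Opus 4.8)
\textit{Strategy.} The plan is to translate the embedding problem, via the symplectic blow--up correspondence, into a statement about K\"ahler classes on a blow--up of the torus, and then apply the Buchdahl--Lamari criterion. Recall (McDuff; McDuff--Polterovich \cite{MP}) that for a closed symplectic $4$--manifold $(M,\om)$ and $a>0$ one has $B^4(a)\se(M,\om)$ if and only if the one--point blow--up $\Tilde X=M\sharp\overline{\CC P^2}$ carries a symplectic form of blow--up type in the cohomology class $\pi^*[\om]-a\,\PD(E)$, where $\pi\colon\Tilde X\to M$ is the blow--down and $E$ the exceptional sphere, so that $\PD(E)^2=-1$. Since $\Vol(B^4(a))=\tfrac12 a^2$ and $\Vol(T^4,\om)=\tfrac12[\om]^2$, proving $p(T^4,\om)=1$ reduces to showing that, for a suitable blow--up of $(T^4,\om)$, the class $\alpha_a:=\pi^*[\om]-a\,\PD(E)$ is K\"ahler for every $a$ with $0<a^2<[\om]^2$.

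\textit{Choice of complex structure.} This is where linearity of $\om$ enters. Being linear, $\om$ admits a linear (constant) $\om$--compatible almost complex structure $J$ on $T^4$; such a $J$ is automatically integrable, so $X_J:=(T^4,J)$ is a complex $2$--torus on which $\om$ is a K\"ahler form. I would first choose $J$ \emph{generically}. The set $\Jj_\om$ of linear $\om$--compatible complex structures is a positive--dimensional complex manifold (the Siegel upper half--space of genus two), and for each nonzero $\beta\in H^2(T^4;\ZZ)$ the locus $\{J\in\Jj_\om : \beta\in H^{1,1}(X_J)\}$ is a closed analytic subset of $\Jj_\om$; a short linear--algebra computation with the cup product (the decomposable holomorphic $(2,0)$--forms of the various $X_J$ span the cup--product orthogonal complement of $[\om]$) shows this locus is a \emph{proper} subset unless $\beta\in\RR\,[\om]$, which is impossible since $(T^4,\om)$ is irrational. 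By Baire's theorem I may therefore pick $J\in\Jj_\om$ with $\mathrm{NS}(X_J)=\{0\}$; then $X_J$ contains no complex curve. Its blow--up $\Tilde X:=X_J\sharp\overline{\CC P^2}$ is again K\"ahler, has $\mathrm{NS}(\Tilde X)=\ZZ\,\PD(E)$, and, since $K_{\Tilde X}=\PD(E)$, an adjunction computation shows that $E$ is the \emph{only} irreducible curve in $\Tilde X$.

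\textit{Applying Buchdahl--Lamari.} Now I would invoke the theorem of Buchdahl \cite{Buchdahl} and Lamari \cite{Lamari}: on a compact K\"ahler surface a real $(1,1)$--class is a K\"ahler class as soon as it has positive square, pairs positively with every irreducible curve, and lies in the connected component of $\{u\in H^{1,1}(\Tilde X;\RR) : u^2>0\}$ containing the K\"ahler cone. For $\alpha_a=\pi^*[\om]-a\,\PD(E)$ all three conditions are immediate: $\alpha_a^2=[\om]^2-a^2>0$ for $a<\sqrt{[\om]^2}$; the only irreducible curve is $E$ and $\alpha_a\cdot E=a>0$; and for small $a>0$ the class $\alpha_a$ is represented by the K\"ahler form produced by the standard K\"ahler blow--up of $(X_J,\om)$ with small exceptional area, so, the segment $a\mapsto\alpha_a$ staying inside $\{u^2>0\}$ for $0\le a<\sqrt{[\om]^2}$, the whole segment lies in the K\"ahler component. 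Hence $\alpha_a$ is K\"ahler for every such $a$; by the correspondence above $B^4(a)\se(T^4,\om)$ for every $a<\sqrt{[\om]^2}$, and therefore $p(T^4,\om)=1$.

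\textit{Main obstacle.} The crux is the genericity step: selecting a complex structure that is \emph{simultaneously} $\om$--compatible and has trivial N\'eron--Severi group. This is exactly where irrationality is used, and it rests on identifying precisely when an integral class can be forced into $H^{1,1}$ by $\om$--compatibility alone (only when it is a real multiple of $[\om]$). A secondary point requiring care is verifying that the abstract K\"ahler form produced on $\Tilde X$ is of the correct blow--up type, so that it genuinely yields a symplectically embedded ball in $(T^4,\om)$ rather than merely a symplectic form in the right cohomology class; for a K\"ahler form on a genuine complex blow--up this is expected, but it should be argued rather than assumed.
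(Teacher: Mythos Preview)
Your approach is essentially the same as the paper's: choose an $\om$-compatible linear complex structure $J$ on $T^4$ with trivial N\'eron--Severi group, blow up at a point, apply the Buchdahl--Lamari criterion to the class $\pi^*[\om]-a\,\PD(E)$, and blow down. The only substantive difference is in how the curve--free $J$ is produced. The paper does this by an explicit construction: it writes down lattices $\La_P\subset\C^2$ depending on four real parameters $p,q,r,s$, shows directly that $H^{1,1}\cap H^2(T;\Z)=0$ when these parameters are rationally independent with $ps-qr$ irrational, and then proves by an elementary (but somewhat lengthy) linear-algebra argument that every irrational $(T^4,\om)$ is symplectomorphic to such a $(\C^2/\La_P,\om_0)$. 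Your Baire-category argument is a legitimate and more conceptual alternative; the claim that the decomposable $(2,0)$-forms of the various $X_J$ span $[\om]^\perp$ is correct and follows, for instance, from the irreducibility of the primitive part of $\Lambda^2(\R^4)^*$ under $Sp(4,\R)$, but you should spell this out rather than leave it as a parenthetical.

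The point you flag as ``secondary'' is real, and it is precisely where the paper does nontrivial work. The blow-up/blow-down correspondence you invoke only yields a symplectic ball in $(T^4,\om')$ for \emph{some} symplectic form $\om'$ cohomologous to $\om$; one must still show $\om'$ and $\om$ are isotopic. The paper handles this by constructing a smooth family of $\Tilde J$-tame symplectic forms $\tau_s$ on the blow-up, in the classes $\pi^*[\om]-sa\,\PD(E)$ for $s\in(0,1]$, which interpolate between the Buchdahl--Lamari K\"ahler form at $s=1$ and the standard small K\"ahler blow-up for $s$ near $0$ (using convexity of the set of $\Tilde J$-tame forms in a fixed cohomology class to make the family smooth). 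Blowing down this family produces a path of cohomologous symplectic forms on $T^4$ connecting $\om'$ to $\om$, and Moser's argument finishes. Your proof sketch becomes complete once you supply this isotopy argument; as written, it correctly identifies the gap but does not close it.
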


\ni
{\it 3. Full fillings of $T^2(1) \times S^2(\mu)$.}
Denote by $S^2(\mu)$ the $2$-sphere endowed with an area form of area~$\mu$.
Biran~\cite{B1} proved that $T^2(1) \times S^2(\mu)$ 
can be fully packed by one ball
provided that $\mu \ge 2$.
We shall show that such an almost filling ball can be made to lie in the complement of
a constant section $T^2(1) \times \pt$. 
Since the open disc bundle 
$T^2(1) \times D^2(\mu) = \bigl( T^2(1) \times S^2(\mu) \bigr) \less \bigl( T^2(1) \times \pt \bigr)$ symplectically embeds into~$T^2(1) \times T^2(\mu) = T(1,\mu)$, 
we obtain

\begin{proposition} \label{p:muge2}
$p\bigl(T(\mu,1)\bigr) =1$ for all~$\mu \ge 2$.
\end{proposition}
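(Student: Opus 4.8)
The plan is to deduce Proposition~\ref{p:muge2} from the combination of Biran's full packing result for $T^2(1)\times S^2(\mu)$ and a careful analysis of \emph{where} the packing ball sits. The key reduction is the embedding
$$
T^2(1)\times D^2(\mu)\;=\;\bigl(T^2(1)\times S^2(\mu)\bigr)\less\bigl(T^2(1)\times\pt\bigr)\;\hookrightarrow\;T^2(1)\times T^2(\mu)\;=\;T(1,\mu),
$$
which is symplectic because an open disc of area $\mu$ symplectically embeds into a torus of area $\mu$ (send the disc to the complement of a meridian). So if, for every $a<\mu$, one can find a symplectic embedding $B^4(a)\hookrightarrow T^2(1)\times S^2(\mu)$ whose image misses some section $T^2(1)\times\{\pt\}$, then composing with the above inclusion gives $B^4(a)\hookrightarrow T(1,\mu)$, and since $\Vol\bigl(B^4(a)\bigr)=a^2/2\to\mu=\Vol\bigl(T(1,\mu)\bigr)$ as $a\to\mu$, we conclude $p\bigl(T(\mu,1)\bigr)=1$ (the upper bound $p\le 1$ being automatic).

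First I would recall precisely what Biran proves in~\cite{B1}: for $\mu\ge 2$ the manifold $T^2(1)\times S^2(\mu)$ admits a full packing by one ball, i.e.\ $p\bigl(T^2(1)\times S^2(\mu)\bigr)=1$, so for each $a<\mu$ there is an embedded $B^4(a)$. Next I would address the displacement of this ball off a section. The section $T^2(1)\times\{\pt\}$ is a Lagrangian torus $L$ in $T^2(1)\times S^2(\mu)$ with trivial normal bundle; a Weinstein neighbourhood of $L$ is symplectomorphic to $T^*\!T^2\times(-\eps,\eps)$-type data, and in particular $L$ sits inside a "thin" symplectic tube of small volume. Given any finite-volume ball $B^4(a)$ with $a<\mu$, there is a small $\delta>0$ with $a<\mu-\delta$; one would like to compress the packing into the complement of a $\delta$-neighbourhood of some section. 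Two routes are available: (i) use that Biran's construction (via the decomposition $T^2\times S^2$ minus a Lagrangian skeleton, or via the Lalonde--McDuff inflation/symplectic blow-up machinery) can be arranged to keep the ball away from a chosen fibre or section — indeed the packing ball in Biran's argument is built near the "$D^2$-bundle part" and is naturally disjoint from the divisor $T^2\times\pt$; or (ii) a soft isotopy argument: any embedded ball of capacity $a<\mu$ can be pushed off a given symplectic (or Lagrangian) submanifold of complementary-type after shrinking slightly, because the ball occupies strictly less than the full volume and one can flow it along a Hamiltonian isotopy supported near the submanifold. I would spell out route (i), since it is more robust: rather than starting from an abstract full packing, I would re-examine Biran's explicit model, in which $T^2(1)\times S^2(\mu)$ is cut along $T^2(1)\times\pt$ into the disc bundle $T^2(1)\times D^2(\mu)$, show that for $\mu\ge 2$ this disc bundle alone already contains a ball $B^4(a)$ for every $a<\mu$, and then note that the disc bundle embeds into $T(1,\mu)$ as above.

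Concretely, the heart of the matter is therefore: \emph{for $\mu\ge 2$ and any $a<\mu$, the symplectic disc bundle $T^2(1)\times D^2(\mu)$ contains an embedded $B^4(a)$.} This is essentially Biran's statement read in the complement of the divisor; the point $\mu\ge 2$ is exactly the threshold at which his packing obstruction (coming from the relevant exceptional class / Seiberg--Witten--Taubes or pseudoholomorphic curve count on the blow-up of $T^2\times S^2$) vanishes. I would invoke Biran's decomposition theorem for $T^2\times S^2$: the complement of a suitable symplectic "real hypersurface" deformation retracts onto the isotropic skeleton $\Delta_\mu$, and the rest of the manifold is a disc bundle which in the limit $\mu\to\infty$ carries arbitrarily large balls; the quantitative bound $p=1$ for all $\mu\ge 2$ is the content of~\cite[Theorem]{B1}. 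Combining with Lemma~\ref{l:linalg}-type scaling and the inclusion $T^2\times D^2\hookrightarrow T^2\times T^2$ finishes the proof.

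The main obstacle I anticipate is step (i): verifying that the almost-filling ball can genuinely be taken \emph{disjoint from a section} $T^2(1)\times\pt$, not merely disjoint from a point or from a fibre. One has to track the geometry of Biran's construction carefully — the ball is produced from a symplectic blow-up of size close to $\mu$ in $T^2(1)\times S^2(\mu)$, and one must check that the exceptional divisor (equivalently, the ball) can be placed in the region $T^2(1)\times(S^2(\mu)\less\{\pt\})$. This should follow because in the blow-up the class being represented by a symplectic form for the relevant area vector is "interior" to the fibre direction, so a small perturbation of the blow-up locus keeps it off the chosen section; but making this precise — ideally by showing Biran's ball already lives in the disc-bundle chart by construction — is where the real work lies, and it is the point the subsequent proof will need to nail down.
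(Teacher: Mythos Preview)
Your overall strategy matches the paper's: use Biran's full packing of $T^2(1)\times S^2(\mu)$ for $\mu\ge 2$, arrange the ball to miss a section $Z=T^2(1)\times\{z_0\}$, then embed the complementary disc bundle into $T(1,\mu)$. You also correctly identify the crux, namely keeping the ball off~$Z$. However, there are real problems in how you propose to handle that step.

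First, a factual error: the section $T^2(1)\times\{\pt\}$ is \emph{not} Lagrangian in $T^2(1)\times S^2(\mu)$ with the product form; it is a symplectic torus of area~$1$. So the Weinstein-neighbourhood picture and the ``thin tube of small volume'' heuristic do not apply, and your route~(ii) collapses. More generally, there is no soft Hamiltonian argument that pushes a nearly-full ball off a fixed symplectic surface; the ball may be forced to intersect it. Your route~(i) is in the right spirit but, as you acknowledge, is not a proof: Biran's packing ball is produced abstractly via blow-up/inflation and there is no a priori reason it sits in a disc-bundle chart. The references to an ``isotropic skeleton'' and ``real hypersurface'' conflate Biran's later Lagrangian-barrier decompositions with the earlier inflation argument actually used here.

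What the paper does for this step is different and more robust: it \emph{reruns} the inflation construction on the $k$-fold blow-up $M_k$ of $T^2\times S^2$ while restricting throughout to almost complex structures~$J$ that are standard near~$\Tilde Z=\pi^{-1}(Z)$ (and near the exceptional divisors). One then needs a connected embedded $J$-holomorphic curve in the inflation class $A=n\bigl(\PD[\pi^*\om_0]-\sum a_j'E_j\bigr)$ for some such~$J$. The new difficulty is that cusp degenerations may now contain components in $[\Tilde Z]$; the paper rules these out by the dimension count
\[
k_{\text{cusp}}(A)\;\le\;k\bigl(A-m[\Tilde Z]\bigr)\;=\;k(A)-m\,A\cdot[\Tilde Z]\;<\;k(A),
\]
using $[\Tilde Z]^2=0$, $c_1([\Tilde Z])=0$ and $A\cdot[\Tilde Z]>0$. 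Inflating along the resulting curve produces a family~$\tau_s$ of forms on~$M_k$ that are all nondegenerate on~$\Tilde Z$; blowing down and running Moser relative to~$Z$ then gives balls in $(T^2\times S^2)\less Z$. This is the mechanism you are missing: one does not displace a pre-existing ball, one builds the forms so that $Z$ stays symplectic at every stage.
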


\begin{corollary} \label{c:mu}
$p\bigl(T(\mu,1)\bigr) =1$ for all~$\mu \ne 1$.
\end{corollary}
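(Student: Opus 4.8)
The plan is to deduce Corollary~\ref{c:mu} from Lemma~\ref{l:linalg} together with Propositions~\ref{p:irrat} and~\ref{p:muge2} by a short case analysis on the number~$\mu$. First I would record two elementary reductions. Since $T(\mu,1)$ and $T(1,\mu)$ denote the same product torus $T^2(1)\times T^2(\mu)$, and since rescaling the symplectic form by the factor $1/\mu$ turns $T(1,\mu)$ into $T(1,1/\mu)$ while leaving the ball filling number unchanged (by~\eqref{e:cGp}, as $\cG$ and $\Vol$ are homogeneous of degrees $1$ and $n$ under rescaling, the ratio $p$ is invariant), it suffices to prove $p(T(1,\mu))=1$ for $\mu>1$.

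Next I would treat the case $\mu\ge 2$ and the case of irrational~$\mu$. If $\mu\ge 2$, the statement is exactly Proposition~\ref{p:muge2}. If $\mu$ is irrational, then the image of $\int\om\colon H_2\bigl(T(1,\mu);\Z\bigr)\to\R$ contains both $1$ and~$\mu$, hence has rank at least~$2$ over~$\Q$, so $(T(1,\mu),\om)$ is an irrational torus in the sense of the introduction and Proposition~\ref{p:irrat} applies.

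The only remaining case is that $\mu=m/n$ is rational with $1<\mu<2$. Writing this fraction in lowest terms, so that $\gcd(m,n)=1$, the strict inequalities force $n\ge 2$ and $m>n$, in particular $mn\ge 2$. Here I would invoke Lemma~\ref{l:linalg}, which gives $p\bigl(T(\tfrac mn,1)\bigr)=p\bigl(T(1,mn)\bigr)$; since $mn\ge 2$, Proposition~\ref{p:muge2} shows the right-hand side equals~$1$, and as $T(\mu,1)=T(\tfrac mn,1)$ this finishes the argument.

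I do not expect a genuine obstacle: the corollary is essentially a bookkeeping consequence of the three earlier results. The only point requiring any care is the rational window $1<\mu<2$, where the packing input behind Proposition~\ref{p:muge2} --- Biran's full packing of $T^2(1)\times S^2(\mu)$ for $\mu\ge 2$ --- is not directly available for the ``thin'' torus $T(\mu,1)$. The resolution is precisely the symplectic normal form behind Lemma~\ref{l:linalg}: it trades the thin torus for the ``fat'' torus $T(1,mn)$ of the same volume, which does lie in the range covered by Proposition~\ref{p:muge2}.
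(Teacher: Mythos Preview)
Your proof is correct and follows essentially the same route as the paper: reduce to $\mu>1$, dispose of $\mu\ge 2$ and irrational~$\mu$ via Propositions~\ref{p:muge2} and~\ref{p:irrat}, and for rational $\mu=\tfrac mn\in(1,2)$ invoke Lemma~\ref{l:linalg} to pass to $T(1,mn)$ with $mn\ge 2$. The paper records the sharper (but unneeded) bound $mn\ge 6$, and leaves the reduction to $\mu>1$ implicit; otherwise the arguments coincide.
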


\begin{proof}
In view of Propositions~\ref{p:irrat} and \ref{p:muge2} we 
need only consider $\mu \in (1,2) \cap \QQ$.
If we write $\mu = \frac mn$ with $m,n \in \NN$ relatively prime,
then 
$m>n \ge 2$ , giving $mn \ge 6$.
Hence Lemma~\ref{l:linalg} and Proposition~\ref{p:muge2} imply
$p\bigl(T(\mu,1)\bigr) = p\bigl(T(1,mn)\bigr) =1$.
\end{proof}

\medskip \ni
{\it 4. A symplectic embedding construction.}
The only case not covered by the above discussion is the standard product torus $T(1,1)$. 
To prove $p(T(1,1)) = 1$ we shall construct for each ball $B^4(a)$ of volume $\frac{a^2}2 < 1$ 
an explicit symplectic embedding into~$T(1,1)$.
Fix $a<\sqrt 2$.
We start with an almost full embedding $B^4(a) \to \Diamond \times \Box$,
where $\Diamond \subset \RR^2(x_1,x_2)$ is a diamond-shaped domain
(see Figure~\ref{fig.dist}~(I) below), 
and $\Box = (0,1)^2 \subset \RR^2(y_1,y_2)$.
The main step is then to construct a symplectic embedding 
$\Diamond \times \Box \to \RR^4$ with image~$U$
such that the projection $\RR^4 \to T(1,1)=\RR^4/\ZZ^4$ is injective on~$U$.
%

\smallskip
The resulting embedding $B^4(a) \to T(1,1)$ uses all four homological directions of~$T(1,1)$.
This must be so.
Indeed, assume that there exists an embedding $B^4(a) \to T(1,1)$ that factors, 
for instance, as
$$
B^4(a) \,\stackrel{\psi}{\longrightarrow}\, T^3(x_1,y_1,x_2) \times (0,1) \,\to\, T(1,1)
$$
with $(0,1) \subset \RR(y_2)$.
It is easy to see that there exists a symplectic embedding~$\rho$ of the annulus 
$T^1(x_2) \times (0,1)$ into $B^2(1)\subset \RR^2(x_2,y_2)$.
Composing $\psi$ with $\id \times \rho$ we obtain a symplectic embedding of $B^4(a)$ into 
$T^2(x_1,y_1) \times B^2(1)$, which lifts to $\RR^2(x_1,y_1) \times B^2(1)$.
Hence $a \le 1$ by the Nonsqueezing Theorem.
A similar discussion applies to all sufficiently large balls in product tori~$T(\mu,1)$ 
with~$1\le \mu <2$.

\begin{remark} \label{rem:construction}\rm
Parts of the above construction yield an explicit full filling by one ball of 
the $4$-torus~$T(\mu,1)$  
for all $\mu = \frac{2m^2}{n^2}$ with $m,n$ relatively prime.
Since the set of rational numbers~$\mu$ of this form is dense in~$\RR_{>0}$,
one is tempted to derive $p \bigl(T(1,1)\bigr) = 1$ from $p \bigl(T(\mu,1)\bigr) =1$ for~$\mu>1$ 
by a limiting argument, 
or to derive $p \bigl(T(\mu,1)\bigr) = 1$ for all $\mu \ge 1$ from the elementary explicit full fillings 
of~$T(\mu,1)$ for $\mu = \frac{2m^2}{n^2}$.
However, without further knowledge about the underlying embeddings, 
the function $\mu \mapsto p \bigl(T(\mu,1)\bigr)$ has no obvious continuity properties.
\diam
\end{remark}

\ni
{\bf Filling by more than one ball.}
The general ball packing problem for a symplectic $4$-manifold $(M,\omega)$ is:
Given a collection 
$\overline B\,\!^4(a_1), \dots, \overline B\,\!^4(a_k)$ of closed balls,
does there exist a symplectic embedding of
$\coprod_{j=1}^k \overline B\,\!^4(a_j)$ into $(M,\omega)$?
Since symplectic embeddings are volume preserving, a necessary condition is 
$\Vol \bigl( \coprod_{j=1}^k \ov  B\,\!^4(a_j) \bigr) < \Vol (M,\omega)$.
We prove that for all linear tori {\em except possibly~$T(1,1)$} 
this is the only condition.

\begin{theorem} \label{t:2}
Assume that $(T^4,\om)$ is a linear torus.
Let $\ov B\,\!^4(a_1), \dots, \ov B\,\!^4(a_k)$ be a collection of balls such that
$$
\Vol \Bigl( \coprod_{j=1}^k \ov B\,\!^4(a_j) \Bigr) \,<\, \Vol \left( T^4,\om \right).  
$$
\begin{itemize}
\item[(i)]
If $(T^4,\om)$ is not symplectomorphic to $T(\mu,\mu)$ for some~$\mu>0$,
there exists a symplectic embedding of $\coprod_{j=1}^k \ov B\,\!^4(a_j)$ into~$(T^4,\om)$.

\s
\item[(ii)]
If $(T^4,\om)$ is symplectomorphic to $T(\mu,\mu)$ for some~$\mu>0$,
then $\coprod_{j=1}^k\ov B\,\!^4(a_j)$ symplectically embeds into~$T(\mu,\mu)$ 
under the further restriction that  $a_j < \mu$ for all~$j$.
\end{itemize}
\end{theorem}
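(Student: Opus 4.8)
The plan is to reduce, in the standard way, from ball packings of $(T^4,\om)$ to the existence of symplectic forms on its blow-ups, and then to produce those forms case by case. Recall (McDuff, McDuff--Polterovich) that $\coprod_{j=1}^k \ov B\,\!^4(a_j)$ embeds symplectically into a closed symplectic $4$--manifold $(M,\om)$ if and only if the $k$--point blow-up $\Tilde M = M\sharp k\,\overline{\CC P^2}$ carries a symplectic form in the class $[\om]-\sum_{j=1}^k a_j\PD(E_j)$ in which each exceptional class $E_j$ is represented by an embedded symplectic sphere — a condition automatic once the form is K\"ahler for a complex structure obtained by blowing up one on $M$. By Lemma~\ref{lem:type}, Remark~\ref{r:normalform} and rescaling it then suffices to treat three cases: (a) irrational tori; (b) the products $T(1,N)$ with $N\in\NN_{\ge 2}$; (c) the balanced products $T(\mu,\mu)$. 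Indeed every rational non-standard torus is, up to scaling, some $T(1,\mu)$ with $\mu=\tfrac mn\in\QQ_{>1}$ in lowest terms; for $n=1$ this is already of type (b), while for $n\ge 2$ one has $m\ge 3$, and after rescaling by $n$ and applying the symplectomorphism $T(m,n)\cong T(1,mn)$ of Remark~\ref{r:normalform} the packing problem lands among the cases (b) with $N=mn\ge 6$.

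\emph{Case (a).} I would argue exactly as in the proof of Proposition~\ref{p:irrat}, but blowing up $k$ distinct points. Choose a linear complex structure $J$ compatible with $\om$ with trivial N\'eron--Severi group — possible because $[\om]$ is irrational, hence not in $\mathrm{NS}$ for a generic compatible $J$, so $(T^4,J)$ carries no complex curve. Then the only irreducible curves on $\Tilde X = T^4_J\sharp k\,\overline{\CC P^2}$ are $E_1,\dots,E_k$, the class $\alpha = [\om]-\sum_j a_j\PD(E_j)$ satisfies $\alpha^2 = 2\Vol(T^4,\om)-\sum_j a_j^2>0$ by the volume hypothesis and $\alpha\cdot E_j = a_j>0$, and $\alpha$ lies in the component of the positive cone containing a K\"ahler class; so the Buchdahl--Lamari criterion (as used for Proposition~\ref{p:irrat}) makes $\alpha$ a K\"ahler class. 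Blowing down the $k$ disjoint exceptional spheres, of areas $a_j$, produces a symplectic form cohomologous to $\om$ into which $\coprod_j\ov B\,\!^4(a_j)$ embeds, and this form is symplectomorphic to $\om$ just as in Proposition~\ref{p:irrat}.

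\emph{Cases (b) and (c).} Here I would follow the scheme of Proposition~\ref{p:muge2}: since $T^2(c)\times D^2(d) = \bigl(T^2(c)\times S^2(d)\bigr)\less\bigl(T^2(c)\times\pt\bigr)$ embeds symplectically into $T^2(c)\times T^2(d)$, it is enough to pack $\coprod_j\ov B\,\!^4(a_j)$ into $T^2(c)\times S^2(d)$ with image disjoint from a constant section. By Biran's packing theorems for ruled surfaces over a base of positive genus \cite{B1,B2} (whose one-ball, section-avoiding form is what underlies Proposition~\ref{p:muge2}), such a packing of $T^2(c)\times S^2(d)$ exists precisely when $\sum_j a_j^2<2cd$ and $a_j<d$ for all $j$, the second condition reflecting the exceptional sphere classes $B-E_j$ with $B=[\pt\times S^2]$. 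For $T(1,N)$ with $N\ge 2$ the volume hypothesis gives $a_j^2<2N\le N^2$, so $a_j<N$ holds automatically and the packing exists; this settles case (b), hence Theorem~\ref{t:2}(i). For $T(\mu,\mu)$ the relevant disc bundle is $T^2(\mu)\times D^2(\mu)$, where the condition $a_j<d$ becomes exactly the hypothesis $a_j<\mu$ of Theorem~\ref{t:2}(ii), so that case follows as well.

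The step I expect to carry the real weight is the one I am importing as a black box: Biran's determination that, for a ruled surface over a positive-genus base, the volume inequality and the fiber-class inequalities are the \emph{only} obstructions to packing by an arbitrary finite collection of balls, together with the refinement — already needed for Proposition~\ref{p:muge2} — that such a packing can be isotoped off a section. Everything else is bookkeeping: tracking the several exceptional classes when checking the Buchdahl--Lamari inequalities in case (a), and observing that the arithmetic bound $mn\ge 2$ lets the symplectomorphism of Remark~\ref{r:normalform} trade a small $S^2$--factor for a large one, which is exactly what removes any size restriction on the $a_j$ away from the balanced locus.
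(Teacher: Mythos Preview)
Your proposal is correct and follows essentially the same route as the paper: the irrational case via Buchdahl--Lamari on a curve-free complex torus (the paper's Proposition~\ref{prop:t2.irrational}), and the rational case via packing $T^2\times S^2$ off a constant section and then embedding the disc bundle into the product torus, with the same arithmetic reduction $T(m,n)\cong T(1,mn)$ to force $N\ge 2$. The one substantive point is that the step you flag as a black box---the multi-ball, section-avoiding packing of $T^2(1)\times S^2(\mu)$ under the constraints $\sum a_j^2<2\mu$ and $a_j<\mu$---is exactly what the paper proves as Proposition~\ref{p:prod.balls}: it is not quite in Biran's papers in this form (his section in~\cite{B2} has negative self-intersection, here it is zero), so the paper redoes the inflation argument with almost complex structures standard near the section~$\widetilde Z$ and checks the cusp-curve count $k_{\text{cusp}}(A)<k(A)$ in this setting. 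You correctly identified this as where the weight lies; the paper carries it out rather than citing it.
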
 

Notice that Theorem~\ref{t:2} generalizes Propositions~\ref{p:irrat} and \ref{p:muge2}.
The extra condition in~(ii) is presumably not needed, 
but the only way we can see to prove this would be by 
explicitly constructing suitable embeddings.

Other examples of manifolds for which the volume is the only obstruction 
to a symplectic embedding of a collection of balls were found by Biran in~\cite{B1,B2}. 
Biran also proved in~\cite{B2} that $T(1,1)$ can be fully packed with $k$~equal balls 
for any~$k \ge 2$.\footnote{In fact, his argument also proves the claim in Theorem~\ref{t:2} 
concerning~$T(1,1)$. His proof is much the same as ours in that he reduces the problem to packing some ruled $4$-manifold. However he considers the projectivization of a holomorphic line bundle of Chern class~$2$ over a genus~$2$ surface, while we use a trivial bundle over~$T^2$. In both cases the spherical fibers have area~$1$.}

\begin{remarks}
\rm
(i)
Our results may give the impression that symplectic embeddings of balls  
into $4$-dimensional tori are as flexible as volume preserving embeddings.
This is far from true, 
as the following consideration from~\cite{B2} shows:
By our results above,
the standard product torus~$T(1,1)$ admits symplectic embeddings
of the ball~$B(a)$ and of the 
disjoint union of two equal size balls~$B(b) \sqcup B(b)$
whenever there is no volume obstruction. 
However, as is well known, 
a symplectic embedding of $B(b) \sqcup B(b)$ into~$B(a)$ 
covers at most half of the volume.
Therefore, given symplectic embeddings $\varphi \colon B(a) \to T(1,1)$ and
$\psi \colon B(b) \sqcup B(b) \to T(1,1)$ that cover more than half of the volume,
it cannot be that the image of $\varphi$ contains the image of~$\psi$.
This ``hidden rigidity'' phenomenon for symplectic embeddings of balls into tori 
clearly does not exist for volume preserving embeddings of balls into tori.

\m
(ii)
Another important invariant of a symplectic manifold $(M,\go)$
is its Hofer--Zehnder capacity $\cHZ (M,\go)$, which is of dynamical nature.
We refer to the books~\cite{HZ, MS} for the definition and elementary properties.
The value of this capacity is unknown for product tori; in fact it is an outstanding problem
to decide whether it is finite or infinite for product tori.

Our computations of the Gromov width $\cG$ of tori give lower bounds for~$\cHZ$,
because $\cG (M,\go) \le \cHZ (M,\go)$ for all symplectic manifolds.
These lower bounds are, however, weaker than the known ones.
These come from the elementary inequality
$$
\cHZ (M,\go) \,\ge\, \cHZ (P,\go_P) + \cHZ (Q,\go_Q),
$$
holding for all products $(M,\go) = (P \times Q, \go_P \oplus \go_Q)$ of closed symplectic manifolds,
together with the fact that the Hofer--Zehnder capacity of a 2-dimensional connected symplectic manifold 
equals its area.
To be explicit, our main theorem implies that $\cHZ \bigl(T(1,1)\bigr) \ge \cG \bigl(T(1,1)\bigr)= \sqrt 2$, 
while it is known that $\cHZ \bigl(T(1,1)\bigr) \ge 1+1 =2$.
\diam
\end{remarks}

The paper is organized as follows.
In Section~\ref{s:ag} we review the lower bounds for the ball filling number 
of $4$-dimensional symplectic tori coming from known computations of Seshadri constants.  
Section~\ref{ss:irrat} contains a proof of Theorem~\ref{t:2}    
in the irrational case.
This proof is based on the construction in Section~\ref{ss:no} 
of symplectic tori with no holomorphic curves. 
In Section~\ref{s:muge2} we prove Theorem~\ref{t:2} for product tori  
$T(1,\mu)$, $\mu \ge 1$, under the condition that $\min \{ a_j,b_j \} < \mu$ for all~$j$. 
In Section~\ref{s:basic} we explain the embedding construction
that we use in Section~\ref{s:t1} to prove 
$p\bigl(T(1,1)\bigr) =1$,
completing the proof of Theorem~\ref{t:1}.
In Section~\ref{s:cone} we prove Corollaries~\ref{c:cone} and \ref{c:1}, and
in Section~\ref{s:quest} we state some open problems related to filling tori.

\medskip
\ni
{\bf Acknowledgment.}
This work has its origin in discussions between the authors and Dietmar Salamon at the Edifest~2010, 
and we would like to thank ETH Z\"urich and its FIM for the stimulating atmosphere during the conference. 
We also thank Paul Biran, Dietmar Salamon and Sewa Shevchishin 
for fruitful discussions.
Finally, we thank the referee for a careful reading and helpful comments on the exposition.

\section{Relations to algebraic geometry} \label{s:ag}

\ni
In this section, we review the implications of some results in algebraic geometry 
for the Gromov radius of $4$-dimensional symplectic tori, 
and also of some higher dimensional ones.

\subsection{Basics}
Before discussing the complex geometry of tori, we recall a classical result.
\begin{lemma}\label{lem:type}
Suppose $\omega$ is a linear symplectic form on a torus 
$T= \R^{2n}/\Lambda$ with integral cohomology class. 
Then $(T,\omega)$ is symplectomorphic to a product of $2$-dimensional tori
$$
T^2(d_1) \times \dots \times T^2(d_n)
$$
with symplectic areas $d_j>0$ satisfying $d_j|d_{j+1}$ for all $j=1,\dots,n-1$.
Moreover, the sequence $d_1 | d_2 | \dots | d_n$ is uniquely determined by~$\Lambda$. 
\end{lemma}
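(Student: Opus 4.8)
The plan is to reduce the statement to the classical theory of the Smith normal form of an integer skew-symmetric matrix. Concretely, I would first fix an isomorphism $\La \cong \Z^{2n}$, so that $\om$, being a translation-invariant (linear) $2$-form with integral cohomology class, is represented on $\La$ by a nondegenerate skew-symmetric bilinear form, i.e.\ by a skew-symmetric matrix $J \in \mathrm{Mat}_{2n}(\Z)$ with $\det J \neq 0$. The content to invoke is the standard structure theorem: there exists $A \in \mathrm{SL}_{2n}(\Z)$ (or at least $\mathrm{GL}_{2n}(\Z)$; one can arrange determinant $+1$ by flipping the sign of a basis vector, which we are free to do) such that $A^\T J A$ is block-diagonal with blocks $\begin{pmatrix} 0 & d_j \\ -d_j & 0\end{pmatrix}$, where $d_1 \mid d_2 \mid \dots \mid d_n$ are positive integers, and the sequence $(d_1,\dots,d_n)$ is uniquely determined by $J$ (these are, up to the pairing, the elementary divisors of $J$). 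This last statement is exactly the uniqueness assertion we want, so the bulk of the work is citing/sketching this normal-form result and matching it to the geometry.

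Next I would translate this back to symplectic geometry. The matrix $A \in \mathrm{SL}_{2n}(\Z)$ induces a linear automorphism of $\R^{2n}$ preserving $\La$, hence a diffeomorphism $\phi$ of $T = \R^{2n}/\La$; by construction $\phi^*\om$ is the linear form whose restriction to $\La$ is $A^\T J A$, i.e.\ the standard block form. A linear form on $T^{2n}$ with that block matrix is precisely the product form $\om_{d_1} \oplus \dots \oplus \om_{d_n}$ on $T^2 \times \dots \times T^2$, where $\om_{d_j}$ is the area form of total area $d_j$ on the $j$-th factor (chosen so that $\int_{T^2} \om_{d_j} = d_j$). Thus $\phi$ is the desired symplectomorphism onto $T^2(d_1) \times \dots \times T^2(d_n)$ with $d_j \mid d_{j+1}$.

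Finally, for the uniqueness of the sequence as an invariant of $\La$ (and not just of the chosen basis), I would note that two such product presentations give two block-diagonal integral normal forms that are $\mathrm{GL}_{2n}(\Z)$-congruent, and then invoke the uniqueness part of the normal-form theorem; alternatively, one can characterize the $d_j$ intrinsically, e.g.\ $d_1$ is the positive generator of the subgroup $\{\,\om(v,w) : v,w \in \La\,\} \subset \Z$, and the later $d_j$ arise by the usual successive-quotient construction, which is manifestly basis-independent. I expect the main obstacle to be purely expository rather than mathematical: the normal-form theorem for integer skew-symmetric forms is classical, so the only real care needed is (a) to be precise that the congruence can be realized by a matrix of determinant $+1$, so that it genuinely descends to a diffeomorphism of the torus respecting orientation if that matters, and (b) to be careful about the normalization identifying "block $\begin{pmatrix}0 & d\\-d & 0\end{pmatrix}$ on $\Z^2$" with "area-$d$ form on $T^2$", since an off-by-a-sign or off-by-a-factor here is easy to commit.
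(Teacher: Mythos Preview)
Your proposal is correct and follows essentially the same route as the paper: both reduce the statement to the normal form for nondegenerate integer skew-symmetric bilinear forms on~$\La$. The only difference is presentational---the paper proves this normal form by hand via the standard inductive splitting (pick $e_1,f_1\in\La$ realizing the minimal positive value~$d_1$, show $\La$ splits as $\span_\Z(e_1,f_1)\oplus\La'$ with $\La'$ the $\om$-orthogonal complement, and recurse), whereas you cite it as a known result; for uniqueness the paper identifies the~$d_j$ as the torsion coefficients of $\Hom(\La,\Z)/\Ima\phi$ where $\phi\colon\La\to\Hom(\La,\Z)$ is induced by~$\om$, which is just another packaging of the elementary-divisor uniqueness you invoke.
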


\begin{remark} \label{r:normalform}
{\rm
It follows that a 4-dimensional product torus $T(m,n) = T^2(m) \times T^2(n)$ 
with integer areas~$m$ and~$n$ is symplectomorphic to 
$T(g,\ell)$ where $g=\gcd(m,n)$, $\ell = \operatorname{lcm}(m,n)$.
}
\end{remark}

\ni
{\it Proof of Lemma~\ref{lem:type}.}
Since $\omega$ is linear, 
it lifts to a linear symplectic form on~$\R^{2n}$, which we again denote by~$\omega$. 
The fact that it represents an integral cohomology class 
on~$T$ is equivalent to 
the fact that it takes integer values when restricted to $\Lambda \times \Lambda$. 
Denote by $d_1 \in \Z$ the positive generator of this image subgroup, 
and choose $e_1,f_1 \in \Lambda$ with $\omega(e_1,f_1)=d_1$.

Every lattice point $\lambda \in \Lambda$ can be written as
$$
\lambda \,=\, \frac {\omega(\lambda,f_1)}{d_1}\, e_1 + 
              \frac {\omega(e_1,\lambda)}{d_1}\, f_1 + \lambda',
$$
where the coefficients of $e_1$ and $f_1$ are integers by the choice of~$d_1$, 
and where $\lambda' \in \Lambda$ is $\omega$-orthogonal to both~$e_1$ and~$f_1$.
In other words, $\Lambda = \span_\Z (e_1,f_1) \oplus \Lambda'$ for some lower dimensional 
sublattice $\Lambda' \subset \Lambda$. Now repeat the argument with~$\Lambda'$ 
in place of~$\Lambda$, noting that the image of~$\omega$ when restricted to 
$\Lambda' \times \Lambda'$ must be a subgroup of $d_1 \Z \subset \Z$. 
This finishes the proof in $n$~steps. 

To prove the uniqueness of the sequence $d_1|\dots|d_n$ for a given
torus $T=\RR^{2n}/\Lambda$, note that since $\omega$ is non-degenerate and integral, it gives rise to an embedding $\phi: \Lambda \to \Hom(\Lambda,\ZZ)$, namely $\phi(\lambda_1)(\lambda_2)=\omega(\lambda_1,\lambda_2)$. Now the $d_j$ are the torsion coefficients of the finitely generated abelian group 
$\Hom(\Lambda;\ZZ)/\Ima \phi$, which are well-known to be invariants of this group.
\proofend

Complex tori are often defined as the quotients of~$\C^n$ by some cocompact lattice $\Lambda\cong \Z^{2n}$. 
In dimension~$4$, the Enriques--Kodaira classification of compact complex surfaces 
implies that every complex manifold diffeomorphic to~$T^4$ is biholomorphic to such 
a model. In higher dimensions, this is still true if the complex structure 
is compatible with a K\"ahler form, but false in general (for examples, see e.g.\ \cite{COP} and references therein).

Conversely, the standard symplectic form on $\C^n$ descends to a K\"ahler form 
on any quotient $\C^n/\Lambda$, so every complex torus admits 
a compatible K\"ahler structure whose symplectic form is translation invariant.

\subsection{Seshadri constants of tori} \label{ss:Seshadri}

Here we review some results described by Lazarsfeld in~\cite[Chapter~5]{Lazarsfeld:positivityI}, 
which do not seem to be widely known among symplectic geometers. 
For an irreducible projective variety $X$ and a point~$x \in X$ we denote by 
$$
\pi \colon \widetilde X \to X
$$
the blow-up of $X$ at $x$, with exceptional divisor $\Sigma \subset \widetilde X$. 
Recall that a line bundle~$L$ on~$X$ is called {\em nef}\/ 
if for every irreducible curve $C \subset X$ one has $\int_C c_1(L) \geq 0$.
\begin{definition}
{\rm 
(cf.\ \cite[Def.~5.1.1.]{Lazarsfeld:positivityI})
Suppose $L$ is a nef line bundle on~$X$.
The {\em Seshadri constant of $(X,L)$ at $x \in X$} is defined to be the real number 
\begin{equation} \label{def:Seshadri}
\eps(L;x) \,:=\, 
\max \bigl\{ \eps \geq 0 \,\mid\, \int_{\widetilde C}\pi^*(c_1L) - \eps \Sigma \cdot \widetilde C \geq 0 
             \mbox{ \, for all curves } \widetilde C \subset \widetilde X \bigr\}.
\end{equation}
}
\end{definition}
It is clear that $\eps(L,x)$ is always nonnegative, 
and in fact one has the alternative description 
(cf.\ \cite[Prop.~5.1.5.]{Lazarsfeld:positivityI})
\begin{equation}\label{eq:seshadri1}
\eps(L;x) \,=\, \inf_{x \in C \subset X} \frac {\int_C c_1(L)}{\operatorname{mult}_x C},
\end{equation}
where the infimum is taken over all irreducible curves $C \subset X$ passing through~$x$, 
and $\operatorname{mult}_x C \in \N$ denotes the multiplicity of~$C$ at~$x$. 
This shows that one can obtain upper bounds on~$\eps(L;x)$ from specific curves 
passing through $x \in X$. 

From the symplectic point of view, we are particularly interested in the case when~$X$ 
is a smooth projective variety, and $L$~is an ample line bundle.
Then one can choose a K\"ahler form~$\omega_L$ representing~$c_1(L)$. 
Since the space of symplectic forms in a fixed cohomology class which are compatible 
with a fixed (almost) complex structure is contractible, 
any two such forms are symplectically isotopic. 

Now there is a strong relationship between symplectic embeddings of balls and 
symplectic blow-up, which was first described by McDuff~\cite{M-blowup} 
and McDuff--Polterovich~\cite{MP}: 
An embedding of a closed symplectic ball~$B(a)$ of capacity~$a$ 
into a given symplectic manifold~$X$ gives rise to a symplectic form on the 
topological blow-up 
$\pi \colon \Tilde X \to X$ whose cohomology class is given by 
$\pi^*[\omega] - a \PD[\Sigma]$, where $\Sigma \subset \Tilde X$ is the exceptional divisor,
and $\PD [\Sigma]$ denotes the Poincar\'e dual of~$[\Sigma]$.
Conversely, given a tame  symplectic form 
on the complex blow-up $(\Tilde X, \Tilde J)$
in a class $\pi^*\alpha - a \PD [\Sigma]$, 
one can find a symplectically embedded ball~$B(a)$ in $(X,\omega)$ 
with $[\omega] = \alpha \in H^2(X;\R)$,
see~\cite[Cor.~2.1.D]{MP}.

As pointed out in \cite[Thm.~5.1.22.]{Lazarsfeld:positivityI}, this discussion then leads to the following result, 
which is a direct consequence of~\cite[Cor.~2.1.D]{MP}:
\begin{prop} 
\label{prop:seshadribound}
For fixed $X$ and $L$ as above, denote by 
$$
\eps(X,L) := \max_{x \in X} \,\eps(L;x).
$$ 
Then the Gromov width of~$(X,\omega_L)$, defined in~\eqref{def:Gwidth}, satisfies
$$
\cG(X,\omega_L) \,\geq\, \eps(X,L).
$$
\end{prop}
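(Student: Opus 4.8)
The plan is to deduce Proposition~\ref{prop:seshadribound} directly from the definition of the Seshadri constant~\eqref{def:Seshadri} together with the criterion of McDuff--Polterovich \cite[Cor.~2.1.D]{MP} already quoted in the excerpt. First I would fix a point $x \in X$ at which the maximum $\eps(X,L) = \eps(L;x)$ is attained, and let $\eps < \eps(L;x)$ be arbitrary; since the statement is about the supremum defining $\cG$, it suffices to produce a symplectic embedding of the closed ball $B(\eps)$ into $(X,\omega_L)$ for every such~$\eps$. (If $\eps(X,L)=0$ there is nothing to prove.)

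The key step is to construct a tame symplectic form on the complex blow-up $(\Tilde X, \Tilde J)$ at~$x$ in the cohomology class $\pi^*(c_1(L)) - \eps\,\PD[\Sigma]$. Here I would invoke standard K\"ahler geometry: choosing a K\"ahler form $\omega_L$ on~$X$ representing $c_1(L)$, the blow-up $\Tilde X$ carries K\"ahler forms in classes $\pi^*(c_1(L)) - t\,\PD[\Sigma]$ for all sufficiently small $t>0$ (obtained by the familiar local model gluing a piece of a Fubini--Study form near~$\Sigma$), and the set of real numbers $t>0$ for which such a K\"ahler class exists is precisely the open interval $(0,\eps(L;x))$ by the projection-formula computation $\bigl(\pi^*(c_1 L) - t\,\PD[\Sigma]\bigr)\cdot \Tilde C = \int_C c_1(L) - t\,\mathrm{mult}_x C$ for the proper transform $\Tilde C$ of each irreducible curve $C$ through~$x$, using the characterization~\eqref{eq:seshadri1}: positivity on all curve classes is equivalent to $t < \eps(L;x)$, and the Nakai--Moishezon criterion (which on a surface also requires positive self-intersection, automatic for~$t$ small and surviving up to~$\eps(L;x)$ since $c_1(L)^2>0$) then upgrades this to ampleness, hence to a K\"ahler representative. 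In particular for $t = \eps < \eps(L;x)$ we get a K\"ahler, so tame, form in the desired class.

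Then I would simply apply \cite[Cor.~2.1.D]{MP} as stated in the excerpt: a tame symplectic form on $(\Tilde X,\Tilde J)$ in a class of the form $\pi^*\alpha - a\,\PD[\Sigma]$ yields a symplectically embedded ball $B(a)$ in $(X,\omega)$ with $[\omega]=\alpha$. With $\alpha = c_1(L)$ and $a = \eps$ this produces a symplectic embedding of $B(\eps)$ into a symplectic manifold with symplectic form in class $c_1(L)$ tamed by~$\Tilde J$; since symplectic forms in a fixed class compatible with a fixed almost complex structure form a contractible (in particular connected) set, the target is symplectomorphic to $(X,\omega_L)$, as already remarked in the excerpt. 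Hence $\cG(X,\omega_L) \ge \eps$ for all $\eps < \eps(X,L)$, and taking the supremum over~$\eps$ gives $\cG(X,\omega_L) \ge \eps(X,L)$.

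The main obstacle is the algebro-geometric step producing the K\"ahler form on the blow-up in the class $\pi^*(c_1 L) - \eps\,\PD[\Sigma]$ for $\eps$ ranging over the full interval $(0,\eps(L;x))$: the easy local gluing only gives small~$\eps$, and pushing to the optimal bound requires the Nakai--Moishezon criterion on the surface~$\Tilde X$ (checking positivity against $\Sigma$, against the proper transforms of all irreducible curves through~$x$, and against all other curve classes, plus positive self-intersection). On a smooth projective \emph{surface} this is classical, but it is exactly the place where the sharpness of the Seshadri constant enters, so it deserves to be spelled out rather than waved through; everything else is a direct citation of \cite{MP} and the contractibility remark already in the text.
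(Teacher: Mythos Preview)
Your approach is essentially the same as the paper's, which is even terser: it simply asserts that the relevant class on the blow-up is ample (hence K\"ahler) and invokes \cite[Cor.~2.1.D]{MP}. One point worth noting: you restrict your ampleness verification to surfaces via Nakai--Moishezon, but the paper applies Proposition~\ref{prop:seshadribound} to higher-dimensional tori in~\S\ref{ss:higherdim}. In arbitrary dimension, checking positivity on curves only yields nefness, not ampleness; the clean fix is to observe that for any $0<t<\eps(L;x)$ one can write $\pi^*(c_1L)-t\,\PD[\Sigma]$ as a convex combination of $\pi^*(c_1L)-t_0\,\PD[\Sigma]$ (ample for small $t_0>0$ by your local gluing) and $\pi^*(c_1L)-\eps(L;x)\,\PD[\Sigma]$ (nef by definition~\eqref{def:Seshadri}), and ample plus nef is ample. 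With that adjustment your argument goes through in all dimensions.
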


By the relation~\eqref{e:cGp}, this estimate is equivalent to
\begin{equation} \label{est:pe}
p(X,\omega_L) \,\geq\, \frac{(\eps(X,L))^n}{n! \Vol (X,\omega_L)} .
\end{equation}
The proof of Proposition~\ref{prop:seshadribound} is based on the fact that when $\eps(X;L)>0$, 
then the pullback $\widetilde L$ of~$L$ to the blow-up~$\widetilde X$ is ample, 
so that $c_1(\widetilde L)$ has a K\"ahler representative.

\begin{remark} \label{rem:Kahler.eps}
{\rm 
The same blow-up argument also shows that the capacity of the largest \emph{symplectically and holomorphically} embedded ball in the K\"ahler manifold $(X,\omega_L)$ bounds the Seshadri constant~$\eps(X,L)$ from below 
(for details, cf.~\cite[Prop.~5.3.17]{Lazarsfeld:positivityI}).
}
\end{remark}

In what follows, we will study the family of symplectic product tori  
$T^2(1) \times T^2(d)$ where $d \in \Z$. 
By Lemma~\ref{lem:type}, up to rescaling this class contains all symplectic 4-tori 
whose symplectic form is linear and has a rational cohomology class. 
Now suppose that $(T,\omega)$ is such a symplectic torus, 
and choose a translation-invariant compatible complex structure~$J$, 
so that $(T,J,\omega)$ is a K\"ahler manifold. 
If~$L$ is the complex line bundle on~$(T,J)$ with first Chern class~$[\omega]$, 
then $L$ is ample. Complex tori admitting such a line bundle are called abelian varieties, and the line bundle or its first Chern class is called a {\it polarization}. 
Note that, conversely, the first Chern class of any ample line bundle~$L$ on some 
complex torus can be represented by a translation-invariant rational symplectic form~$\omega_L$, 
and so all abelian varieties arise as above. 

The sequence of integers $(d_1,\dots,d_n)$ for $(T,\omega_L)$ appearing in 
Lemma~\ref{lem:type} is called the {\it type}\/ of the polarization, 
and the polarization is called {\it principal}\/ and often denoted by~$\Theta$ 
if it is of type $(1,\dots,1)$, i.e.\ it corresponds to the standard symplectic product torus.

Since translations act transitively on any abelian variety~$A$, 
the Seshadri constants for abelian varieties do not depend on the choice of 
the point~$x \in A$. 
One has the general bounds 
\begin{equation} \label{eq:seshadri2}
d_1 \,\leq\, \eps(A^n,L) \,\leq\, (n!d_1\cdots d_n)^{\frac 1 n}
\end{equation}
for an ample line bundle $L$ of type $(d_1,\dots,d_n)$. 
The upper bound follows from the estimate~\eqref{est:pe},
see also~\cite[Prop.~5.1.9]{Lazarsfeld:positivityI}.
For the lower bound, recall from Lemma~\ref{lem:type} that $d_1 | d_2 | \dots | d_n$,
and consider the ample line bundle~$L'$ of type $(1, d_2/d_1, \dots, d_n/d_1)$.
By \cite[Ex.~5.3.10]{Lazarsfeld:positivityI},
$\eps(A,L') \ge 1$, and hence $\eps(A,L) \ge d_1$.
%
%
%
%
Similarly, the symplectic embedding of the ball of capacity~$d_1$ into the polydisk 
$B^2(d_1) \times \dots \times B^2(d_n) \subset A$ gives the same lower bound~$d_1$ 
for the Gromov width of~$(A,\omega_L)$.

The best lower bounds on Seshadri constants for abelian varieties of a given type 
seem to come from irreducible ones, i.e., those which cannot be written as a product 
of lower-dimensional complex tori. 
Here we list the known results, according to \cite[Rem.\ 5.3.12]{Lazarsfeld:positivityI}.

First, to get a bound on the ball filling number of~$T(1,1)$, 
according to the discussion above we need to consider principally polarized 
abelian surfaces~$(A^2,\Theta)$. 
Steffens~\cite[Prop.~2 and~3]{Steffens} 
has shown that in this case
\begin{equation} \label{eq:seshadri_standard_4torus}
\eps(A^2,\Theta) \,\le\, \tfrac 43,
\end{equation}
with equality if $A$ is irreducible. 
Together with the estimate~\eqref{est:pe} 
we obtain the lower bound 
\begin{equation} \label{eq:sesh_bound_standard_4torus}
p(T(1,1)) \,\geq\, \tfrac 8 9.
\end{equation}

For tori of type $(1,d)$ one can get lower bounds from non-principal polarizations 
of abelian surfaces~$(A^2,L)$. Indeed, it is known from the work of 
Steffens~\cite[Prop.~1]{Steffens} that {\em if $2d$ is a perfect square}, 
then there are abelian surfaces with a polarization~$L$ of type~$(1,d)$ and
\begin{equation} \label{eq:seshadri_2d_square}
\eps(A^2,L) = \sqrt{2d}, 
\end{equation}
which is optimal since it equals the volume bound in~\eqref{eq:seshadri2}.
This immediately implies
\begin{equation} \label{eq:seshbound_2d_square}
p(T(1,d)) = 1 \quad \text{\rm if $2d$ is a perfect square}.
\end{equation}
We will describe explicit examples of such full fillings by one symplectic ball 
in Section~\ref{ss:shears}.
The identities~\eqref{eq:seshbound_2d_square} and Remark~\ref{r:normalform}
imply that $p(T(\mu,1)) = 1$ for all $\mu = \frac{2m^2}{n^2}$ with $m,n$ relatively prime integers.

On the other hand, when $2d$ is not a perfect square, 
then Bauer and Szemberg~\cite{Bauer:abelian} have shown that
\begin{equation} \label{eq:seshadri_2d_not_square}
\eps(A^2,L) \,\leq\, 2d \frac {k_0}{\ell_0} \,=\, \sqrt{2d} \,\cdot \, \sqrt{\frac {2dk_0^2}{2dk_0^2+1}},
\end{equation}
where $(k_0,\ell_0)$ is the smallest solution in positive integers of Pell's equation
$$
\ell^2 -2dk^2 = 1.
$$
(There always exists such a solution, as was first shown by Lagrange, \cite{Lagrange}.)
Moreover, by a result of Bauer~\cite{Bauer:surfaces} equality holds whenever 
positive multiples of~$L$ are the only ample line bundles on~$A$. 
Since complex structures~$J$ with this property exist for all symplectic types~$(1,d)$, 
this gives the best constant for use in Proposition~\ref{prop:seshadribound}. 
For $d \leq 30$, the relevant solutions to Pell's equation have been tabulated 
in~\cite[p.~572]{Bauer:surfaces}, and we give their translation in terms of 
the lower bound on the ball filling numbers 
$p(T(1,d)) \geq \frac {\eps^2}{2d} = \frac {\ell_0^2-1}{\ell_0^2}$ in the following table. 

$$
\begin{array}{|c|ccc|c|ccc|c|ccc|}
\hline
d & k_0 & \ell_0 & \frac {\eps^2}{2d} & d & k_0 & \ell_0 & \frac {\eps^2}{2d}& d & k_0 & \ell_0 & \frac {\eps^2}{2d}\\
\hline
1 & 2 & 3 & \frac 8 9 & 11 & 42 & 197 & \frac {38808}{38809} & 
21 & 2 & 13 & \frac {168}{169}\\
2 & & & 1 & 12 & 1 & 5 & \frac {24}{25} &
22 & 30 & 199 & \frac {39600}{39601}\\
3 & 2 & 5 & \frac {24}{25} & 13 & 10 & 51 & \frac {2600}{2601} &
23 & 3588 & 24335 & \frac {592192224}{592192225}\\
4 & 1 & 3 & \frac 8 9 & 14 & 24 & 127 & \frac {16128}{16129} &
24 & 1 & 7 & \frac {48}{49}\\
5 & 6 & 19 & \frac {360}{361} & 15 & 2 & 11 & \frac {120}{121} &
25 & 14 & 99 & \frac {9800}{9801}\\
6 & 2 & 7 & \frac {48}{49} & 16 & 3 & 17 & \frac {288}{289} &
26 & 90 & 649 & \frac {421200}{421201}\\
7 & 4 & 15 & \frac {224}{225} & 17 & 6 & 35 & \frac {1224}{1225} &
27 & 66 & 485 & \frac {235224}{235225}\\
8 & & & 1 & 18 & & & 1 & 28 & 2 & 15 & \frac {224}{225}\\
9 & 4 & 17 & \frac {288}{289} & 19 & 6 & 37 & \frac {1368}{1369} &
29 & 2574 & 19605 & \frac {384356024}{384356025}\\
10 & 2 & 9 & \frac {80}{81} & 20 & 3 & 19 & \frac {360}{361} &
30 & 4 & 31 & \frac {960}{961}\\
\hline
\end{array}
$$

\medskip
\subsection{Seshadri estimates for higher dimensional tori} \label{ss:higherdim}

One well-studied class of principally polarized abelian varieties of arbitrary dimension 
are the Jacobians of curves (cf.\ e.g.~\cite[Chapter~11]{Birkenhake}). 
Here we just recall that the Jacobian of a complex curve~$C$ 
is the complex torus 
$$
JC \,:=\, \Hom(\Omega^{1,0},\C)/H_1(C;\Z),
$$
where $\Omega^{1,0}$ denotes the complex vector space of holomorphic 1-forms, 
and the embedding $H_1(C;\Z) \subset \Hom(\Omega^{1,0},\C)$ is given by 
integration over cycles. The complex dimension of~$JC$ equals the genus of~$C$, 
and the principal polarization is derived from the natural symplectic structure 
on $H_1(C;\Z) \otimes \R$ which is given by the intersection product.

In complex dimension $n=3$, Bauer and Szemberg \cite{BauerSzemberg:threefolds} 
have shown that a principally polarized abelian variety $(A^3,\Theta)$ 
has $\eps(\Theta) = \frac 3 2$ if $A$ is the Jacobian of a hyperelliptic curve 
of genus~$3$ and
\begin{equation}\label{eq:seshadri_dim3}
\eps(A^3,\Theta) = \tfrac {12} 7
\end{equation}
otherwise. 
(A complex curve is called hyperelliptic if it admits a double branched cover to~$\C P^1$).
Hence $p\bigl(T(1,1,1)\bigr) \ge \frac{288}{343}$.

In complex dimension $n=4$, Debarre~\cite{Debarre:higher} has shown that for 
the Jacobian~$A^4=JC$ of a non-hyperelliptic curve of genus~$4$ one has 
\begin{equation} \label{eq:seshadri_dim4}
\eps(A^4,\Theta) = 2.
\end{equation}
Hence $p\bigl(T(1,1,1,1)\bigr) \ge \frac 23$.

For high dimensions, Jacobians appear to give very poor lower bounds for use in 
Proposition~\ref{prop:seshadribound}. 
However, Lazarsfeld~\cite{Lazarsfeld:seshadri} combined the work of 
McDuff and Polterovich~\cite{MP} with work of Buser and Sarnak 
on minimal period lengths to deduce that there exist principally polarized 
abelian varieties~$(A^n,\Theta)$ of complex dimension~$n$ with
$$
\eps(A^n,\Theta) \,\geq\, \tfrac 1 4 (2n!)^{\frac 1 n}.
$$
Bauer has generalized this, showing that there exist 
polarized abelian varieties~$(A^n,L)$ of arbitrary type $(d_1,\dots,d_n)$ with
\begin{equation} \label{eq:seshadri4}
\eps(A^n,L) \,\geq\, \tfrac 1 4 (2 n!d_1\dots d_n)^{\frac 1 n}.
\end{equation}
While this is only a factor of less than $4$ away from the upper bound 
of~\eqref{eq:seshadri2}, the volume fraction filled by the symplectic ball 
predicted from this lower bound is $2\left(\frac 1 4\right)^n$, 
and hence tends exponentially to zero as~$n \to \infty$.

\section{Proof of Theorem~\ref{t:2}} \label{s:pack}

\subsection{Irrational case}\label{ss:irrat}           
We will use the following result of Buchdahl~\cite{Buchdahl} and Lamari~\cite{Lamari}:
\begin{thm}[\cite{Buchdahl, Lamari}]
Let $(X,J)$ be a compact complex surface. 
A cohomology class $\alpha \in H^{1,1}(X;\R)$ admits a K\"ahler representative
compatible with the complex structure~$J$ 
if $\alpha \cup \alpha >0$, $\alpha\cup [\rho] >0$ for some positive closed $(1,1)$-form~$\rho$
on~$X$ and $\alpha \cdot [D]>0$ for every effective divisor $D \subset X$.
\footnote{Notice that the form~$\rho$ is a K\"ahler form. Therefore, Buchdahl's condition that the first Betti number of~$X$ is even is automatic.}
\end{thm}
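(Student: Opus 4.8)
The plan is to recognize this as the surface case of the cohomological description of the K\"ahler cone and to follow the current-theoretic approach of Buchdahl and Lamari. First I would observe that a positive closed $(1,1)$-form is by definition a K\"ahler form, so the hypothesis on $\rho$ already makes $X$ a compact K\"ahler surface; in particular the $i\partial\bar\partial$-lemma holds, Hodge theory is available, the intersection form on $H^{1,1}(X;\R)$ has signature $(1,h^{1,1}-1)$, and the hypothesis $\alpha\cup[\rho]>0$ places $\alpha$ in the connected component $\mathcal P$ of the positive cone $\{\beta\mid\beta^2>0\}$ that contains the K\"ahler cone. Thus it remains to show that a class in $\mathcal P$ that is positive on every irreducible curve is represented by a smooth K\"ahler form.

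The first step is a duality argument in the spirit of Lamari. Working in the Fr\'echet space of smooth real $(1,1)$-forms and its dual space of $(1,1)$-currents, one applies Hahn--Banach separation to the closed convex cone of currents cohomologous to a semipositive form (currents of the shape $S+i\partial\bar\partial\varphi$ with $S\ge 0$ smooth and $\varphi$ smooth). This yields the dichotomy: either $\alpha$ contains a \emph{K\"ahler current}, i.e.\ a closed positive $(1,1)$-current $T$ with $T\ge\delta\rho$ for some $\delta>0$; or there is a nonzero closed positive $(1,1)$-current $\Theta$ with $\langle\alpha,\{\Theta\}\rangle\le 0$.

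The heart of the argument, and the step I expect to be hardest, is ruling out the second alternative. I would apply Siu's decomposition on the surface, $\Theta=\sum_j\lambda_j[C_j]+R$, where $\lambda_j>0$, the $C_j$ are distinct irreducible curves, and $R$ is a closed positive $(1,1)$-current with vanishing generic Lelong number along every curve. Since $\lambda_j(\alpha\cdot C_j)>0$ by hypothesis, we must have $\langle\alpha,\{R\}\rangle\le 0$ and $R\ne 0$. The delicate point is that, because $R$ carries no mass along any curve, its cohomology class is nef --- $\{R\}\cdot C=\int_C R|_C\ge 0$ for every irreducible curve $C$, by a local Lelong-number estimate --- and, crucially owing to the absence of a divisorial component, satisfies $\{R\}^2\ge 0$. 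Since moreover $\{R\}\cdot[\rho]=\int_X R\wedge\rho>0$ with $[\rho]\in\mathcal P$, the class $\{R\}$ lies in the closure $\overline{\mathcal P}$; by the Hodge index theorem $\alpha\cdot\{R\}>0$, contradicting $\langle\alpha,\{R\}\rangle\le 0$. Hence only the first alternative survives, and $\alpha$ contains a K\"ahler current.

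It then remains to regularize this K\"ahler current to a genuine K\"ahler form in the class $\alpha$. Here I would invoke Demailly's regularization of closed positive currents: there are smooth forms $\theta_\eps\in\alpha$ with $\theta_\eps\ge-\eps\rho$ away from a fixed proper analytic set and with logarithmic poles along finitely many curves and points whose Lelong numbers are controlled. On a surface the negative contributions can be removed one component at a time --- the obstruction along a curve $C$ is measured by $\alpha\cdot C$ and at an isolated point by $\alpha^2$, both positive by hypothesis --- so that after a mass/volume comparison (pulling back to a blow-up of $X$ if needed) one obtains a smooth K\"ahler representative of $\alpha$. This is where the curve-positivity and self-intersection hypotheses are used a second time, and it completes the proof. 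As an alternative to the last three steps one could run Buchdahl's continuity method, starting from a Gauduchon metric and deforming a known K\"ahler class towards $\alpha$ while solving a family of $i\partial\bar\partial$-equations; openness is elementary and closedness again reduces, via the same estimates on positive currents, to the hypotheses $\alpha^2>0$ and $\alpha\cdot C>0$.
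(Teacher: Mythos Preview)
The paper does not prove this theorem: it is quoted as a black box from the references~\cite{Buchdahl, Lamari} and then applied. So there is no ``paper's own proof'' to compare against; your proposal is an attempt to reconstruct the original arguments of Buchdahl and Lamari.

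Your sketch follows Lamari's current-theoretic route fairly faithfully: Hahn--Banach duality to obtain either a K\"ahler current in the class or an obstructing positive current~$\Theta$, Siu decomposition of~$\Theta$, and then Hodge index to reach a contradiction. The outline is sound, and you have correctly identified the delicate step, namely that the residual part~$R$ (with vanishing generic Lelong numbers along every curve) satisfies $\{R\}^2\ge 0$. You should be aware that this is not a one-line observation: it relies on a regularization argument (on a surface the upper-level sets of the Lelong number of~$R$ are finite point sets, so after Demailly regularization the loss of positivity is concentrated at points and does not affect the self-intersection in the limit). Your final regularization step, passing from a K\"ahler current to a smooth K\"ahler form, is also correct in spirit but again uses nontrivial input (Demailly's regularization with control of Lelong numbers, together with the surface-specific fact that the singular locus is at worst curves and points, each handled by the hypotheses $\alpha\cdot C>0$ and $\alpha^2>0$).

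Your closing remark about Buchdahl's alternative continuity method is accurate; that approach avoids currents in favor of a priori estimates for a Monge--Amp\`ere-type equation, and is genuinely different in flavor. Either route is acceptable, but for the purposes of this paper neither is needed: the result is simply cited.
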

In symplectic language, 
the last condition means that the class~$\alpha$ should integrate positively over every compact holomorphic curve in~$X$. 

Our argument is based on the following result, whose proof is deferred until the next subsection.

\begin{prop}  \label{prop:no}
Any irrational linear symplectic form $\om$ on $T^4$ may be identified with a 
K\"ahler form on a torus  $T= \C^2/\Lambda$ that has no 
nonconstant compact holomorphic curves.
\end{prop}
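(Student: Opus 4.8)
The plan is to choose the complex structure generically among those compatible with~$\omega$. Write $(T^4,\omega)=\RR^4/\Lambda$ with $\omega$ a constant‑coefficient symplectic form on~$\RR^4$. A translation‑invariant complex structure compatible with~$\omega$ is the same as a linear complex structure $J$ on~$\RR^4$ with $\omega(J\cdot,J\cdot)=\omega$ and $\omega(\cdot,J\cdot)$ positive definite; for every such~$J$ the quotient $(T^4,J)$ is a complex torus $\C^2/\Lambda$ on which $\omega$ descends to a K\"ahler form. Let~$\mathcal J$ denote the set of all such~$J$; it is a nonempty connected real‑analytic manifold (a copy of the Siegel upper half space). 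I want to find $J\in\mathcal J$ for which $(T^4,J)$ carries no nonconstant compact holomorphic curve.

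First I would reduce this to the statement that no nonzero integral alternating $2$‑form on~$\Lambda$ is of type~$(1,1)$ for~$J$ (equivalently, $\operatorname{NS}(T^4,J)=0$). Indeed, if $C\subset X:=(T^4,J)$ is a nonconstant compact holomorphic curve then $\int_C\omega>0$, so $0\ne[C]\in H_2(X;\ZZ)$ and the Poincar\'e dual of~$[C]$ is a nonzero integral class of type~$(1,1)$. Now recall that a real $2$‑form~$\beta$ is of type~$(1,1)$ for~$J$ exactly when $\beta(J\cdot,J\cdot)=\beta$; for a fixed integral alternating form~$\beta$ on~$\Lambda$ the locus $Z_\beta:=\{J\in\mathcal J:\beta(J\cdot,J\cdot)=\beta\}$ is cut out by polynomial equations in the entries of~$J$, hence is a real‑analytic subset of the connected manifold~$\mathcal J$. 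Since the group of integral alternating forms on~$\Lambda$ is countable, a Baire category argument reduces the problem to showing that $Z_\beta$ is a \emph{proper} subset of~$\mathcal J$ for every nonzero integral~$\beta$.

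This last point is where irrationality of~$\omega$ enters, and I expect it to be the main obstacle. Suppose, for contradiction, that $Z_\beta=\mathcal J$ for some nonzero integral~$\beta$. Write $\beta(x,y)=\omega(Ax,y)$ for the unique $A\in\operatorname{End}(\RR^4)$. Since every $J\in\mathcal J$ preserves~$\omega$, one has $\beta(Jx,Jy)=\omega(AJx,Jy)=\omega(J^{-1}AJx,y)$, so $\beta(J\cdot,J\cdot)=\beta$ is equivalent to $J^{-1}AJ=A$; hence $A$ would commute with \emph{every} $\omega$‑compatible complex structure. Now every $J\in\mathcal J$ lies in the Lie algebra $\operatorname{sp}(\RR^4,\omega)$ (because $J^2=-\id$ and $\omega(J\cdot,J\cdot)=\omega$ force $\omega(J\cdot,\cdot)+\omega(\cdot,J\cdot)=0$), and $\mathcal J$ is a single orbit of $\operatorname{Sp}(\RR^4,\omega)$ acting by conjugation, so the linear span of~$\mathcal J$ is a nonzero conjugation‑invariant subspace of the irreducible adjoint module $\operatorname{sp}(\RR^4,\omega)$ and therefore equals it. Hence $A$ commutes with all of $\operatorname{sp}(\RR^4,\omega)$; as $\RR^4$ is irreducible under $\operatorname{sp}(\RR^4,\omega)$ with complexification the standard representation, Schur's lemma forces $A=c\,\id$ for some $c\in\RR$, $c\ne0$, i.e.\ $\beta=c\,\omega$. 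But then $\omega=c^{-1}\beta$ takes values in $c^{-1}\ZZ$ on $\Lambda\times\Lambda$, so the image of $\int\omega\colon H_2(T^4;\ZZ)\to\RR$ has rank at most~$1$ over~$\QQ$ --- contradicting the irrationality of~$\omega$. Thus $Z_\beta\subsetneq\mathcal J$ for every nonzero integral~$\beta$.

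Combining the two steps, $\mathcal J\setminus\bigcup_{\beta\ne0}Z_\beta$ is a dense~$G_\delta$, so we may choose~$J$ in it; then $(T^4,J)=\C^2/\Lambda$ has $\operatorname{NS}=0$ and hence no nonconstant compact holomorphic curve, while $\omega$ is K\"ahler for~$J$, as required. The only inputs beyond linear algebra and the category argument are the standard facts that the space of $\omega$‑compatible linear complex structures is a connected real‑analytic manifold and that $\operatorname{Sp}(\RR^4,\omega)$ acts irreducibly both on~$\RR^4$ and, via conjugation, on its Lie algebra.
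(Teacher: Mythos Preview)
Your argument is correct and takes a genuinely different route from the paper's proof.

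The paper proceeds constructively: it first exhibits a specific family of lattices $\Lambda_P\subset\C^2$ (depending on parameters $p,q,r,s$ that are rationally independent with $ps-qr$ irrational) for which $H^{1,1}\cap H^2(\C^2/\Lambda_P;\Z)=0$ by a direct computation (Lemma~3.4). It then puts the matrix of~$\omega$ into a convenient form via an $\SL(4,\Z)$ change of basis (Lemma~3.5) and explicitly solves a system of linear equations to match the given lattice~$\Lambda$ with some~$\Lambda_P$ in such a way that~$\omega$ becomes a $(1,1)$-form satisfying the positivity condition for the standard~$J_0$ on~$\C^2$. The irrationality hypothesis is used both to arrange the normal form and to guarantee that the parameters $p,q,r,s$ can be chosen as required (Lemma~3.7).

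Your approach instead fixes~$\Lambda$ and varies~$J$ over the Siegel domain~$\mathcal J$ of compatible linear complex structures, showing by a Lie-algebraic Schur argument that the condition $Z_\beta=\mathcal J$ forces $\beta$ to be a scalar multiple of~$\omega$---which is where irrationality enters, exactly once and transparently. A Baire category argument then produces~$J$ with trivial N\'eron--Severi group. This is cleaner and more conceptual; it also generalizes verbatim to $T^{2n}$ for all~$n\ge1$, since $\mathfrak{sp}(2n,\R)$ is simple and its standard representation is absolutely irreducible. What you lose is explicitness: the paper's proof hands you a concrete complex torus, whereas yours only asserts that a generic choice works.
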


\begin{prop}  \label{prop:t2.irrational}
Theorem~\ref{t:2}  holds for irrational tori.
\end{prop}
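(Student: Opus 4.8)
The plan is to combine Proposition~\ref{prop:no} with the Buchdahl--Lamari theorem and the blow-up/ball-packing correspondence of McDuff--Polterovich to reduce the packing problem for an irrational torus $(T^4,\om)$ to a purely cohomological positivity check on the iterated blow-up. First I would use Proposition~\ref{prop:no} to realize $(T^4,\om)$ as a K\"ahler torus $(T,J,\om)$ with $T = \C^2/\La$ having \emph{no} nonconstant compact holomorphic curves. Then, given balls $\ov B^4(a_1),\dots,\ov B^4(a_k)$ with $\sum_j \tfrac{a_j^2}{2} < \Vol(T^4,\om)$, I would pass to the $k$-fold (topological) blow-up $\pi\colon \Tilde T \to T$ with exceptional divisors $\Si_1,\dots,\Si_k$ and consider the class
$$
\al \,=\, \pi^*[\om] - \sum_{j=1}^k a_j\,\PD[\Si_j] \,\in\, H^{1,1}(\Tilde T;\R),
$$
where we use that $\Tilde T$ carries the blown-up complex structure $\Tilde J$ and that $\pi^*[\om]$ and the $\PD[\Si_j]$ are all of type $(1,1)$. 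By the McDuff--Polterovich criterion (\cite[Cor.~2.1.D]{MP}), producing a K\"ahler (hence tame symplectic) form in the class $\al$ yields a symplectic embedding of $\coprod_j \ov B^4(a_j)$ into $(T^4,\om)$, which is exactly what we want.

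The second step is to verify the three hypotheses of the Buchdahl--Lamari theorem for $\al$ on $(\Tilde T,\Tilde J)$. The self-intersection condition is a direct computation: $\al^2 = [\om]^2 - \sum_j a_j^2\,[\Si_j]^2 = 2\Vol(T^4,\om) - \sum_j a_j^2 > 0$ by the volume assumption, using $[\Si_j]^2 = -1$ and that the $\Si_j$ are disjoint and disjoint from $\pi^*$-classes. For the second condition I would exhibit a positive closed $(1,1)$-form $\rho$ on $\Tilde T$ with $\al\cup[\rho] > 0$: take a K\"ahler form $\rho_\eps$ on the blow-up in a class $\pi^*[\om] - \eps\sum_j\PD[\Si_j]$ for $\eps>0$ small (such forms exist by the standard symplectic blow-up construction, since $[\om]$ is K\"ahler on $T$), and compute $\al\cup[\rho_\eps] = 2\Vol(T^4,\om) - \eps\sum_j a_j$, which is positive for $\eps$ small. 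The crucial and genuinely substantive step is the third condition: $\al\cdot[D] > 0$ for every effective divisor $D \subset \Tilde T$. This is where Proposition~\ref{prop:no} does the work. Any irreducible curve $C \subset \Tilde T$ either is one of the exceptional divisors $\Si_j$, or maps to a nonconstant compact holomorphic curve in $T$ under $\pi$. Since $T$ has \emph{no} such curves, the only irreducible curves in $\Tilde T$ are the $\Si_j$ themselves, and possibly their proper transforms --- but there are no curves in $T$ to take proper transforms of. Hence every effective divisor $D$ is a nonnegative combination $D = \sum_j m_j \Si_j$ with $m_j \ge 0$ not all zero, and
$$
\al\cdot[D] \,=\, -\sum_j m_j a_j\bigl([\Si_j]^2\bigr)\cdot(-1) \;\text{— wait, let me restate:}\; \al\cdot[\Si_j] = -a_j[\Si_j]^2 = a_j > 0,
$$
so $\al\cdot[D] = \sum_j m_j a_j > 0$. (One should double-check the sign convention so that the exceptional divisors have positive pairing with $\al$; with $[\Si_j]^2 = -1$ and $\al = \pi^*[\om] - \sum_j a_j\PD[\Si_j]$ one gets $\al\cdot[\Si_j] = a_j > 0$ as claimed.)

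The main obstacle is making the ``no curves in $\Tilde T$ other than the $\Si_j$'' argument airtight: one must be careful that an irreducible curve $C$ in the blow-up with $\pi(C)$ a point must be contained in some $\Si_j$ (true since the $\Si_j$ are the fibers of $\pi$ over the blown-up points and are irreducible divisors), and that if $\pi(C)$ is not a point then $\pi(C)$ is an irreducible compact holomorphic curve in $T$, contradicting Proposition~\ref{prop:no}. A minor wrinkle is that Proposition~\ref{prop:no} as stated concerns $T$ rather than the blow-up, so this reduction of divisors on $\Tilde T$ to curves on $T$ must be spelled out; but it is standard. I expect the self-intersection and $\rho$-positivity computations to be routine, and the whole argument to occupy well under a page once Proposition~\ref{prop:no} is granted. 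Finally I would note that this proves both parts of Theorem~\ref{t:2} in the irrational case at once, since an irrational torus is never symplectomorphic to any $T(\mu,\mu)$ (the latter being rational), so only part~(i) is relevant and it holds with no extra restriction on the $a_j$.

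$$
\text{(End of proof sketch.)}
$$
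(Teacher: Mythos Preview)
Your approach is the paper's: use Proposition~\ref{prop:no} to realize the irrational torus as a K\"ahler manifold $(T,J,\om_J)$ with no curves, pass to the $k$-fold complex blow-up, and verify the Buchdahl--Lamari criterion for $\al = \pi^*[\om_J] - \sum_j a_j\PD[\Si_j]$. Your checks of $\al^2>0$, of $\al\cup[\rho_\eps]>0$, and of $\al\cdot[\Si_j]=a_j>0$ are correct, as is your reduction of the effective-divisor condition to the exceptional divisors (any irreducible curve in $\Tilde T$ projects either to a point, hence lies in some $\Si_j$, or to a nonconstant curve in $T$, which does not exist).

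There is one genuine gap in your final step. You assert that by \cite[Cor.~2.1.D]{MP} a K\"ahler form on the blow-up in class $\al$ directly yields an embedding of $\coprod_j\ov B^4(a_j)$ into $(T^4,\om)$. But blowing down the K\"ahler form $\tau$ produces a symplectic form $\eta$ on $T$ in class $[\om_J]$ together with the required balls in $(T,\eta)$, \emph{not} in $(T,\om_J)$. To transplant the balls you must show that $\eta$ is isotopic to $\om_J$. For rational or ruled $4$-manifolds this would be automatic, but on $T^4$ it is open whether arbitrary cohomologous symplectic forms are even diffeomorphic (cf.~\S\ref{s:quest}, item~1). The paper fills this gap by running the Buchdahl--Lamari argument for the whole family of classes $\al_s = \pi^*[\om_J]-\sum_j sa_j\PD[\Si_j]$, $s\in(0,1]$, obtaining K\"ahler forms $\tau_s$; then, using convexity of the space of $J_k$-tame forms in each fixed cohomology class, it deforms these to a smooth family $\tau_s'$ that for small $s$ literally agrees with the standard K\"ahler blow-up of $\om_J$ by balls of size $sa_j$. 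Blowing down gives a smooth family $\eta_s$ of cohomologous symplectic forms on $T$ with $\eta_1=\eta$ and $\eta_s=\om_J$ for small $s$, and Moser's argument finishes. This step is not hard, but it is not optional, and your sketch omits it.
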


\begin{proof}  
We must show that any disjoint union $\coprod_{j=1}^k \ov B\,\!^4(a_j)$ 
of balls symplectically embeds into~$(T^4,\om)$ provided only that 
the volume constraint is satisfied.  
By Proposition~\ref{prop:no} there is a symplectomorphism 
from $(T^4,\om)$ to~$(T,\om_J)$ 
where $(T,J,\om_J)$ is the K\"ahler torus found in Proposition~\ref{prop:no}.
Let $(T_k,J_k)$ be the complex blow-up of~$T$ at~$k$ generic points~$x_j$, 
and 
consider the cohomology class 
$\al := \pi^*([\omega_J]) - \sum_{j=1}^k a_j \PD [\Sigma_j]$, where
$\pi \colon T_k\to T$ is the blow-down map and the $\Sigma_j$ are the exceptional divisors.
Since the complex structure~$J$ and the symplectic form~$\omega_J$ on~$T$ have constant coefficients, 
we find for each $j$ an $\varepsilon_j >0$ and an embedding $\psi^j \colon B^4(\varepsilon_j) \to T$
with $\psi^j(0) = x_j$ that is holomorphic and symplectic
(i.e., $\psi^j_* J_{\can} = J$ and $\psi^j_* \omega_{\can} = \omega$),
see \cite[Exercise~2.52\,(iii)]{MS}.
Take $\varepsilon >0$ such that $\varepsilon \le \varepsilon_j$ for each $j=1,\dots,k$ and such that
$\int_{T} \omega_J \wedge \omega_J > \sum_{j=1}^k \varepsilon a_j$.
Let $\rho$ be the K\"ahler form on $T_k$ corresponding to the blow-up defined by the $k$ embeddings
$\psi^j \colon B^4(\varepsilon) \to T$. 
Then $[\rho] = \pi^*([\omega_J]) - \sum_{j=1}^k \varepsilon \PD [\Sigma_j]$,
whence $\alpha \cup [\rho] >0$.
Furthermore, the volume condition gives~$\al^2>0$,
and the only compact holomorphic curves in~$(T_k,J_k)$ are the~$\Sigma_j$.  
The criterion of Buchdahl and Lamari thus holds for the class~$\al$.
Therefore there is a K\"ahler form $\tau$ on~$(T_k,J_k)$ in class~$\al$. 
Blowing down the form~$\tau$ we obtain a symplectic form~$\eta$ on~$T$ 
in class~$[\omega_J]$, and disjoint symplectically embedded balls 
$\ov B\,\!^4(a_1), \dots, \ov B\,\!^4(a_k)$ in $(T,\eta)$.
It remains to show that $\eta$ is isotopic to $\omega_J$.
Then Moser's argument shows that $(T,\eta)$ and $(T,\omega_J)$ 
are symplectomorphic. 
Hence the balls $\coprod_j \ov B\,\!^4(a_j)$ also symplectically embed into~$(T,\om_J)$.

To find an isotopy from $\eta$ to $\omega_J$,
we consider for each $s \in (0,1]$
the cohomology class 
$\al_s := \pi^*([\omega_J]) - \sum_{j=1}^k s a_j \PD [\Sigma_j]$,
and repeat the above construction to find K\"ahler forms~$\tau_s$ on $(T_k,J_k)$
in class~$\alpha_s$, $s \in (0,1]$. 
Moreover, choose $s_0>0$ such that $s_0 a_j < \varepsilon$ for all~$j$,
and for $s \in (0,s_0)$ let $\rho_s$ be the K\"ahler form on $T_k$ corresponding to the blow-up defined by the $k$ embeddings
$\psi^j \colon B^4(s_0 a_j) \to T$. 
Denote by $\Omega (J_k,\{\Sigma_j\})$ the space of $J_k$-tame symplectic forms
on~$T_k$ that restrict to 
symplectic forms on the~$\Sigma_j$.
Then each form~$\tau_s$, $s \in (0,1]$, 
and each form $\rho_s$, $s \in (0,s_0)$, 
belongs to~$\Omega (J_k,\{\Sigma_j\})$.
Since the space of forms in $\Omega (J_k,\{\Sigma_j\})$ 
in a given cohomology class is convex, 
we can alter the collection $\{\tau_s\}$ to a smooth family $\{\tau_s'\}$
of forms in $\Omega (J_k,\{\Sigma_j\})$ such that $\tau_s'$ 
is cohomologous to~$\tau_s$, such that $\tau_s' = \rho_s$ 
for $s \in (0,s_0/2)$, and such that $\tau_1'=\tau_1$.
Now blow down the forms $\tau_s'$ to obtain a smooth family~$\eta_s$
of symplectic forms on~$T$ in class $[\om_J]$.
By construction, $\eta_1 = \eta$ and $\eta_s = \omega_J$ for $s \in (0,s_0/2)$,
as required.
\end{proof}

\begin{remark} \label{rem:K3}
{\rm
The only property of irrational linear symplectic forms on~$T^4$ that
we used here was the existence of a compatible complex structure~$J$
such that $H^2(T;\ZZ) \cap H^{1,1}(T;\CC)$ vanishes, i.e.,
such that the Picard number of $(T,J)$ vanishes.
According to the Enriques--Kodaira classification, there is exactly
one other class of compact K\"ahler surfaces with this property,
namely K3-surfaces with Picard number~$0$.
Repeating the above proof, we find that the conclusions of
Theorem~\ref{t:2} also hold true for all 
K\"ahler forms on K3-surfaces for which a compatible complex structure has 
vanishing Picard number.
}
\end{remark}

\subsection{Irrational tori with no curves}\label{ss:no}

We now prove Proposition \ref{prop:no}.

We begin by finding complex tori with no nonconstant compact holomorphic curves.
Consider $\C^2=\C e_1 \oplus \C e_2$ and denote by $e_1$, $e_2$, $e_3=\sqrt{-1}\,e_1$, $e_4=\sqrt{-1}\,e_2$ the standard real basis.
Choose real numbers $p,q,r,s$ such that 
\begin{gather}\label{eq:LaP}
p,\, q,\, r,\, s \,\mbox{ are rationally independent, and } 
ps-qr \, \mbox{ is irrational}. 
\end{gather}
Consider the quotient of $\C^2$ by the lattice $\Lambda_P$ spanned by
$$
\lambda_1 = e_1, \;\;
\lambda_2 = e_2, \;\;
\lambda_3 = p \,e_3+ q \,e_4, \;\;
\lambda_4 = r \,e_3 + s \,e_4.
$$

The following result is 
extracted from the Appendix in~\cite{ElencwajgForster}.  
We repeat it here for the sake of completeness.

\begin{lemma} \label{l:no}  
Under the assumption \eqref{eq:LaP},
the torus $T=\C^2/\La_P$ contains no nonconstant compact holomorphic curves. 
\end{lemma}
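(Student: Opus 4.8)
The plan is to argue by contradiction: suppose $C \subset T = \C^2/\La_P$ is a nonconstant compact holomorphic curve. Passing to an irreducible component, we may assume $C$ is irreducible, and after normalizing we get a nonconstant holomorphic map $f \colon \widetilde C \to T$ from a compact Riemann surface $\widetilde C$. The homology class $[C] \in H_2(T;\Z) \cong \Lambda^2 \Z^4$ is then represented by a positive $(1,1)$-current, so it lies in $H^{1,1}$, and moreover it is nonzero since pulling back a suitable translation-invariant K\"ahler form gives a positive number. The strategy is to show that \eqref{eq:LaP} forces every nonzero integral class in $H^{1,1}(T;\C) \cap H^2(T;\Z)$ to have negative or zero self-intersection and to fail positivity against the K\"ahler cone, i.e.\ that the Picard/N\'eron--Severi group is too small to contain the class of an effective curve. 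Concretely, I would compute $H^{1,1}(T)$ explicitly in terms of $p,q,r,s$.

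First I would set up coordinates. Using the real basis $e_1,e_2,e_3 = \sqrt{-1}\,e_1, e_4 = \sqrt{-1}\,e_2$, dualize to get real coordinates $x_1,x_2,x_3,x_4$ on $\C^2$, and write the $\Z$-basis of $H^1(T;\Z)$ dual to $\lambda_1,\lambda_2,\lambda_3,\lambda_4$. A translation-invariant $2$-form on $T$ is of type $(1,1)$ iff it is invariant under the complex structure $J$ (which sends $e_1 \mapsto e_3$, $e_2 \mapsto e_4$). So the plan is: take a general integral $2$-form $\eta = \sum_{i<j} a_{ij}\, \lambda_i^* \wedge \lambda_j^*$ with $a_{ij} \in \Z$, rewrite it in the $e_i^*$ coordinates (this is where the numbers $p,q,r,s$ enter, via the change-of-basis matrix expressing the $\lambda_i^*$ in terms of the $e_i^*$), and impose $J$-invariance. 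This gives a system of linear equations in the $a_{ij}$ with coefficients that are polynomials in $p,q,r,s$. The rational independence of $p,q,r,s$ together with the irrationality of $ps - qr$ should force all $a_{ij}$ to vanish — or more precisely, force $H^{1,1}(T;\C) \cap H^2(T;\Z) = 0$, i.e.\ the Picard number is $0$.

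Once the Picard number is zero, the conclusion is immediate: the nonzero class $[C]$ would have to lie in $H^{1,1} \cap H^2(T;\Z) = 0$, a contradiction. (Alternatively, if one only gets a very restricted N\'eron--Severi lattice rather than zero, one uses that an irreducible curve $C$ with $C^2 < 0$ in a torus cannot exist since the canonical bundle is trivial and adjunction gives $C^2 = 2g(\widetilde C) - 2 \geq -2$ with $C^2$ even, forcing $C^2 \in \{-2, 0\}$; a $(-2)$-curve is rational, impossible in a torus which has no rational curves, and $C^2 = 0$ with $[C] \neq 0$ still needs $[C] \in H^{1,1}$, which we excluded.) I expect the main obstacle to be purely bookkeeping: carefully tracking the change of basis and verifying that the resulting linear system over $\Q$ has only the trivial integral solution precisely under hypothesis \eqref{eq:LaP}. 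This is a finite linear-algebra computation, but one has to choose the parametrization cleanly so that the role of "$p,q,r,s$ rationally independent" and "$ps-qr$ irrational" both appear transparently — the first killing the "generic" $(1,1)$-conditions and the second ruling out the one remaining potential integral class (the analogue of the class $e_1^*\wedge e_3^* + e_2^*\wedge e_4^*$ that survives for product tori).
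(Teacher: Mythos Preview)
Your proposal is correct and follows essentially the same strategy as the paper: reduce to showing $H^{1,1}(T;\C)\cap H^2(T;\Z)=\{0\}$ via an explicit linear-algebra computation in which the irrationality of $ps-qr$ and the rational independence of $p,q,r,s$ each kill part of the system. The only organizational difference is that the paper parametrizes the $4$-dimensional space of real $(1,1)$-forms (with unknowns $x,y,u,v$) and then imposes integrality of $\om(\lambda_i,\lambda_j)$, rather than starting from integral $\lambda_i^*\wedge\lambda_j^*$ and imposing $J$-invariance; the paper's direction keeps the number of unknowns smaller and makes the two number-theoretic hypotheses enter very cleanly (the $v$- and $v(ps-qr)$-entries immediately give $v=0$, and eliminating $x,y$ from the remaining four equations yields a single integer relation $-n_1r+n_2p-n_3s+n_4q=0$).
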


\begin{proof}
A nonconstant compact holomorphic curve 
would represent a nonzero class in $H_{1,1}(T;\C) \cap H_2(T;\Z)$.
By duality it thus suffices to prove that $H^2(T;\Z) \cap H^{1,1}(T;\C)=\{0\}$. 
Write the complex coordinates of $\C^2$ as $z_j=x_j+\sqrt{-1}\,y_j$.
Since every class in $H^2(T;\C)$ can be represented by a form with constant coefficients,
every class in $H^{1,1}(T;\C)$ has a representative of the form
%
\begin{equation}\label{eq:om}
\om = x\, dx_1\wedge dy_1 + y\, dx_2\wedge dy_2 + u(dx_1\wedge dy_2+dx_2\wedge dy_1) + v(dx_1\wedge dx_2+dy_1\wedge dy_2),
\end{equation}
where the coefficients $x,y,u,v$ are complex constants. The class $[\om]$ will be integral if and only if the coefficients of the matrix
\begin{equation}\label{eq:matrix}
\om(\lambda_i,\lambda_j) \,=\, 
\begin{pmatrix}
0 & v & px+qu & rx+su \\
-v& 0 & pu+qy & ru+sy \\
-(px+qu) & -(pu+qy) & 0 & v(ps-qr) \\
-(rx+su) & -(ru+sy) & -v(ps-qr) & 0
\end{pmatrix} ,
\end{equation}
are integers, i.e.\/ if
 $$
\begin{array}{ll}
{\rm (i)}&\quad v,\, v(ps-qr) \in \ZZ; \\
{\rm (ii)}&\quad px+qu,\; r x+su, \; pu+qy,\; r u+sy \in \ZZ .
\end{array}
$$
Since we have chosen $ps-qr$ irrational, the two conditions~(i) imply $v=0$.
Assume that $x,y,u$ fulfill the conditions~(ii).
We then find $n_1,n_2,n_3,n_4 \in \ZZ$ with
$$
\begin{array} {llllllllll}
px+qu       &=& n_1,   &&&&& pu+qy      &=& n_3,  \\
r x + su &=& n_2,   &&&&& r u +sy &=& n_4 .
\end{array}
$$
We can eliminate $x$ and $y$ from the above equations and obtain
\begin{eqnarray*}
(ps-qr)u &=& -n_1 r + n_2 p \\
(ps-qr)u &=& \phantom{-}n_3s-n_4q,
\end{eqnarray*}
which implies $-n_1r + n_2p - n_3s + n_4q =0$.
Since we have chosen $p, q, r, s$ to be rationally independent, 
it follows that $n_1,n_2,n_3,n_4$ must vanish. 
Hence $u$ and therefore also $x$ and $y$ vanish. We conclude that $\om=0$, as we wanted to show.
\end{proof}


Now we start with a torus $T^4= \R^4/\Lambda$ with a linear symplectic form~$\om$ 
representing an irrational cohomology class. 
Given an integral basis $\lambda_1,\dots,\lambda_4$ for~$\Lambda$, 
the symplectic form~$\omega$ can be represented by a matrix $B=(b_{ij})$,   
where $b_{ij}=\om(\lambda_i,\lambda_j)$.
We denote by $\lambda_1^*, \dots, \lambda_4^*$ the basis dual to $\lambda_1,\dots,\lambda_4$, 
and we may assume that the ordering has been chosen such that $\om \wedge \om$ is a positive multiple 
of $\lambda_1^* \wedge \lambda_3^* \wedge \lambda_2^* \wedge \lambda_4^*$.

\begin{lemma}\label{l:om}   
In the situation just described, after changing the basis of~$\R^4$ by an element of~$\SL(4,\Z)$, 
we may represent~$\om$ by a matrix~$B'$ where
\begin{itemize}
\item[(i)]
 the entries $b'_{12}$, $b'_{34}$ either both vanish or they are  rationally independent and positive, 
 and 
\item[(ii)]  
the vector $(b'_{13}, b'_{14}, b'_{23}, b'_{24})$ is not a multiple of a rational vector.
\end{itemize}
\end{lemma}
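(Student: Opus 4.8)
The idea is to exploit the freedom of acting on the lattice by $\SL(4,\Z)$, which changes the Gram matrix $B$ to $M^T B M$ for $M \in \SL(4,\Z)$, and to choose $M$ so that the resulting matrix $B'$ has the required genericity of entries. Write out $B = (b_{ij})$ and recall that $\om\wedge\om > 0$ forces $\mathrm{Pf}(B) = b_{12}b_{34} - b_{13}b_{24} + b_{14}b_{23} > 0$ (with the sign convention fixed by the ordering $\lambda_1^* \wedge \lambda_3^* \wedge \lambda_2^* \wedge \lambda_4^*$). The irrationality hypothesis on $[\om]$ means the $\Q$-vector space spanned by the $b_{ij}$ has dimension at least $2$; equivalently, at least two of the $b_{ij}$ are rationally independent. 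I would first normalize using a symplectic-type change of basis so that the "diagonal block" entries $b_{12}$ and $b_{34}$ are either simultaneously zero or both strictly positive: this uses elementary $\SL(4,\Z)$ operations mixing $\lambda_1$ with $\lambda_2$ and $\lambda_3$ with $\lambda_4$ to adjust $b_{12}$ by integer combinations of the other entries in the relevant $2\times 2$ minors, plus a possible reflection to fix signs.

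**Key steps.** (1) Reduce to a normal form in which the cross entries $b_{13},b_{14},b_{23},b_{24}$ are "large" relative to the block entries — concretely, use the Euclidean-algorithm-style $\SL(4,\Z)$ moves that are standard in bringing a $4\times 4$ skew matrix toward a symplectic normal form, but stop before completing the reduction so as to retain irrational data. (2) Split into two cases according to whether $[\om]$ is "half-integral in a product direction" or not. In the generic case, after the reduction one can arrange that $(b_{13},b_{14},b_{23},b_{24})$ is genuinely not proportional to a rational vector — this is where the rank-$\geq 2$ irrationality assumption is spent, since if this $4$-vector were a rational multiple of a rational vector, then combined with the block entries we could only have one rationally independent quantity among all $b_{ij}$, contradicting irrationality unless the irrational part hides in $b_{12}$ or $b_{34}$; one then does a further integral shear swapping a block entry with a cross entry to move the irrationality into the cross block. (3) Having secured (ii), return to (i): apply an $\SL(2,\Z)\times\SL(2,\Z)$-type change of basis (acting separately on $\span(\lambda_1,\lambda_2)$ and $\span(\lambda_3,\lambda_4)$) that, by the divisibility/positivity of the Pfaffian, can be used to bring $b'_{12}$ and $b'_{34}$ into the desired form — either both zero (when the original block entries are rationally dependent on the cross entries in a way that lets them be killed) or both positive and rationally independent (by adding suitable integer multiples of cross-block entries, which are themselves rationally independent after step (2)).

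**Main obstacle.** The delicate point is the bookkeeping in step (2)–(3): I must verify that the $\SL(4,\Z)$ moves used to achieve (ii) do not destroy the option of later arranging (i), and conversely that arranging (i) by block operations does not re-introduce a rational proportionality in the cross block. This requires tracking how the five quantities $b_{12}, b_{34}, b_{13}, b_{14}, b_{23}, b_{24}$ transform and checking that rational independence is preserved or created at each stage — essentially a careful case analysis on the $\Q$-linear span of the entries of $B$, organized by whether this span, intersected with the "block coordinates" versus the "cross coordinates", is zero or not. The positivity of the Pfaffian is used throughout to guarantee that the block entries can be made positive rather than forced to have opposite signs, and the sign-normalization of the ordering of the $\lambda_i^*$ is what makes this sign choice consistent. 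I expect the full argument to be a moderately long but elementary linear-algebra-over-$\Z$ computation, with the conceptual content entirely in setting up the right case division.
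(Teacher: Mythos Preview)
Your overall approach --- elementary $\SL(4,\Z)$ basis changes to manipulate the entries of $B$ --- is the same as the paper's, but your plan is considerably more elaborate than what is actually needed, and a couple of your guiding claims are off.

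First, the paper's proof is short and direct, and proceeds in the \emph{opposite} order to yours: it arranges (i) first, then (ii). Concretely: if some complementary pair $b_{i_1i_2}, b_{i_3i_4}$ both vanish, permute so that $b'_{12}=b'_{34}=0$; then (ii) is automatic from irrationality. Otherwise, permute so that $b_{12},b_{34}\ne 0$ and at least two of $b_{12},b_{34},b_{1j}$ (with $j\in\{3,4\}$) are rationally independent; a single shear $\lambda_2^*\mapsto \lambda_2^*+k\lambda_j^*$ replaces $b_{12}$ by $b_{12}+kb_{1j}$ while fixing $b_{34}$, so a suitable $k$ makes $b'_{12},b'_{34}$ rationally independent of the same sign, and swapping within each pair fixes the sign if needed. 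For (ii), if the cross vector $(b_{13},b_{14},b_{23},b_{24})$ is a multiple of a rational vector, it is nonzero (else we are in the first case), so some entry, say $b_{13}$, is rationally independent of $b_{12}$ or $b_{34}$; then the single shear $\lambda_4^*\mapsto \lambda_4^*+k\lambda_1^*$ makes $b'_{13},b'_{24}$ rationally independent (hence (ii) holds) while one can also choose the sign of $k$ so that (i) is preserved. That is the whole proof --- no Euclidean-algorithm reduction, no ``half-integral'' case split, and the interference you worry about in your ``main obstacle'' is handled by the trivial observation that the final shear moves $b_{34}$ by a multiple of $b_{13}$ only.

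Second, two specific points you should correct. With the paper's orientation convention $\lambda_1^*\wedge\lambda_3^*\wedge\lambda_2^*\wedge\lambda_4^*$, the positive quantity is $b_{13}b_{24}-b_{14}b_{23}-b_{12}b_{34}$, i.e.\ the \emph{negative} of the standard Pfaffian; your sign is reversed. More substantively, this positivity is \emph{not} what makes $b'_{12},b'_{34}$ simultaneously positive --- that is achieved by the explicit shear-and-swap above, with no appeal to $\om\wedge\om>0$. The positivity of $b_{13}b_{24}-b_{14}b_{23}-b_{12}b_{34}$ is instead used afterwards (see the paper's Remark following the lemma) to conclude $b'_{13}b'_{24}-b'_{14}b'_{23}>0$, which feeds into the later construction of the compatible complex structure. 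So your step (3), which leans on the Pfaffian to force positivity of the block entries, is misdirected.
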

\begin{proof}    
Suppose first that there is a permutation $i_1,\dots,i_4$ of $\{1,\dots ,4\}$ so that
$b_{i_1i_2} = b_{i_3i_4}=0$. Then we can change basis (preserving orientation)
so that $b_{12}' = b_{34}'=0$. 
Condition~(ii) is then automatic since $\om$ is irrational. 

In all other cases
we can permute the basis (preserving orientation) so that $b_{12} \ne 0$, $b_{34} \ne 0$, 
and so that at least two of
the elements $b_{12}, b_{34}, b_{1j}$ are rationally independent, 
where $j=3$ or~$4$.  
If condition~(i) is not satisfied, 
we change basis by replacing  $\lambda_2^*$ by $\lambda_2^*+k\lambda_j^*$
and leaving the other elements fixed.
Then $b_{12}$ changes to $b_{12}'=b_{12} + kb_{1j}$ and $b_{34}'=b_{34}$, so that 
for suitable $k\in \Z$ we may assume that $b_{12}'$, $b_{34}'$ are rationally independent 
and of the same sign. 
If they are both negative, we can change their signs by 
interchanging $\lambda_1$ and $\lambda_2$ and interchanging $\lambda_3$ and $\lambda_4$.  
This achieves~(i).

If (ii) does not hold, we may assume that
the vector $\beta=(b_{13}, b_{14}, b_{23}, b_{24})$ does not vanish,
because otherwise by a permutation we could have arranged the situation with $b_{12}'=b_{34}'=0$ 
from the beginning of the proof, and as observed there (ii) is automatic in that case.

So one of the entries of $\beta$ must be nonzero and hence rationally independent of either  
$b_{12}$ or $b_{34}$. We will consider the case that $b_{13}$ and $b_{12}$ are rationally independent, 
the other cases being treated in a similar fashion. 
Now we change basis, replacing  $\lambda_4^*$ by $\lambda_4^*+k\lambda_1^*$  
and leaving the other elements fixed.  
Then $b_{ij}' = b_{ij}$ if $i,j\ne 4$, while $b_{i4}'=b_{i4}-kb_{1i}$.
In particular,  
$$
b_{12}'=b_{12}, \qquad  b_{34}' = b_{34} - k b_{13}, \qquad b_{13}'=b_{13}, \qquad
b_{24}'= b_{24}-kb_{12}.
$$
Hence (ii) holds if $k\ne 0$, since $b_{13}', b_{24}'$ are rationally independent.
Further (i) will hold if we choose $k$ so that $- k b_{13}>0$.

The proof in the other cases is similar.  
In particular if $b_{13}=0$ but $b_{14}\ne 0$ we use a base change that alters $\lambda_3^*$ 
instead of~$\lambda_4^*$.  
\end{proof}
 
\begin{remark} \label{rem:det}
{\rm 
Note that 
$$
\om \wedge \om \,=\, (b_{13}b_{24}-b_{14}b_{23}-b_{12}b_{34})\, \lambda_1^*\wedge\lambda_3^*\wedge\lambda_2^*\wedge\lambda_4^*.
$$
Since the base change was orientation preserving, the coefficient is still positive, 
and so in the new basis for~$\Lambda$ constructed in Lemma~\ref{l:om} we necessarily have $b_{13}'b_{24}'-b_{14}'b_{23}'>0$, since $-b_{12}'b_{34}'\le 0$ by~(i).
}
\end{remark}

\begin{proof}[Proof of Proposition \ref{prop:no}.]
We are given a torus $T^4=\R^4/\Lambda$ with a linear irrational symplectic form. 
We assume that we have chosen a basis $\lambda_1,\dots,\lambda_4$ for~$\Lambda$ 
such that the matrix~$B$ determined from $\om(\la_i,\la_j) = b_{ij}$ satisfies 
the conditions stated in Lemma~\ref{l:om}. 
Our goal is to identify~$\Lambda$ with a suitable period lattice~$\Lambda_P$ 
of the form discussed in Lemma~\ref{l:no}, where the coefficients $p,q,r,s$ 
are still to be determined. 
Moreover, we want that under this identification the form~$\om$ is as in~\eqref{eq:om} for suitable constants $x,y,u,v\in \R$, and is compatible with the standard complex structure~$J_0$ on~$\C^2$.    

With respect to the real standard basis $e_1,e_2,e_3=\sqrt{-1}\,e_1,e_4= \sqrt{-1}\,e_2$ of $\C^2$ the symmetric bilinear form~$g$ associated to a symplectic form $\om$ as in \eqref{eq:om} and the standard complex structure $J_0$ is represented by the matrix
$$
(g_{ij}) \,=\,\om(e_i,J_0e_j) \,=\,
\left(\begin{array}{cccc}
x & u & 0 & -v\\
u & y & v & 0\\
0 & v & x & u\\
-v & 0 & u & y
\end{array}\right).
$$
The compatibility of $\om$ with $J_0$ requires this matrix to be positive definite, and this holds if and only if all leading principal minors are positive. This will be the case if and only if
\begin{equation}\label{eq:positive}
x>0 \quad \mbox{ and } \quad xy-u^2-v^2>0,
\end{equation}
since the other two conditions $xy-u^2>0$ and $(xy-u^2-v^2)^2$ 
are then necessarily also satisfied.

To construct the lattice $\Lambda_P$ and find the coefficients of $\om$, a comparison with equation~\eqref{eq:matrix} of the proof 
of Lemma~\ref{l:no} shows that we want to solve the equations
$$
\begin{array}{llll}
b_{12} &= \;v, & \qquad
b_{13}  &= \; px + qu, \\
b_{14} &= \; rx +su, & \qquad
b_{23} &= \; pu+qy, \\
b_{24} &= \; ru+sy, & \qquad
b_{34} &= \; v(ps-qr).
\end{array}
$$
The middle four equations can be rewritten as
\begin{equation}\label{eq:mat}
\left(\begin{array}{ccc} 
q & p & 0 \\ s & r & 0\\ p & 0 & q\\ r & 0 & s
\end{array}\right) 
\left(\begin{array}{c} u\\x\\y\end{array}\right)
=
\left(\begin{array}{c} b_{13}\\ b_{14}\\ b_{23}\\ b_{24} \end{array}\right).
\end{equation}
Here the vector on the right hand side is given and nonzero. 
For fixed $p$, $q$, $r$, $s$ 
with $ps-qr \neq 0$
this overdetermined system of equations will have 
a solution $(u,x,y)$ if the compatibility condition
\begin{equation}\label{eq:comp}
r\, b_{13} -p\, b_{14} = q\, b_{24} - s\, b_{23}
\end{equation}
is satisfied.

\MS 
\begin{lemma}\label{l:choose}
If the vector $(b_{13}, b_{14}, b_{23} , b_{24})$ is not a multiple of a rational vector, 
there exists a solution $(p',q',r',s') \in \R^4$ of~\eqref{eq:comp} in rationally independent 
real numbers satisfying $s'b_{13}-q'b_{14} >0$ and $D := p's'-q'r'>0$. 
\end{lemma}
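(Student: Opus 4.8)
\emph{Proof proposal.}
The plan is to realize the solution set of~\eqref{eq:comp} as a $3$-dimensional linear subspace $H\subset\R^4$, to exhibit a nonempty open subset $U\subseteq H$ on which the two sign conditions hold, and then to observe that the points of $H$ with rationally independent coordinates are dense in $H$ and hence meet~$U$. Since~\eqref{eq:comp} is a homogeneous linear equation in $(p,q,r,s)$, its solution set
$$
H \,=\, \bigl\{\, (p,q,r,s)\in\R^4 \;\big|\; r b_{13}-p b_{14} = q b_{24}-s b_{23} \,\bigr\}
$$
is a linear subspace whose orthogonal complement is spanned by $n:=(-b_{14},-b_{24},b_{13},b_{23})$, and since $(b_{13},b_{14},b_{23},b_{24})\neq 0$ we have $\dim H = 3$. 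Recall from Remark~\ref{rem:det} that $b_{13}b_{24}-b_{14}b_{23}>0$; in particular $(b_{13},b_{14})\neq(0,0)$.

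To see that the open set $U := \{(p,q,r,s)\in H \mid s b_{13}-q b_{14}>0,\ ps-qr>0\}$ is nonempty, I would write down an explicit point of it. A direct computation shows that
$$
(p,q,r,s) \,=\, \bigl(\, b_{13}+b_{24},\ -b_{14},\ b_{14}-b_{23},\ b_{13}\,\bigr)
$$
satisfies~\eqref{eq:comp}, and at this point $s b_{13}-q b_{14} = b_{13}^2+b_{14}^2$ while $ps-qr = b_{13}^2+b_{14}^2+(b_{13}b_{24}-b_{14}b_{23})$, both of which are positive by Remark~\ref{rem:det}. Thus $U\neq\emptyset$, and as it is cut out by strict polynomial inequalities it is open in $H$.

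It remains to arrange rational independence. A point $(p,q,r,s)\in\R^4$ has rationally dependent coordinates exactly when $m\cdot(p,q,r,s)=0$ for some $m\in\Z^4\setminus\{0\}$, so the set of such points inside $H$ is $\bigcup_{m\in\Z^4\setminus\{0\}}(H\cap m^{\perp})$. This is where the hypothesis enters: since $(b_{13},b_{14},b_{23},b_{24})$, and therefore also $n$, is not a scalar multiple of a rational vector, $n$ is not parallel to any nonzero integer vector, so $H\not\subseteq m^{\perp}$ for every $m\in\Z^4\setminus\{0\}$; consequently each $H\cap m^{\perp}$ is a proper, hence at most $2$-dimensional, linear subspace of the $3$-dimensional space $H$. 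A countable union of such subspaces has Lebesgue measure zero in $H$, so its complement is dense in $H$ and therefore meets the nonempty open set $U$. Any point of $U$ with rationally independent coordinates is the required $(p',q',r',s')$. The one point that needs care is this last step: one must be sure that the rationally independent locus is genuinely dense in $H$ rather than empty, and this is exactly what the assumption that $(b_{13},b_{14},b_{23},b_{24})$ is not a multiple of a rational vector secures, via the observation that $H$ is contained in no rational hyperplane.
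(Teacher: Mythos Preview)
Your proof is correct and follows essentially the same strategy as the paper's: identify the solution set of~\eqref{eq:comp} as a hyperplane~$H$, exhibit an explicit point satisfying both strict inequalities, and use the hypothesis that the normal vector of~$H$ is not a rational multiple to ensure that the rational-dependence locus is a countable union of proper subspaces of~$H$, hence nowhere dense. The only difference is the choice of witness point: the paper uses the simpler $(p,q,r,s)=(b_{13},b_{23},b_{14},b_{24})$, for which both $sb_{13}-qb_{14}$ and $ps-qr$ equal $b_{13}b_{24}-b_{14}b_{23}$ directly, whereas your point $(b_{13}+b_{24},-b_{14},b_{14}-b_{23},b_{13})$ works too but requires a small extra computation.
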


\begin{proof}
The inequalities $sb_{13}-qb_{14}>0$ and $D>0$ define an open set in $\R^4$, 
which we denote by~$\mathcal O$.
Similarly, for given $b_{ij}$ the equation~\eqref{eq:comp} defines a hyperplane~$H$ in~$\R^4$. 
The intersection $\mathcal O \cap H$ is not empty, since in view of Remark~\ref{rem:det} 
the point with coordinates $p=b_{13}$, $q=b_{23}$, $r=b_{14}$ and $s=b_{24}$ belongs to it. 

On the other hand, a point $(p,q,r,s)$ has rationally dependent coordinates if and only if 
it solves some equation $n_1p+n_2q+n_3r+n_4s=0$ with integral coefficients~$n_i$. 
Since its defining vector is not a (multiple of a) rational vector, 
$H$ is transverse to this 
countable set of hyperplanes, and so there is some point in the open subset $\mathcal O \cap H$ 
that does not lie on any of these hyperplanes. 
This point has the desired properties.
\end{proof}

Given $(p',q',r',s')$ as in the lemma, the solution of the matrix equation~\eqref{eq:mat} is
\begin{eqnarray*}
x'=\frac 1 {D}(s'b_{13}-q'b_{14}), \qquad 
y'=\frac 1 {D}(p'b_{24}-r'b_{23}),\\
u'=\frac 1 {D}(p'b_{14}-r'b_{13})= \frac 1 {D}(s'b_{23}-q'b_{24}).
\end{eqnarray*}

If $b_{12}=b_{34}=0$, the final two equations 
$$
b_{12}=v, \qquad b_{34}= v(p's'-q'r'),
$$
have the trivial solution $v=0$. 
In this case choose $\rho >0$ such that $\rho^2 (p'q'-r's')$ is irrational,
and define $(p,q,r,s) := \rho (p',q',r',s')$ and $(u,x,y) := \rho^{-1} (u',x',y')$.

If $b_{12}, b_{34}$ are rationally independent, 
we need to rescale the above solution so that $ps-qr=\frac {b_{34}}{b_{12}}$. 
Therefore, define 
$(p,q,r,s) := \rho (p',q',r',s')$ and $(u,x,y) := \rho^{-1} (u',x',y')$,
where $\rho := \sqrt{\frac {b_{34}}{b_{12} D}}$. 
Notice that $ps-qr = \frac {b_{34}}{b_{12}}$ is 
automatically
irrational by part~(i) of Lemma~\ref{l:om} in this case.
Now, choosing $v=b_{12}$, all six equations are satisfied.
 
With this construction, we have found a lattice $\Lambda_P$ 
and coefficients $x,y,u,v\in \R$ such that,
after identifying $\Lambda$ with $\Lambda_P$
by mapping the basis vectors~$\lambda_j$ of~$\Lambda$ to the basis vectors~$\lambda_j$ of~$\Lambda_P$,
the symplectic form $\om$ is as in \eqref{eq:om}. 
To check the positivity condition~\eqref{eq:positive}, 
note that $x>0$ by construction, and a computation shows that
$xy-u^2-v^2$ equals a positive multiple of $b_{13}b_{24}-b_{14}b_{23}-b_{12}b_{34}$, 
which was observed to be positive in Remark~\ref{rem:det}. 
In summary, we have shown that $\om$ is compatible with the standard complex structure~$J_0$ on $\C^2$. 
Finally, the lattice $\Lambda_P$ by construction satisfies the assumptions of Lemma~\ref{l:no}, and so we have proven the proposition.
\end{proof}


\subsection{Product tori} \label{s:muge2}

In this section, we complete the proof of Theorem~\ref{t:2}  
by treating the case of product tori~$T(1,\mu)$.
We can assume that $\mu \ge 1$.
If $\mu$ is irrational, Theorem~\ref{t:2} has been proven in 
Sections~\ref{ss:irrat} and~\ref{ss:no}.
If $\mu = 1$ or $\mu \ge 2$ is rational,
Theorem~\ref{t:2} follows from 
Proposition~\ref{p:prod.balls} below
(notice that for $\mu \ge 2$ the condition on the individual~$a_j$ and~$b_j$ 
appearing in this proposition are automatic from volume considerations). 
Finally, if $\mu \in (1,2)$ is rational, 
Theorem~\ref{t:2} follows from the case of rational~$\mu \ge 2$ as in the proof of Corollary~\ref{c:mu}.

\begin{proposition} \label{p:prod.balls}
Let $\mu \ge 1$ be rational and 
let $\ov B(a_1), \dots,\ov B(a_k)$ be a collection of balls such that
$$
\Vol \Bigl( \coprod_{j=1}^k \ov B(a_j) \Bigr) \,<\, \mu 
$$
and such that $a_j < \mu$ for all~$j$.
Then there exists a symplectic embedding of $\coprod_{j=1}^k \ov B(a_j) $ 
into the open disc bundle $T^2(1) \times D^2(\mu)$.
\end{proposition}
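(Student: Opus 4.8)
The plan is to reduce the ball-packing problem for the open disc bundle $T^2(1)\times D^2(\mu)$ to a known ball-packing result for the closed ruled manifold $T^2(1)\times S^2(\mu)$, using the fact that $D^2(\mu)$ symplectically embeds into $S^2(\mu)$ (as the complement of a point) together with the additional requirement that the packing can be arranged to avoid a constant section. First I would recall Biran's theorem \cite{B1,B2}: the ruled surface $T^2(1)\times S^2(\mu)$ admits a full symplectic packing by any finite collection of balls whose total volume is less than $\Vol\bigl(T^2(1)\times S^2(\mu)\bigr)=\mu$; in particular, under our volume hypothesis $\sum_j\frac{a_j^2}{2}<\mu$, the disjoint union $\coprod_j\ov B(a_j)$ embeds into $T^2(1)\times S^2(\mu)$. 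The task is then to perturb such an embedding so that its image is disjoint from a fixed fiber-constant section $S:=T^2(1)\times\{\pt\}$, because then $\coprod_j\ov B(a_j)$ lands in $\bigl(T^2(1)\times S^2(\mu)\bigr)\setminus S \cong T^2(1)\times D^2(\mu)$, which is what we want.

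The key step is the disjointness argument, and this is where I expect the real work to lie. The obstruction is that the images of the balls are compact and could a priori meet every section of the trivial $S^2$-bundle. To get around this, I would use the standard inflation/isotopy techniques for ruled $4$-manifolds (McDuff, Biran): choose a fibered almost complex structure $J$ on $T^2(1)\times S^2(\mu)$ tamed by the packing form and for which the sections $T^2(1)\times\{\pt\}$ are $J$-holomorphic; since the ball images have been made symplectic, one can after a compactly supported isotopy of the packing assume they are disjoint from a fixed $J$-holomorphic section $S_0$ (using that through a generic point in the complement of finitely many balls there passes a holomorphic section, and positivity of intersections forces such a section to be disjoint from the balls once their capacities are small enough near $S_0$; the hypothesis $a_j<\mu$ is exactly the room needed here). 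Alternatively — and this may be cleaner — I would go back into Biran's construction itself: he obtains the packing by decomposing $T^2(1)\times S^2(\mu)$ into a disc bundle over $S$ plus a neighbourhood of the exceptional/fiber data, and the balls can be placed in a region that by construction misses one chosen section. I would phrase the proposition's proof as: \emph{``We shall show in the course of proving Proposition \ref{p:muge2} (see Section \ref{s:muge2}) that Biran's packing of $T^2(1)\times S^2(\mu)$ can be taken disjoint from a constant section $T^2(1)\times\{\pt\}$, provided each $a_j<\mu$.''}

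Finally, once the balls lie in the complement of $S$, I would invoke the symplectomorphism $\bigl(T^2(1)\times S^2(\mu)\bigr)\setminus\bigl(T^2(1)\times\{\pt\}\bigr)\cong T^2(1)\times D^2(\mu)$ — this is just the identity on the $T^2(1)$ factor times the area-preserving identification of $S^2(\mu)$ minus a point with the open disc $D^2(\mu)$ — to conclude that $\coprod_j\ov B(a_j)$ embeds symplectically into $T^2(1)\times D^2(\mu)$. The main obstacle, to repeat, is the avoidance-of-a-section step; everything else (the volume bookkeeping, the identification of $S^2(\mu)\setminus\pt$ with $D^2(\mu)$, the reduction for $\mu\in(1,2)$ via Lemma \ref{l:linalg}) is routine. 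I would expect the detailed argument for the avoidance step to occupy the bulk of Section \ref{s:muge2} and to rely on holomorphic curve techniques in the ruled surface rather than on any new idea.
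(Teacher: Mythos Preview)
Your overall strategy matches the paper's: embed the balls in $T^2(1)\times S^2(\mu)$ via Biran and then arrange the packing to miss a constant section $Z=T^2\times\{z_0\}$, so that it lies in $T^2(1)\times D^2(\mu)$. However, your first proposed mechanism for the avoidance step does not work as stated: positivity of intersections applies between $J$-holomorphic curves, not between a holomorphic section and symplectically embedded balls, so you cannot deduce disjointness that way. Your second alternative --- re-running Biran's inflation construction with the section built in --- is what the paper actually does, and is where the substance lies.

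Concretely, the paper constructs a Moser family $\om_s$ by inflating on the $k$-fold blow-up $M_k$ along a class $A=n\bigl(\PD[\pi^*\om_0]-\sum_j a_j'E_j\bigr)$, but restricts throughout to almost complex structures $J\in\cj'$ that are standard near the lifted section $\widetilde Z$ and the exceptional divisors, so that $\widetilde Z$ remains $J$-holomorphic for every $J$ used. The technical wrinkle you have not identified is that in Biran's analogous argument in~\cite{B2} the distinguished curve has \emph{negative} self-intersection, whereas here $[\widetilde Z]^2=0$. The paper handles this by a maximum-principle argument (any $J'$-curve trapped in $U(\widetilde Z)$ must be a torus leaf $\pi^{-1}(T^2\times\{z\})$) together with the dimension count
\[
k_{\rm cusp}(A)\ \le\ k\bigl(A-m[\widetilde Z]\bigr)\ =\ k(A)-mA\cdot[\widetilde Z]\ <\ k(A),
\]
using $c_1(\widetilde Z)=0$, $[\widetilde Z]^2=0$ and $A\cdot[\widetilde Z]>0$. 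Finally, the hypothesis $a_j<\mu$ does not enter via any intersection argument with the section; it is the standing hypothesis in Biran's packing theorem for $T^2(1)\times S^2(\mu)$, and it is used again to choose rational $a_j'\in(a_j,\mu)$ so that the inflation class $A$ is integral.
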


\proof
Denote by $S^2(\mu)$ the $2$-sphere endowed with an area form of area~$\mu$.
Biran has shown in \cite[Proof of Corollary~5.C]{B1} that 
$\coprod_{j=1}^k \ov B(a_j)$ 
symplectically embeds into $T^2(1) \times S^2(\mu)$ whenever 
$\Vol \Bigl( \coprod_{j=1}^k \ov B(a_j) \Bigr) < \mu$ 
and $a_j < \mu$ for all~$j$.
We will arrange this embedding in such a way that the balls lie in 
$T^2(1) \times (S^2(\mu) \less z_0) = T^2(1) \times D^2(\mu)$, where $z_0 \in S^2$.
Such a construction has been carried out in~\cite{B2} in a slightly different situation.
We shall outline the construction, pointing out the difference.

A symplectic embedding of 
$\coprod_{j=1}^k \ov B(a_j) $ into $T^2(1) \times (S^2(\mu) \less z_0)$ 
is obtained by constructing
a smooth family of cohomologous forms 
$\om_s$, $s \in [0,1]$, on $T^2 \times S^2$ with the following properties:
\begin{itemize}
\item $\om_0$ is the product form on $T^2(1)\times S^2(\mu)$;
\item each $\om_s$ is nondegenerate on the torus $Z : = T^2 \times \{z_0\}$;
\item for each $s\in [0,1]$ there is a symplectic embedding of 
$\coprod_{j=1}^k s \ov B(a_j)$ into $\bigl( (T^2 \times S^2) \less Z , \omega_s \bigr)$. 
\end{itemize}
For then a standard Moser argument shows that there is a family of diffeomorphisms 
$\psi_s \colon (T^2\times S^2, Z)\to (T^2\times S^2, Z)$ 
such that $\psi_1^* \om_1= \om_0$.
Therefore $\coprod_{j=1}^k  \ov B(a_j)$  symplectically embeds into 
$\bigl( T^2\times (S^2 \less z_0), \om_0 \bigr) = T^2(1)\times D^2(\mu)$.

The family $\om_s$ is constructed in much the same way as in the proof of 
Theorem~\ref{t:2} for irrational tori. 
In other words, the problem is converted into one of constructing 
suitable forms~$\tau_s$ on the $k$-fold blow-up.  
The only difference is that we can no longer find the required forms~$\tau_s$ on the blow-up via the Buchdahl--Lamari criterion; 
instead we must use symplectic inflation as in~\cite{LM.class, M-isotopy, B1}. 
In order for $Z$ to be $\tau_s$-symplectic, it suffices to work only with almost complex structures~$J$ 
for which $Z$ is $J$-holomorphic. 

More precisely, let $(M,\omega_0) := T^2(1) \times S^2(\mu)$, 
and choose different points $x_1, \dots, x_k$ in~$M \less Z$.
Let $J_0$ be the standard product complex structure on~$M$, 
and choose neighborhoods $U(Z) = T^2 \times D(z_0)$, 
$B_\varepsilon(x_1), \dots, B_\varepsilon(x_k)$ with disjoint closures.
At each point $x_j$ form the K\"ahler blow-up of size~$\varepsilon$.
Denote the resulting K\"ahler manifold by 
$(M_k, J_k, \tau_\varepsilon)$.
Let $\Sigma_1, \dots, \Sigma_k$ be the exceptional divisors, and let $E_j$ be the 
homology class of~$\Sigma_j$.
Note that $\pi \colon M_k \less \bigcup_j \Sigma_j \to M \less \bigcup_j x_j$ 
is a diffeomorphism, 
and denote $\pi^{-1}(Z)$ and $\pi^{-1}(U(Z))$ by $\widetilde Z$ and $U(\widetilde Z)$.
Choose mutually disjoint neighborhoods $U(\Sigma_j)$ that are also 
disjoint from~$\widetilde Z$.
Let $\cj$ be the space of $\tau_\varepsilon$-tame almost complex structures on~$M_k$, 
and denote by $\cj'$ the subspace of those~$J$ in~$\cj$ that restrict to~$J_k$ on 
$U(\widetilde Z) \cup U(\Sigma_1) \cup \dots \cup U(\Sigma_k)$.
As in~\cite{B1, B2},
the class $[\omega_0] \in H^2 (T^2 \times S^2;\RR)$ is rational by assumption.
Choose rational numbers $a_j' \in (a_j, \mu)$ such that
$\Vol \Bigl( \coprod_{j=1}^k \ov B(a_j) \Bigr) < \mu$.
We then find
$n \in \NN$ such that 
\begin{equation} \label{e:inflationclass}
A \,:=\, n \biggl( \PD [\pi^* \omega_0] - \sum_{j=1}^k a_j' E_j \biggr)
\end{equation}
belongs to $H_2(M_k;\ZZ)$.
In order to construct the forms~$\tau_s$ we wish to inflate the form~$\tau_\varepsilon$ 
along the class~$A$.
To this end we need to represent~$A$ by a smooth connected embedded reduced 
$J$-holomorphic curve~$C$ for some $J \in \cj'$.
To find such a curve~$C$ one can closely follow the proof of 
Lemma~2.2.B in~\cite{B2}, where, however, 
the curve~$\widetilde Z$ has negative self-intersection.
This causes no problem if one observes that, by the maximum principle, 
for every $J' \in \cj'$, 
every $J'$-holomorphic curve that is entirely contained in $U(\widetilde Z)$ 
must be (a multiple cover of) a torus $\pi^{-1} (T^2 \times \{z\})$ 
with $z \in D(z_0)$.
For $B \in H_2(M_k;\ZZ)$, 
the maximal number of generic points through which, for generic $J \in \cj$,
a $J$-holomorphic curve can pass, is
$k(B) := \frac 12 \bigl( B \cdot B + c_1(B) \bigr)$. 
Let 
\begin{equation} \label{e:cusp}
A \,=\, \sum_j m_j A_j + m [\widetilde Z], \quad m \ge 1,
\end{equation}
be an $A$-cusp configuration with $m \ge 1$.
Denote by $k_{\text{cusp}}(A)$ the maximal number of generic points 
through which, for a generic subset of $J'$ in~$\cj'$,
a $J'$-cusp-curve with configuration~\eqref{e:cusp} can pass.
As in \cite[pp.~148--151]{B2},
the last step in the existence proof for the curve~$C$ 
is to understand that $k_{\text{cusp}}(A) < k(A)$.
To see this, 
consider the class $\bar A := A - m[\widetilde Z]$.
In view of~\eqref{e:inflationclass}, the class~$A$ is not a multiple of $[\widetilde Z]$, 
and hence the class~$\bar A$ is non-trivial.
Biran showed in the proof of Lemma~2.2.B of~\cite{B2} that for generic $J' \in \cj'$,
no (cusp-)curve in class~$\bar A$ can pass through more than $k(\bar A)$ generic points.
Since in the definition of $k_{\text{cusp}}(A)$ we may consider points 
outside~$U(\widetilde Z)$ only, it follows that $k_{\text{cusp}}(A) \le k(\bar A)$.
Moreover, 
using $c_1(\widetilde Z)=0$, $[\widetilde Z] \cdot [\widetilde Z] =0$
and $A \cdot [\widetilde Z] = n \int_Z \omega_0 >0$, $m \ge 1$,
we compute
$$
k(\bar A) \,=\, k(A) - m \bar A \cdot [\widetilde Z] 
          \,=\, k(A) - m A \cdot [\widetilde Z] \,<\, k(A) .
$$
Altogether, $k_{\text{cusp}}(A) \le k(\bar A) < k(A)$.
\proofend


\section{Basic symplectic mappings} \label{s:basic}

\ni
In this section we describe an elementary symplectic embedding construction.
It will be applied in Section~\ref{s:t1} to prove that $p\bigl(T(1,1)\bigr)=1$.
We write $\RR^2(\bx) := \RR^2(x_1,x_2)$ and $\RR^2(\by) := \RR^2(y_1,y_2)$.

\subsection{Diamonds} 

Consider the ``diamond'' of size $a$
$$
\Diamond (a) \,:=\, \left\{ (x_1,x_2) \in \RR^2(\bx) \mid |x_1|+|x_2| < \tfrac a2 \right\}
\;\subset\; \RR^2(\bx) ,
$$
see Figure~\ref{fig.dist}~(I).  

\begin{lemma} \label{l:alpha} 
For each $\gve >0$ the ball $B^4(a)$ symplectically embeds into
$\Diamond (a+\gve) \times (0,1)^2 \subset \RR^2(\bx) \times \RR^2(\by)$.
\end{lemma}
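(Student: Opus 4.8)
The plan is to build the embedding by hand; no soft existence argument seems available, since the target $\Diamond(a+\gve)\times(0,1)^2$ has volume $\tfrac12(a+\gve)^2$, only just above $\Vol B^4(a)=\tfrac12 a^2$, so the embedding must be essentially volume‑filling and cannot be routed through any ``fat'' intermediate domain such as a polydisc $P(a+\gve,a+\gve)$. I would begin with two harmless reductions. First, it is enough to treat $a<2$, which is the only range of interest (and where the diamond $\Diamond(a+\gve)$ is actually smaller than the disc $B^2(a+\gve)$, so that the $\by$‑directions must genuinely be used). Second, it suffices to embed each closed sub‑ball $\overline{B^4(b)}$, $b<a+\gve$, into the \emph{open} set $\Diamond(a+\gve)\times(0,1)^2$; an embedding of the open ball $B^4(a)$ itself then follows by exhausting $B^4(a)$ by such closed sub‑balls and using that the target is open. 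This reduction also sidesteps the mild issue that the coordinates used below are adapted only to the complement of a complex line and an angular ray in $B^4(a)$.

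The construction itself I would carry out in two stages. \emph{Stage 1 (reshaping one factor).} Apply to the first $\CC$‑factor a symplectomorphism of $\RR^2(x_1,y_1)$ which carries the disc $B^2(a)$ onto a long thin right triangle with one leg of length $1$ along the $y_1$‑axis, arranged so that the circles $\{\pi|z_1|^2=s\}$ go to lines parallel to the hypotenuse. Since the solid ball is $\{\pi|z_1|^2+\pi|z_2|^2<a\}$, after this symplectomorphism it becomes a domain of the form
$$
\bigl\{(x_1,y_1,z_2)\;:\;(x_1,y_1)\in\triangle,\ \ \pi|z_2|^2<\kappa(x_1,y_1)\bigr\},
$$
where $\kappa$ is affine in the ``hypotenuse coordinate'' of $\triangle$ and vanishes on the hypotenuse; note that $y_1\in(0,1)$ is already achieved, and the $z_2$‑disc degenerates linearly as $(x_1,y_1)$ approaches the hypotenuse. \emph{Stage 2 (a fibrewise symplectic shear).} Post‑compose with a symplectomorphism of generating‑function type, of the schematic form $z_2\mapsto z_2+(\text{affine function of }(x_1,y_1))$ together with the compensating adjustment of $(x_1,y_1)$ forced by the closed‑form condition; choosing this shear appropriately, the conjunction of the triangle constraint on $(x_1,y_1)$ and the fibre constraint $\pi|z_2|^2<\kappa(x_1,y_1)$ is repackaged as the single diamond inequality $|x_1|+|x_2|<\tfrac12(a+\gve)$ in the $\bx$‑plane, while the remaining freedom of $z_2$ is absorbed into $y_2\in(0,1)$. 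The small slack $\gve$ is precisely what gets consumed in unwinding the angular direction implicit in Stage~1.

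The step I expect to be the main obstacle is the second one: one has to choose the shear and carry out the two‑dimensional area bookkeeping so that the nested‑disc domain really lands inside $\Diamond(a+\gve)\times(0,1)^2$ with loss only $\gve$, and so that the resulting map is globally a symplectomorphism rather than merely volume preserving or area preserving on slices. (The latter is what makes the diamond, as opposed to a triangle, unavoidable: a slice‑preserving map cannot turn the moment triangle $\{I_1,I_2\ge 0,\ I_1+I_2\le a\}$ of the ball into a diamond, so the reshaping must genuinely move $(x_1,y_1)$ and $z_2$ together.) Once the shear is pinned down and these checks are made, composing Stages~1 and~2 and undoing the initial reductions yields the required symplectic embedding $B^4(a)\hookrightarrow\Diamond(a+\gve)\times(0,1)^2$.
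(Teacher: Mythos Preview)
Your proposal is not a complete proof: Stage~2 is described only schematically, and you yourself flag the verification there as ``the main obstacle'' without carrying it out. More seriously, the whole approach rests on a misconception. You assert that the reshaping ``must genuinely move $(x_1,y_1)$ and $z_2$ together'' because a factor-preserving map cannot achieve the diamond constraint. This is false, and it sends you down a much harder path than necessary. (The reduction to $a<2$ is also irrelevant: the construction works uniformly in~$a$.)

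The paper's proof uses precisely a product map $\sigma \times \sigma$, where $\sigma \colon D(a) \to \RR^2$ is a single two-dimensional area-preserving embedding into $\bigl(-\tfrac{a+\gve}{2},\tfrac{a+\gve}{2}\bigr)\times(0,1)$ with the property
\[
|x(\sigma(z))| \,<\, \tfrac12\,\pi|z|^2 + \tfrac{\gve}{2}\qquad\text{for all }z\in D(a).
\]
(Picture the disc being unfolded so that the concentric circle enclosing area~$s$ lands in the vertical strip $|x|<\tfrac{s+\gve}{2}$, with $y\in(0,1)$.) Then for $(z_1,z_2)\in B^4(a)$ one has $\pi|z_1|^2+\pi|z_2|^2<a$, and hence
\[
|x_1(\sigma(z_1))|+|x_2(\sigma(z_2))| \,<\, \tfrac12\bigl(\pi|z_1|^2+\pi|z_2|^2\bigr)+\gve \,<\, \tfrac{a}{2}+\gve,
\]
which is exactly the diamond condition, while $y_1,y_2\in(0,1)$ comes for free. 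No shear mixing the factors is needed; the argument is a few lines once~$\sigma$ is written down. Your error seems to be conflating ``product map'' with ``map preserving the action-angle slicing'': a product $\sigma\times\sigma$ certainly need not send level sets of $I_j=\pi|z_j|^2$ to level sets of $I_j$ --- here the whole point is that it sends them into sublevel sets of~$|x_j|$, which is what converts the ball inequality into the diamond inequality.
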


\proof
Let $D(a) \subset \RR^2(z) = \RR^2(x,y)$ be the open disc of area~$a$.
Choose an area and orientation preserving embedding
$$
\sigma \colon D(a) \,\to\, \left( -\frac{a+\gve}{2}, \frac{a+\gve}{2} \right) \times (0,1)
$$ 
such that
\begin{equation} \label{ine:xz}
\bigl| x(\sigma (z))\bigr| \,<\, \frac 12 \pi |z|^2 + \frac \gve 2 \quad \mbox{ for all }\, z \in D(a).
\end{equation}
Figure~\ref{fig.diam} shows such an embedding.
For details we refer to Lemma~3.1.8 of~\cite{Sch-book}.

%
\begin{figure}[h]
 \begin{center}
  \psfrag{x}{$x$}
  \psfrag{y}{$y$}
  \psfrag{z}{$z$}
  \psfrag{1}{$1$}
  \psfrag{ga}{$\sigma$}
  \psfrag{a-}{$-\frac{a+\gve}2$}
  \psfrag{a+}{$\frac{a+\gve}2$}
  \leavevmode\epsfbox{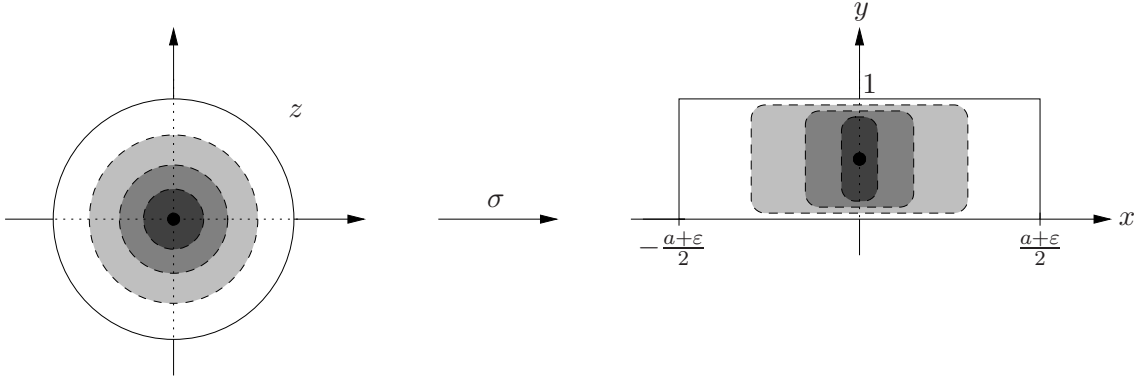}
 \end{center}
 \caption{The map $\sigma$.}
 \label{fig.diam}
\end{figure}

\m
\ni
We claim that the symplectic embedding $\sigma \times \sigma \colon D(a) \times D(a) \to \RR^4$
maps $B^4(a)$ to $\Diamond (a+\varepsilon) \times (0,1)^2$.
Indeed, for $(z_1,z_2) \in B^4(a)$ we have $\pi (|z_1|^2+|z_2|^2) < a$.
Together with~\eqref{ine:xz} we can estimate
\begin{eqnarray*}
\bigl| x_1 ((\sigma \times \sigma) (z_1,z_2) )\bigr| + \bigl| x_2 ((\sigma \times \sigma) (z_1,z_2) )\bigr|
&=& 
\bigl| x_1 (\sigma (z_1) )\bigr| + \bigl| x_2 (\sigma (z_2) )\bigr| \\
&<&
\frac 12 \left( \pi |z_1|^2 + \pi |z_2|^2 \right) + \gve \\ 
&<& \frac a2 + \gve ,
\end{eqnarray*}
as claimed.
\proofend

\begin{cor}\label{cor:alpha}
The open ball $B^4(a)$ is symplectomorphic to
$\Diamond (a) \times (0,1)^2 \subset \RR^2(\bx) \times \RR^2(\by)$.
\end{cor}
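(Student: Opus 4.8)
The plan is to deduce Corollary~\ref{cor:alpha} from Lemma~\ref{l:alpha} together with the easy reverse inclusion. Lemma~\ref{l:alpha} already gives, for every $\gve>0$, a symplectic embedding $B^4(a) \hookrightarrow \Diamond(a+\gve) \times (0,1)^2$. Conversely, I would show directly that $\Diamond(a) \times (0,1)^2$ symplectically embeds into $B^4(a)$ — in fact that it is the image of $B^4(a)$ under a symplectomorphism — by running a variant of the construction in Lemma~\ref{l:alpha} ``in reverse.'' The key point is that the map $\sigma \times \sigma$ built from an area-preserving $\sigma \colon D(a) \to (-\tfrac a2,\tfrac a2)\times(0,1)$ satisfying the \emph{sharp} inequality $|x(\sigma(z))| < \tfrac12\pi|z|^2$ (the $\gve=0$ version of~\eqref{ine:xz}) is a symplectomorphism of $D(a)\times D(a)$ onto its image, and the same chain of inequalities as in the proof of Lemma~\ref{l:alpha} shows it carries $B^4(a)$ \emph{into} $\Diamond(a)\times(0,1)^2$. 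For the opposite inclusion I would use the inverse map: a point of $\Diamond(a)\times(0,1)^2$ in the image of $\sigma\times\sigma$ has preimage $(z_1,z_2)$ with $|x_i(\sigma(z_i))| \ge$ some lower bound forcing $\pi(|z_1|^2+|z_2|^2) < a$; more simply, I would choose $\sigma$ so that it maps $D(a)$ \emph{onto} a suitable region and set up the equivalence so that the image of $B^4(a)$ is exactly $\Diamond(a)\times(0,1)^2$.

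Concretely, the cleanest route is probably this: fix an area- and orientation-preserving diffeomorphism $\sigma \colon D(a) \to \sigma(D(a)) \subset \RR^2(x,y)$ whose image is an open subset of $(-\tfrac a2,\tfrac a2)\times(0,1)$ and which satisfies $|x(\sigma(z))| = \tfrac12\pi|z|^2 \cdot h(z)$ for an appropriate correction, arranged so that for all $(z_1,z_2)$ one has the equivalence
\[
\pi(|z_1|^2+|z_2|^2) < a \iff |x_1(\sigma(z_1))| + |x_2(\sigma(z_2))| < \tfrac a2 .
\]
Such a $\sigma$ can be extracted from the same construction referenced in Lemma~3.1.8 of~\cite{Sch-book}; the point is that along the boundary circle $\pi|z|^2 = c$ the function $x\circ\sigma$ should range over (essentially) all of $(-\tfrac c2, \tfrac c2)$ and, for a product, the $\ell^1$-sum then ranges over $(-\tfrac a2,\tfrac a2)$ precisely when the radii satisfy $\pi(|z_1|^2+|z_2|^2)<a$. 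Then $\sigma\times\sigma$ is a symplectomorphism $D(a)\times D(a) \to \sigma(D(a))\times\sigma(D(a))$ carrying $B^4(a)$ onto exactly $\Diamond(a)\times(0,1)^2$ (one still needs the $y$-coordinates to fill $(0,1)^2$, which holds since each $\sigma$ is onto in the $y$-direction when restricted appropriately, or one composes with a further area-preserving rearrangement in each $\RR^2(x_i,y_i)$ factor).

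The main obstacle I anticipate is this ``onto'' refinement: Lemma~\ref{l:alpha} only needs a one-sided estimate and an $\gve$ of slack, whereas Corollary~\ref{cor:alpha} asserts an exact symplectomorphism with no loss. One must therefore choose $\sigma$ so that the inequality~\eqref{ine:xz} becomes an equality of the form $|x(\sigma(z))| = \tfrac12\pi|z|^2$ (or is close enough that a further explicit correction achieves equality), and simultaneously control the $y$-image so that $(0,1)^2$ is filled exactly rather than just contained. This is a concrete but slightly delicate planar construction; once the correct model map $\sigma$ is pinned down, the verification that $\sigma\times\sigma$ sends $B^4(a)$ bijectively onto $\Diamond(a)\times(0,1)^2$ is the same short computation as in Lemma~\ref{l:alpha}, read as a chain of equivalences rather than implications. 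An alternative, possibly slicker, route would be to avoid constructing $\sigma$ afresh and instead combine Lemma~\ref{l:alpha} with a limiting/exhaustion argument plus the elementary fact that $\Diamond(a) = \bigcup_{\gve>0}\Diamond(a-\gve)$ and $B^4(a)$ is an increasing union of closed balls; but turning such a limit into an honest symplectomorphism (rather than just equality of Gromov widths) again forces one back to an explicit model, so I expect the direct construction above to be the most efficient.
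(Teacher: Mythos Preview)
Your primary route --- building a single area-preserving diffeomorphism $\sigma \colon D(a) \to (-\tfrac a2, \tfrac a2) \times (0,1)$ so that $\sigma\times\sigma$ carries $B^4(a)$ \emph{onto} $\Diamond(a)\times(0,1)^2$ --- hits a genuine obstruction, not merely a delicacy. Suppose such a surjective $\sigma$ existed. The desired equivalence
\[
\pi|z_1|^2+\pi|z_2|^2 < a \;\Longleftrightarrow\; |x(\sigma(z_1))| + |x(\sigma(z_2))| < \tfrac a2
\]
says that for each fixed $z_1$ the sublevel set $\{z_2 : |x(\sigma(z_2))| < \tfrac a2 - |x(\sigma(z_1))|\}$ depends only on $\pi|z_1|^2$; since $|x\circ\sigma|$ is onto $[0,\tfrac a2)$, distinct thresholds give distinct sublevel sets, so $|x(\sigma(z))|$ itself depends only on $\pi|z|^2$. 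But then on each circle $\pi|z|^2=c>0$ the continuous function $x\circ\sigma$ has constant nonzero absolute value, hence constant sign; by connectedness of $D(a)\setminus\{0\}$ the sign is globally constant, contradicting surjectivity onto a rectangle symmetric about $x=0$. The same argument rules out any product $\sigma_1\times\sigma_2$, so the ``further area-preserving rearrangement in each factor'' you suggest cannot rescue it.

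The paper instead takes exactly the ``alternative'' you raise at the end and then set aside: it combines Lemma~\ref{l:alpha} with an exhaustion argument, namely Lemma~\ref{l:nest}. That lemma says that if $V\subset\RR^4$ is a bounded domain such that every compact $K\subset V$ lies in the image of some symplectic embedding $B^4(\hat a)\to V$ with $\hat a<a$, then $V$ is symplectomorphic to $B^4(a)$. The construction in Lemma~\ref{l:alpha} (choosing $\sigma$ with small enough $\gve$) supplies such embeddings for $V=\Diamond(a)\times(0,1)^2$. The point you were missing is what makes the limit an honest symplectomorphism rather than just an equality of capacities: the proof of Lemma~\ref{l:nest} invokes McDuff's theorem that the space of symplectic embeddings of a closed $4$-ball into an open $4$-ball is connected, which allows one to nest the successive embeddings and take their union. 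No explicit model map is needed.
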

\begin{proof} This follows by combining the above lemma with Lemma~\ref{l:nest} below.
\end{proof}

\begin{lemma} \label{l:nest}
Let $V \subset \RR^4$ be a bounded domain such that for each compact subset
$K \subset V$
there exists $\hat a < a$ and a symplectic embedding 
$\hat \varphi \colon B^4(\hat a) \to V$
such that $\Ima \hat\varphi \supset K$.
Then $V$ is symplectomorphic to~$B^4(a)$.
\end{lemma}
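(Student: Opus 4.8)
The plan is to exhaust $V$ by an increasing sequence of compact sets and patch together the symplectic embeddings of balls of capacity close to $a$ provided by the hypothesis, using the standard ``conjugation'' trick that turns two nested embeddings of balls into a symplectomorphism of a smaller ball onto a larger one. First I would fix an increasing sequence $K_1 \subset K_2 \subset \cdots$ of compact subsets of $V$ with $\bigcup_i K_i = V$ and $K_i \subset \intt K_{i+1}$, and choose an increasing sequence $a_i \nearrow a$ with $a_i < a$. Applying the hypothesis, for each $i$ I get $\hat a_i < a$ and a symplectic embedding $\hat\varphi_i \colon B^4(\hat a_i) \to V$ with $\Ima \hat\varphi_i \supset K_i$; after shrinking, I may assume $\hat a_i$ is nondecreasing, and by rescaling the domain disc I can also arrange a symplectic embedding $\varphi_i \colon B^4(a_i) \to V$ with $\Ima \varphi_i \supset K_i$ (and $\varphi_i(B^4(a_i)) \subset B^4(\hat a_i) \hookrightarrow V$ via the previous one, once $a_i \le \hat a_{i+1}$ say — the indices can be arranged by passing to a subsequence).

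The key construction is the following. Given $\varphi_i \colon B^4(a_i) \to V$ and $\varphi_{i+1} \colon B^4(a_{i+1}) \to V$ with $\Ima \varphi_{i+1} \supset K_{i+1} \supset \Ima\varphi_i|_{B^4(a_i')}$ for a slightly smaller $a_i' $ — more precisely, so that $\varphi_i(\ov B^4(a_i')) \subset \Ima \varphi_{i+1}$ — the map $\psi_i := \varphi_{i+1}^{-1}\circ\varphi_i \colon B^4(a_i') \to B^4(a_{i+1})$ is a symplectic embedding of one ball into another, with $a_i' < a_{i+1}$. By Gromov's theorem on symplectic embeddings of balls (equivalently, the fact that the space of such embeddings is connected, or the elementary fact that a symplectic embedding $B^4(b)\hookrightarrow B^4(c)$ with $b<c$ is isotopic through symplectic embeddings to the inclusion), $\psi_i$ extends, after a further infinitesimal shrinking of its domain, to a symplectomorphism $\Psi_i$ of $B^4(a_{i+1})$ onto itself, or at least to a symplectic embedding $B^4(a_i'') \to B^4(a_{i+1})$ agreeing with $\psi_i$ on a neighborhood of $\ov B^4(a_i''')$ for $a_i''' < a_i''$. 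Using these $\Psi_i$ one builds a direct-limit symplectomorphism: define $\Phi \colon B^4(a) \to V$ by declaring, on the exhausting pieces $B^4(a_i''')$ of $B^4(a)$, that $\Phi$ agrees with $\varphi_i$ (after conjugating all the $\varphi_i$ by the composed $\Psi$'s so that they become compatible on overlaps). The resulting $\Phi$ is a well-defined symplectic embedding of $B^4(a)$ into $V$, and it is surjective because $\Ima\Phi \supset \varphi_i(B^4(a_i''')) \supset K_i$ for all $i$, hence $\Ima\Phi \supset \bigcup_i K_i = V$.

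The main obstacle — and the one place where a genuine theorem rather than bookkeeping is needed — is the extension step: turning the symplectic embedding $\psi_i \colon B^4(a_i') \hookrightarrow B^4(a_{i+1})$ into a global symplectomorphism of a slightly smaller ball with which one can perform the limit. This rests on the connectedness of the space of symplectic ball embeddings (a consequence of Gromov's work, and implicit in the blow-up correspondence of McDuff--Polterovich invoked earlier in the paper). Everything else is careful indexing: arranging the sequences $a_i, a_i', a_i'', a_i''' $ and the conjugating diffeomorphisms so that the maps agree on the overlaps $B^4(a_i''') \subset B^4(a_{i+1}''')$, which forces one to pass to subsequences of $\{K_i\}$ so that $\varphi_i(\ov B^4(a_i''')) $ is compactly contained in $\Ima \varphi_{i+1}$. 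I would organize this as: (1) set up the exhaustion and the sequence $a_i \nearrow a$; (2) invoke the hypothesis and rescale to get $\varphi_i \colon B^4(a_i) \to V$ with $\Ima\varphi_i \supset K_i$; (3) pass to a subsequence so that $\varphi_i(\ov B^4(\tilde a_i)) \Subset \Ima \varphi_{i+1}$ for suitable $\tilde a_i \nearrow a$; (4) apply the extension/isotopy result to each $\varphi_{i+1}^{-1}\circ\varphi_i$ and inductively conjugate the $\varphi_i$ to a compatible family; (5) take the direct limit to obtain the symplectomorphism $\Phi \colon B^4(a) \to V$.
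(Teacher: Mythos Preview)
Your proposal is correct and follows essentially the same approach as the paper: exhaust $V$ by compacts, use the hypothesis to get ball embeddings covering each compact, invoke the connectedness of the space of symplectic embeddings of a closed ball into an open ball (from McDuff~\cite{M-blowup}) to make successive embeddings agree on overlaps, and take the direct limit. The paper's write-up is somewhat more streamlined---rather than conjugating by extensions~$\Psi_i$, it simply modifies each $\varphi_{j+1}$ directly so that $\varphi_{j+1}|_{B^4(a_j')} = \varphi_j|_{B^4(a_j')}$---but this is the same idea expressed with less bookkeeping.
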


\proof
Choose a sequence $K_1 \subset K_2 \subset \dots$ of compact subsets of~$V$
such that $\bigcup_j K_j = V$.
Using the assumption of the lemma and the result from~\cite{M-blowup} that
the space of symplectic embeddings of a closed ball into an open ball is connected,
we construct a sequence $a_1'<a_1 < a_2'<a_2 < \dots$ with $a_j \to a$
and a sequence of symplectic embeddings $\varphi_j \colon B^4(a_j) \to V$
such that  
$\varphi_j (B^4(a_j')) \supset K_j$ and $\varphi_{j+1} |_{B^4(a_j')} = \varphi_j |_{B^4(a_j')}$.

Define $\varphi \colon B^4(a) \to V$ by $\varphi (z) = \varphi_j(z)$ 
if $z \in B^4(a_j')$.
Then $\varphi$ is a well-defined symplectic embedding.
Moreover, $\varphi (B^4(a_j')) = \varphi_j (B^4(a_j')) \supset K_j$.
Hence $\varphi$ is onto $\bigcup_j K_j = V$.
\proofend

\subsection{Distorted diamonds}
All of our embeddings, besides one, will start from a diamond~$\Diamond (a)$.
For our full filling of $T(1,1)$, however, 
we shall need to start from a {\it distorted diamond}. 
 
Fix $a>0$.
Let $u_+ \colon [0,a] \to \RR$ be a continuous,
nondecreasing function that for convenience we take to
be piecewise-linear. Suppose further that
$$
u_+(0)=0 \quad \mbox{ and } \quad
u_+'(\rho) \in [0,1] \;\mbox{ on the linear pieces}.
$$
Define the piecewise-linear function $u_- \colon [0,a] \to \RR$ by $u_-(\rho) = u_+(\rho)-\rho$. 
Then $u_-(0)=0$ and $u_-'(\rho) \in [-1,0]$,
so that $u_-$ is nonincreasing. 
Moreover, $u_+'(\rho)-u_-'(\rho)=1$ on the linear pieces,
and $u_+(a)-u_-(a)=a$.
Let
$$
\sigma_u \colon D(a) \,\to\, 
\left( u_-(a) - \frac \varepsilon 2,\ u_+(a) + \frac \varepsilon 2 \right) \times (0,1)
$$ 
be a symplectic embedding such that
\begin{equation*} 
x(\sigma_u (z)) \,\in\, \left( u_-(\rho)- \frac \gve 2,\ u_+(\rho)+ \frac \gve 2 \right)
\quad \mbox{ for all}\, z \in D(a) \mbox{ with } \pi |z|^2 < \rho .
\end{equation*}
Figure~\ref{fig.diam3} shows the image of concentric circles of 
the map~$\sigma_u$ for $a=3$ and for the function $u_+ \colon[0,3] \to \RR$
with slope $\frac 12$ on $[0,\frac 32]$ and slope $\frac 13$ on $[\frac 32,3]$. 
%

%
\begin{figure}[h]
 \begin{center}
  \psfrag{x}{$x$}
  \psfrag{y}{$y$}
  \psfrag{1}{$1$}
  \psfrag{34}{$\frac 34$}
  \psfrag{-34}{$-\frac 34$}
  \psfrag{54}{$\frac 54 + \frac \varepsilon 2$}
  \psfrag{74}{$-\frac 74 - \frac \varepsilon 2$}
  \leavevmode\epsfbox{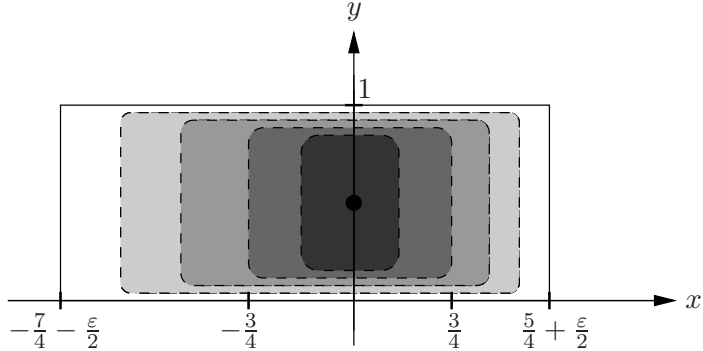}
 \end{center}
 \caption{The map $\sigma_u$ for a function $u_+$ with two pieces.}
 \label{fig.diam3}
\end{figure}

Let $u_+$ and $v_+$ be functions as above, 
and 
$$
\sigma_u \colon D(a) \to \RR^2(x_1,y_1), \quad \sigma_v \colon D(a) \to \RR^2(x_2,y_2)
$$
be symplectic embeddings
associated to $u$ and~$v$ as above.
Then, as in Lemma~\ref{l:alpha}, the product $\si_u\times \si_v$ induces a symplectic embedding
$B^4(a)\to \R^2(\bx)\times (0,1)^2$.  Because $u_+,v_+$ are piecewise linear, 
the image $(\sigma_u \times \sigma_v) (B^4(a))$ of the ball projects to a ($\varepsilon$-neighbourhood of a) polygon in $\R^2(\bx)$ 
whose vertices are determined by the non-smooth points of~$u$ and~$v$. 
For example, if $u_+(\rho) = v_+(\rho) = \frac \rho 2$ then the image is the standard diamond 
$\Diamond(a)$ constructed above.

\m
\ni
Assume now that $a>1$.
Define $d$ by $2d=a-1$,
and suppose that the functions $u_+$, $v_+$ have only two pieces, 
where $u_+$ is standard (i.e.\ equal to $\frac \rho 2$) on $[0,2d]$ and $v_+$ is standard on~$[0,1]$.  Then the distortion occurs when either 
$|x_1| > d$ or $|x_2| > \frac 12$.
%
Computing as in the proof of Lemma~\ref{l:alpha} (again omitting $\gve> 0$), we find that the image of~$B^4(a)$
under $\sigma_u \times \sigma_v$ is contained in $\Diamond \times (0,1)^2$,
where $\Diamond$ is as in Figure~\ref{fig.dist}~(II).




\begin{figure}[h]
 \begin{center}
  \psfrag{x1}{$x_1$}
  \psfrag{x2}{$x_2$}
  \psfrag{a}{$\frac a2$}
  \psfrag{-a}{$-\frac a2$}
  \psfrag{1}{$\frac 12$}
  \psfrag{-1}{$-\frac 12$}
  \psfrag{d}{$d$}
  \psfrag{-d}{$-d$}
  \psfrag{u+}{$u_+(a)$}
  \psfrag{u-}{$u_-(a)$}
  \psfrag{v+}{$v_+(a)$}
  \psfrag{v-}{$v_-(a)$}
  \psfrag{I}{(I)}
  \psfrag{II}{(II)}
  \leavevmode\epsfbox{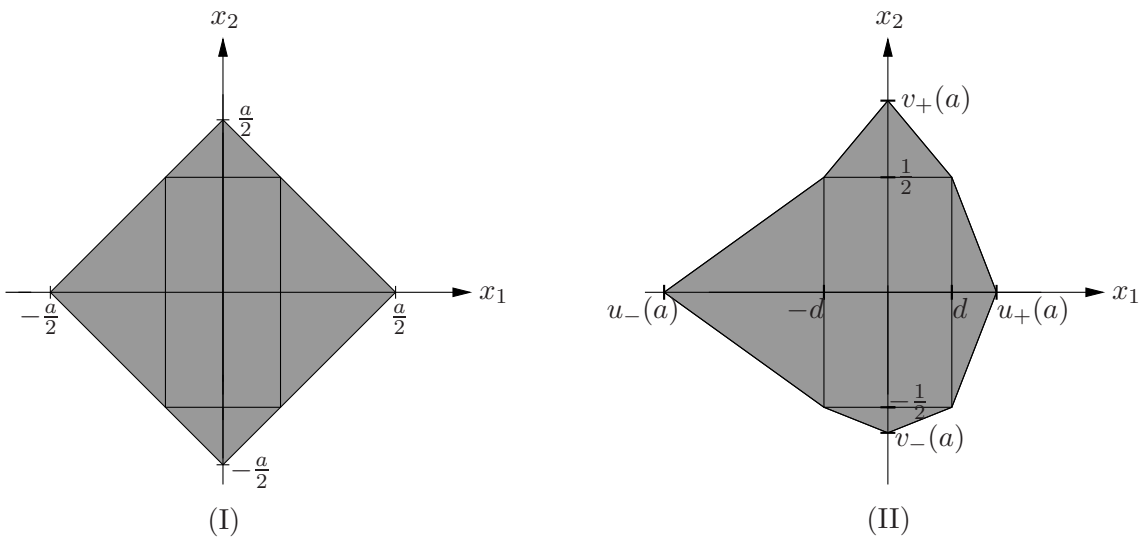}
 \end{center}
 \caption{The diamond $\Diamond (a)$, and a distorted diamond.}
 \label{fig.dist}
\end{figure}

We call the factor $\Diamond$ of such an image $\Diamond \times (0,1)^2$
a {\it distorted diamond}.
A distorted diamond of 
size~$a>1$ therefore consists of 
\begin{itemize}
\item 
a rectangle $(-d,d) \times (-\frac 12,\frac 12)$ with $2d=a-1$,
\item 
a {\bf top and bottom triangle} each with base~$2d$, 
  the sum of whose heights is~$v_+(a)-v_-(a)-1 = a-1=2d$,
  and 
\item 
two {\bf flaps} each with height~$1$, the sum of whose widths is~$u_+(a)-u_-(a)-2d = a-(a-1)=1$.
\end{itemize}

Corollary~\ref{cor:alpha} has the following generalization.

\begin{proposition} \label{prop:symp}
Let $\Diamond$ be a distorted diamond of size~$a$.
Then the product $\Diamond \times (0,1)^2$ is symplectomorphic to the open ball~$B^4(a)$.
\end{proposition}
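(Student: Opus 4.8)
The plan is to deduce this from Lemma~\ref{l:nest}, just as Corollary~\ref{cor:alpha} was deduced from Lemma~\ref{l:alpha}. Write $V := \Diamond \times (0,1)^2$, where the distorted diamond $\Diamond \subset \RR^2(\bx)$ of size~$a$ is produced, as in the construction above, from admissible piecewise-linear functions $u_+$, $v_+$ on $[0,a]$. By Lemma~\ref{l:nest} it is enough to show that for every compact $K \subset V$ there exist $\hat a < a$ and a symplectic embedding $B^4(\hat a) \to V$ whose image contains~$K$.

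To produce such an embedding I would shrink the whole picture by a factor $\lambda := \hat a / a < 1$. The functions $\hat u_+(\rho) := \lambda\, u_+(\rho/\lambda)$ and $\hat v_+(\rho) := \lambda\, v_+(\rho/\lambda)$ on $[0,\hat a]$ are again admissible, and one checks directly that the polygon they produce equals $\lambda \Diamond$. Hence the construction preceding Proposition~\ref{prop:symp} yields, for every small $\varepsilon > 0$, a symplectic embedding $\sigma_{\hat u} \times \sigma_{\hat v} \colon B^4(\hat a) \to (\lambda\Diamond)^{+\varepsilon} \times (0,1)^2$, where $(\lambda\Diamond)^{+\varepsilon}$ denotes the $\varepsilon$-enlargement of $\lambda\Diamond$ in the $\RR^2(\bx)$-factor. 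Since $\Diamond$ is bounded, open and star-shaped with respect to the origin (which lies in its central rectangle), one has $\overline{\lambda\Diamond} \subset \Diamond$, so that for $\varepsilon$ small enough $(\lambda\Diamond)^{+\varepsilon} \subset \Diamond$ and we obtain an embedding $B^4(\hat a) \to V$.

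It remains to arrange that the image contains a given compact $K \subset V$. Since $\bigcup_{\lambda < 1}\lambda\Diamond = \Diamond$, we may first choose $\hat a < a$ with $\lambda = \hat a/a$ close enough to~$1$ that $K \subset \lambda\Diamond \times (0,1)^2$. Then, exactly as in the proofs of Lemma~\ref{l:alpha} and Corollary~\ref{cor:alpha}, and using Lemma~3.1.8 of~\cite{Sch-book}, the maps $\sigma_{\hat u}$ and $\sigma_{\hat v}$ may in addition be chosen so that the image of $B^4(\hat a)$ contains this prescribed compact set~$K$; keeping $\varepsilon$ small as above, this image lies in $V$. Lemma~\ref{l:nest} now gives $V \cong B^4(a)$.

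The genuinely delicate point is the last step: one must choose the parameter~$\varepsilon$ and the embeddings $\sigma_{\hat u}, \sigma_{\hat v}$ so that the image of the ball simultaneously stays inside the fixed target~$V$ and sweeps out an arbitrarily large compact part of it. This is handled verbatim as for the ordinary diamond in Corollary~\ref{cor:alpha}; the remaining ingredients — admissibility of $\hat u_+, \hat v_+$, the identification of their polygon with $\lambda\Diamond$, and the star-shapedness of~$\Diamond$ — are elementary.
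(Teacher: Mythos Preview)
Your proposal is correct and follows essentially the same approach as the paper's own proof: both verify the hypothesis of Lemma~\ref{l:nest} using the $\sigma_u \times \sigma_v$ construction, and the paper's proof is in fact even more terse than yours (it simply asserts that ``in view of the above construction'' the required embeddings $B^4(\hat a) \to \Diamond \times (0,1)^2$ with $K \subset \Ima\hat\varphi$ exist). Your explicit scaling $\hat u_+(\rho) = \lambda\, u_+(\rho/\lambda)$ together with the star-shapedness of~$\Diamond$ at the origin is one clean way to make the paper's one-line claim precise; an equally valid alternative (perhaps closer to what the paper has in mind) is to keep the same $u,v$ and simply restrict $\sigma_u \times \sigma_v$ to the smaller ball $B^4(\hat a)$, using the monotonicity of $u_\pm, v_\pm$ to see that the resulting sub-polygon sits inside~$\Diamond$.
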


\proof
In view of the above construction,
for each compact subset $K \subset \Diamond \times (0,1)^2$
there exists $\hat a < a$ and a symplectic embedding 
$\hat \varphi \colon B^4(\hat a) \to \Diamond \times (0,1)^2$
such that $\Ima \hat\varphi \supset K$.
The proposition therefore again follows from Lemma~\ref{l:nest}.
\proofend

In~\cite{Tr}, Traynor used a different construction to prove Proposition~\ref{prop:symp} for the special case that $\Diamond$
is the standard simplex $\left\{ (x_1,x_2) \mid x_1,x_2 >0,\, x_1+x_2 < a \right\}$.

\subsection{Shears} \label{ss:shears}

Let $f \colon \RR \to \RR$ be a smooth function.
Consider the $x_1$-shear $$
\varphi (x_1,x_2) = (x_1 + f(x_2), \, x_2)
$$
 of~$\RR^2$.
Then the diffeomorphism
\begin{equation} \label{e:x1-shear}
\widehat \varphi (x_1,x_2,y_1,y_2) \,=\, 
\bigl( x_1 + f(x_2), \, x_2, \, y_1, \, y_2 - f'(x_2)\,y_1 \bigr) 
\end{equation}
is a symplectomorphism of $\RR^4$.
Indeed, this is just the ``cotangent map'' 
$$
(x_1,x_2,y_1,y_2) \mapsto 
\left( \varphi (x_1,x_2), \bigl( d \varphi (x_1,x_2) )^{T} \bigr)^{-1} (y_1,y_2) \right)
$$
of the shear $\varphi$.
We call a map $\widehat \varphi$ of the form~\eqref{e:x1-shear} also an $x_1$-shear.
Similarly, an $x_2$-shear $\varphi (x_1,x_2) = (x_1, \, x_2+g(x_1))$ induces a symplectomorphism 
\begin{equation} \label{e:x2-shear}
(x_1,x_2,y_1,y_2) \,\mapsto\, 
\bigl( x_1, \, x_2+g(x_1), \, y_1 - g'(x_1) \, y_2, \, y_2 \bigr) , 
\end{equation}
which we again call an {\it $x_2$-shear}.

Let $U \subset \RR^2(\bx)$ be a domain,
and consider the image $\widetilde \varphi (U \times (0,1)^2)$ of an $x_1$-shear
in $\RR^2(\bx) \times \RR^2(\by)$.
This image fibers over $\varphi (U) \subset \RR^2(\bx)$, with fiber 
$\{ (y_1, y_2-f'(x_2)y_1) \mid (y_1,y_2) \in (0,1)^2\}$ 
over $\varphi (x_1,x_2)$,
see Figure~\ref{fig.fiber}.

%
\begin{figure}[h]
 \begin{center}
  \psfrag{y1}{$y_1$}
  \psfrag{y2}{$y_2$}          
  \psfrag{1}{$1$}
  \psfrag{-f}{$-f'(x_2)$}
  \leavevmode\epsfbox{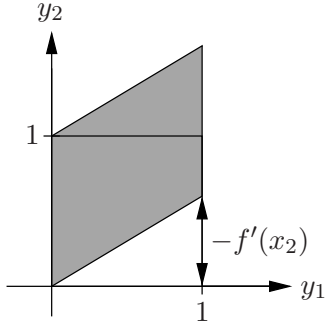}
 \end{center}
 \caption{The fiber over $\varphi (x_1,x_2)$.}
\label{fig.fiber}
\end{figure}

The projection $\pi_{\mathbf y} \colon \RR^2(\by) \to \RR^2(\by) / \ZZ^2(1,1)$
is injective on these fibers.
Further 
$$
T(\mu, 1)\ =\ \RR^2(\bx) / \ZZ^2(\mu,1) 
\ \times \
\RR^2(\by) / \ZZ^2(1,1).
$$
It follows that if the projection 
$\pi_{\bx} \colon \RR^2(\bx) \to \RR^2(\bx) / \ZZ^2(\mu,1)$
is injective
on~$\varphi (U)$, then also 
$$
\pi = \pi_{\bx} \times \pi_{\by}\ \colon \ 
\widetilde \varphi \bigl( U \times (0,1)^2 \bigr) \to T(\mu, 1) 
$$
is injective.
The same holds true for $x_2$-shears.

One can check by direct calculation that an arbitrary composite of shears can map 
$U\times (0,1)^2$ to a set that intersects the fibers $\bx\times \R^2(\by)$ in subsets 
that no longer project injectively under~$\pi_{\by}$.  
The next lemma gives conditions under which shears may be composed:
One essential condition is that the shears must affect disjoint subsets of~$U$.
  
\begin{lemma} \label{l:twoshear}
Suppose that an $x_1$-shear $\varphi_1$ and an $x_2$-shear~$\varphi_2$ 
satisfy the following conditions:
\begin{itemize}
\item[(i)]
$\varphi_2$ fixes the set 
$\varphi_1 \bigl( \{ \bx \in U \mid \varphi_1 (\bx) \neq \bx \} \bigr)$
pointwise,
%
\item [(ii)]
$\pi_{\bx}$ injects $\varphi_2 \circ \varphi_1 (U)$ into 
$\RR^2(\bx) / \ZZ^2(\mu,1)$.
\end{itemize}
Then $\pi$ injects $\widehat \varphi_2 \circ \widehat \varphi_1 \bigl( U \times (0,1)^2 \bigr)$ 
into $T(\mu,1)$.
\end{lemma}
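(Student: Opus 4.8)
The plan is to track the image of a point under the composite $\widehat\varphi_2\circ\widehat\varphi_1$ and show that the $\mathbf y$-coordinates always stay within a single fundamental domain of $\ZZ^2(1,1)$, so that injectivity of $\pi$ reduces to the injectivity of $\pi_{\bx}$ on $\varphi_2\circ\varphi_1(U)$ assumed in~(ii). Write $\varphi_1(x_1,x_2)=(x_1+f(x_2),x_2)$ and $\varphi_2(x_1,x_2)=(x_1,x_2+g(x_1))$, with the lifted symplectomorphisms $\widehat\varphi_1$, $\widehat\varphi_2$ as in~\eqref{e:x1-shear} and~\eqref{e:x2-shear}. The key point of hypothesis~(i) is that on the region of $U$ where $\varphi_1$ is \emph{not} the identity, the subsequent shear $\varphi_2$ is the identity; equivalently, $g'$ vanishes on the first coordinate of $\varphi_1(\bx)$ whenever $f(x_2)\neq 0$. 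Hence for a given starting point $\bx=(x_1,x_2)\in U$ at most one of the two shears genuinely distorts the fiber $\bx\times(0,1)^2$.

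First I would split $U$ into the two pieces $U_0=\{\bx\in U\mid \varphi_1(\bx)=\bx\}$ and $U_1=\{\bx\in U\mid\varphi_1(\bx)\neq\bx\}$. On $U_1$, condition~(i) says $\widehat\varphi_2$ acts as the identity near $\varphi_1(U_1)$, so $\widehat\varphi_2\circ\widehat\varphi_1=\widehat\varphi_1$ there, and the fiber over $\varphi_2\varphi_1(\bx)=\varphi_1(\bx)$ is exactly $\{(y_1,y_2-f'(x_2)y_1)\mid(y_1,y_2)\in(0,1)^2\}$, the set already analyzed before Lemma~\ref{l:twoshear}, on which $\pi_{\by}$ is injective. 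On $U_0$ we have $\widehat\varphi_1=\id$, so $\widehat\varphi_2\circ\widehat\varphi_1=\widehat\varphi_2$, and the fiber over $\varphi_2(\bx)$ is $\{(y_1-g'(x_1)y_2,y_2)\mid(y_1,y_2)\in(0,1)^2\}$, which is the $x_2$-shear analogue and again maps injectively under $\pi_{\by}$. In each case the fiber lies in a single $\ZZ^2(1,1)$-fundamental domain, so two points with the same image under $\pi$ must have the same $\mathbf y$-coordinates \emph{and} the same image under $\pi_{\bx}$ in their respective fibers.

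With that fiber analysis in hand, suppose $\pi(\widehat\varphi_2\widehat\varphi_1(\bx,\by))=\pi(\widehat\varphi_2\widehat\varphi_1(\bx',\by'))$. Projecting to the $\bx$-factor gives $\pi_{\bx}(\varphi_2\varphi_1(\bx))=\pi_{\bx}(\varphi_2\varphi_1(\bx'))$, hence $\varphi_2\varphi_1(\bx)=\varphi_2\varphi_1(\bx')$ by~(ii); since $\varphi_2\circ\varphi_1$ is a diffeomorphism of $\RR^2(\bx)$ onto its image this forces $\bx=\bx'$. Then both points lie over the same base point, so they lie in the same sheared fiber, on which we have just seen $\pi_{\by}$ is injective; therefore $\by$ and $\by'$ give the same point, and the full preimages agree. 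This proves $\pi$ is injective on $\widehat\varphi_2\circ\widehat\varphi_1(U\times(0,1)^2)$.

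The main obstacle is organizing the fiber computation cleanly so that the case split on $U_0$ versus $U_1$ really does cover everything: one must be careful that condition~(i) is about $\varphi_1(U_1)$ being fixed by $\varphi_2$ as a \emph{set}, which is what makes $g'$ vanish on the relevant first coordinates and hence makes $\widehat\varphi_2$ act trivially on those fibers — not merely $\varphi_2$ acting trivially on the base, but $\widehat\varphi_2$ acting trivially on the full $\RR^4$-fibers, which is exactly what~\eqref{e:x2-shear} gives when $g'=0$ there. Once this is spelled out, the rest is the bookkeeping already rehearsed for a single shear.
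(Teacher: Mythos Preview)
Your argument follows the same strategy as the paper's: condition~(i) guarantees that over each base point at most one of the two shears acts nontrivially on the $\by$-fiber, so the image fiber is a single sheared unit square on which $\pi_{\by}$ is injective, and then condition~(ii) takes care of the base. The paper compresses this into two sentences; you have written out the case split.

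There is, however, a slip in your treatment of $U_0=\{\bx\in U\mid f(x_2)=0\}$. You assert $\widehat\varphi_1=\id$ there, but $\widehat\varphi_1$ still shears the fiber by $(y_1,y_2)\mapsto(y_1,y_2-f'(x_2)y_1)$, and $f(x_2)=0$ does not force $f'(x_2)=0$. At such a point both $f'(x_2)$ and $g'(x_1)$ could in principle be nonzero, and then the composite fiber map
\[
(y_1,y_2)\;\longmapsto\;\bigl((1+f'g')\,y_1-g'y_2,\ y_2-f'y_1\bigr)
\]
need not carry $(0,1)^2$ injectively into $\R^2(\by)/\Z^2$ (for instance $f'=10$, $g'=\tfrac1{10}$ fails). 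The repair is exactly the openness argument you already invoked for~$U_1$: if $\bx=(x_1,x_2)\in U_0$ with $f'(x_2)\neq0$, then since $U$ is open there are points $(x_1,x_2')\in U_1$ with $x_2'$ arbitrarily close to~$x_2$, and the images $\varphi_1(x_1,x_2')=(x_1+f(x_2'),x_2')$ have first coordinate sweeping out a full neighborhood of~$x_1$; condition~(i) then forces $g$ to vanish on that neighborhood, hence $g'(x_1)=0$, and $\widehat\varphi_2$ is after all the identity on that fiber. With this correction your proof is complete and agrees with the paper's.
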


\begin{proof} 
The first condition implies that each $2$-plane $\bx \times \R^2(\by)$ is moved by at most one 
of the shears. Hence each $\bx$-fiber of the image
$\widehat \varphi_2 \circ \widehat \varphi_1 (U \times (0,1)^2)$ 
projects injectively under $\pi_{\by} \colon \R^2(\by)\to \R^2(\by)/\Z(1,1)$.   
It remains to check that the projection to $\R^2(\bx) / \Z(\mu,1)$ is injective, 
which is guaranteed by~(ii).
\end{proof}

\m
We next give three examples illustrating the above embedding method.

\m \ni
\begin{example} \label{ex:1} 
{\it A full filling of $T(2k^2,1)$ for each $k \in \NN$.}
\rm\ \
We start from the diamond $\Diamond (2k)$, 
with vertices $(\pm k, 0)$, $(0, \pm k)$.
Using the linear shear $\varphi(x_1,x_2)=(x_1+(2k-1)x_2,x_2)$, 
the diamond is transformed into the parallelogram~$P(k)$ 
with vertices $(\pm k, 0)$ and $\pm (2k^2-k,k)$, 
see~Figure~\ref{fig.rhomboid}.

\begin{figure}[ht]
 \begin{center}
   \psfrag{k}{$k$}
   \psfrag{-k}{$-k$}
   \psfrag{x1}{$x_1$}
   \psfrag{x2}{$x_2$}
   \psfrag{kk}{$k^2$}
   \psfrag{2kk}{$2k^2-k$} 
   \psfrag{-2kk}{$-2k^2 + k$}
 \leavevmode\epsfbox{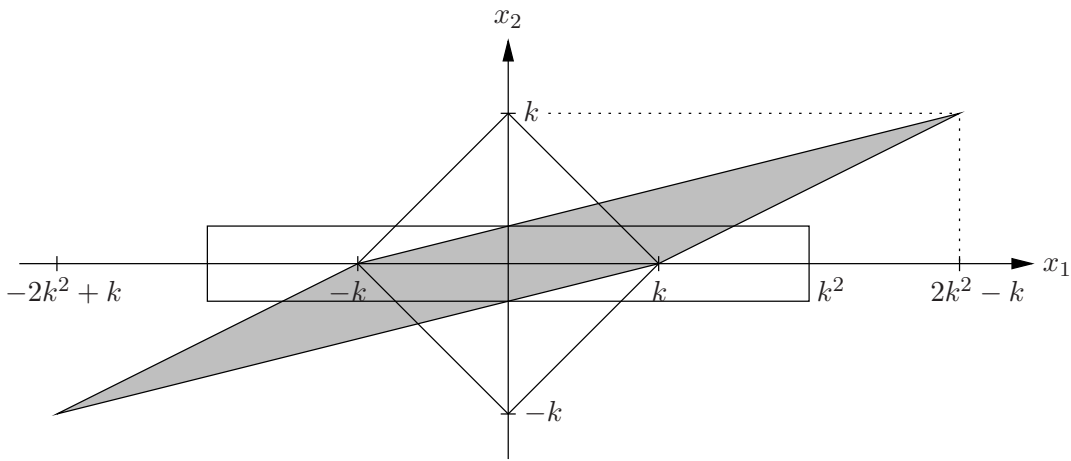}
 \end{center}
 \caption{The parallelogram~$P(k)$.}
 \label{fig.rhomboid}
\end{figure}

\ni
This shear is chosen so that 
\begin{itemize}
\item the vertical distance between the top and bottom edges of~$P(k)$ is~$1$, and
\item each of these edges projects to an interval of length~$2k^2$ on the $x_1$-axis.
\end{itemize}

\begin{figure}[ht]
 \begin{center}
   \psfrag{X1}{$x_1$}
   \psfrag{X2}{$x_2$}
 \leavevmode\epsfbox{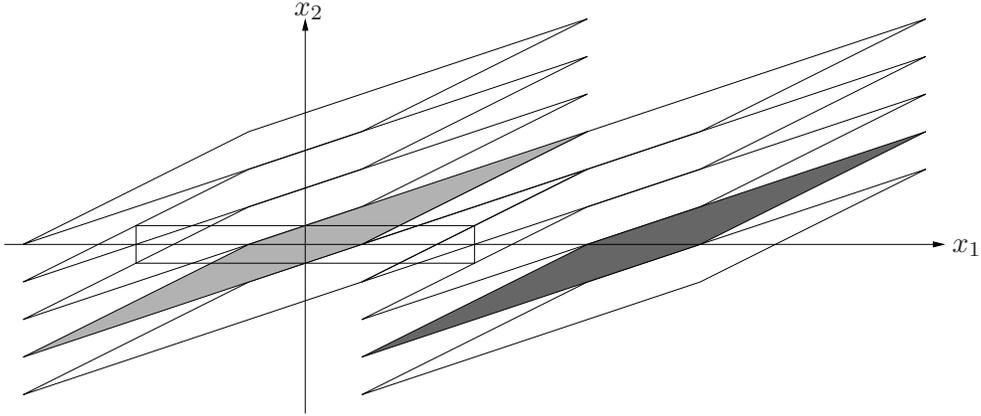}
 \end{center}
 \caption{The tiling of $\RR^2(\mathbf x)$ by translates of $P(k)$ (in the picture $k=3$). The darker parallelogram is a translate of the lighter one by $2k^2$ in the $x_1$-direction. The rectangle marks the standard fundamental domain already depicted in Figure~\ref{fig.rhomboid}.}
  \label{fig.tiling}
\end{figure}
%
It follows that the
set $P(k)$ is a fundamental domain for the action of $\ZZ^2$ with generators 
$2k^2 \partial_{x_1}$ and~$\partial_{x_2}$, cf. Figure~\ref{fig.tiling}. 
\ni Now Proposition~\ref{prop:symp} shows that the ball~$B^4(2k)$
symplectically embeds into~$T(2k^2,1)$. 
\end{example}

\begin{remark} \label{rem:expl}
{\rm 
Together with scaling and Remark~\ref{r:normalform}, this gives 
an explicit full filling by one ball of~$T(\mu,1)$ for all $\mu = \frac{2m^2}{n^2}$ 
with $m,n$ relatively prime.
(Non-explicit full fillings of these tori follow from  
the computation of Seshadri constants~\eqref{eq:seshbound_2d_square}, 
and from Proposition~\ref{p:muge2},
together with Remark~\ref{r:normalform}.)
}
\end{remark}

\begin{remark}
{\rm
By shears as in Example~\ref{ex:1} one can also construct explicit full fillings for some special 
irrational tori, namely for those of the form $\R^4/\Lambda$ where the lattice splits as 
$\Lambda=\Lambda_x \times \Lambda_y$, such that $\Lambda_y$ is the standard $\ZZ^2 \subset \RR^2(\by)$ 
and such that some linearly sheared diamond in~$\RR^2(\bx)$ is a fundamental domain for~$\Lambda_x$. 
}
\end{remark}

%
\begin{figure}[h]
 \begin{center}
  \psfrag{x1}{$x_1$}
  \psfrag{x2}{$x_2$}
  \psfrag{A}{$A$}
  \psfrag{B}{$B$}
  \psfrag{C}{$C$}
  \psfrag{A'}{$A'$}
  \psfrag{a}{$\frac a2$}
  \psfrag{a4}{$\frac a2=\frac 23$} 
  \psfrag{-a}{$-\frac a2$}
  \psfrag{23}{$\frac 13$}
  \psfrag{11}{$(\frac 12, \frac 12)$}
  \psfrag{I}{(I)}
  \psfrag{II}{(II)}
  \leavevmode\epsfbox{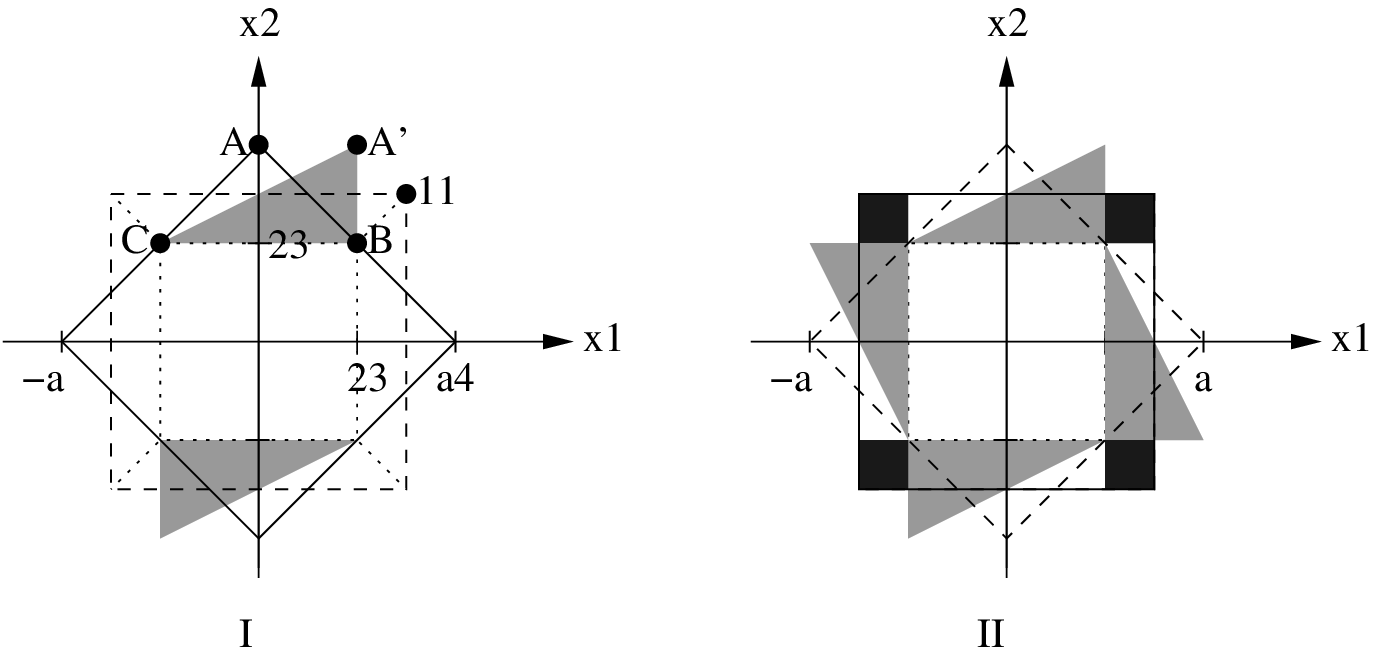}
 \end{center}
 \caption{Filling $\frac 89$ of $T(1,1)$.}
\label{fig43}
\end{figure}

\m \ni
\begin{example} \label{ex:2}
{\it  Filling $\frac 89$ of $T(1,1)$.} \rm \ \ 
Let $a = \frac 43$. The corners of the maximal inscribed square in~$\Diamond (a)$ 
with sides parallel to the axes have coordinates $(\pm \frac 13, \pm \frac 13)$. 
Two of these corners are labelled~$B$ and~$C$ in Figure~\ref{fig43}~(I).
Choose $f$ with $f(x_2)=0$ for $-\frac 13 \le x_2 \le \frac 13$ and $f'(x_2) =1$ for $|x_2| >\frac 13$
(with rounding between).
In particular, $f(-\frac a2) = -\frac 13$ and $f(\frac a2) = \frac 13$.
The $x_1$-shear induced by $f$ then takes the upper triangle~$ABC$ 
to the shaded triangle~$A'BC$, and similarly for the bottom triangle, 
while the rest of the diamond is untouched.
The $x_2$-shear induced by the function $-f$ then moves the left and right flaps
of~$\Diamond (a)$ to the flaps shown in Figure~\ref{fig43}~(II),
while the rest of the image of the $x_1$-shear is untouched.
Therefore, a point in $\Diamond (a)$ is affected by at most one of these two shears, so that we can apply Lemma~\ref{l:twoshear}.

The composition of these two shears takes (a slight shrinking of) $\Diamond (a)$ 
to the shaded domain in Figure~\ref{fig43}~(II).
This domain injects into $\RR^2(\bx) / \ZZ^2(1,1)$:
It wraps up under the action of $\ZZ^2$ to a set covering all of the square $(-\frac 12,\frac 12)^2$ except the four black squares of area~$(\frac 16)^2$ each.
For each $\gve >0$ we thus have constructed a symplectic embedding of a ball into~$T(1,1)$ 
filling at least $\frac 89-\gve$ of the volume of~$T(1,1)$.
\end{example}

\m \ni
\begin{example} \label{ex:3} 
{\it Filling $\frac{49}{50}$ of $T(1,1)$.} 
{\rm Let $a = \frac 75$.
The idea is to divide the square representing $T(1,1)$ into two
rectangles, one the maximum rectangle of height~$1$ that lies in the 
diamond~$\Diamond (a)$ (and hence has width $a-1=\frac 25$),
and the other of width $2-a=\frac 35$, see Figure~\ref{fig75}~(I).

%
\begin{figure}[h]
 \begin{center}
  \psfrag{x1}{$x_1$}
  \psfrag{x2}{$x_2$}
  \psfrag{a}{$\frac a2$}
  \psfrag{1}{$\frac 12$}
  \psfrag{-1}{$-\frac 12$}
  \psfrag{2}{$1$}
  \psfrag{45}{$\frac 25$}
  \psfrag{65}{$\frac 35$}
  \psfrag{-15}{$-\frac 1{10}$}
  \psfrag{I}{(I)}
  \psfrag{II}{(II)}
  \leavevmode\epsfbox{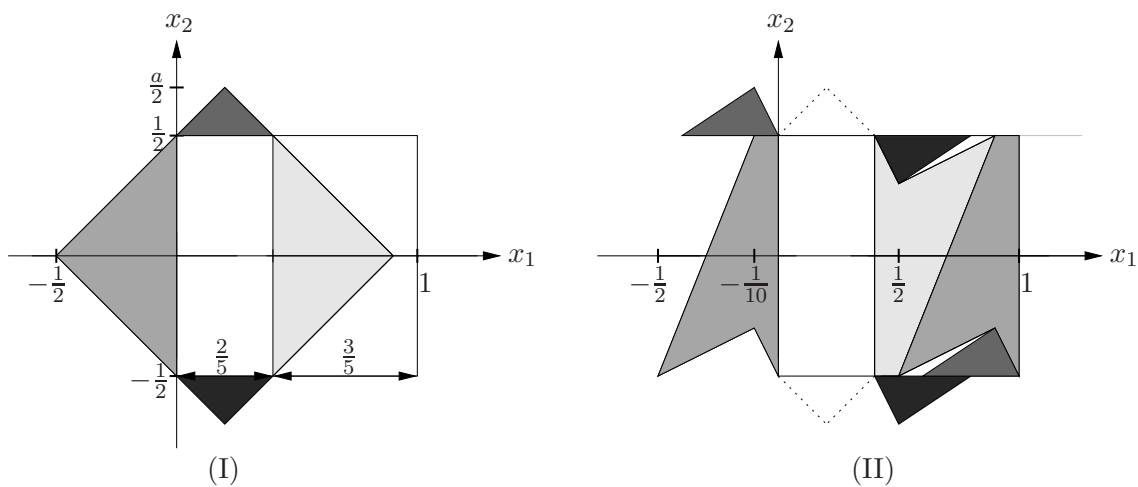}
 \end{center}
 \caption{Filling $\frac{49}{50}$ of $T(1,1)$, schematically.}
 \label{fig75}
\end{figure}
%
%
We shear the top triangle by a strong $x_1$-shear to the left,
the bottom triangle by a strong $x_1$-shear to the right,
and then shear the flaps by $x_2$-shears in a symmetric way 
so as to free triangles into which the sheared top and bottom triangles fit,
when projected to the torus.
The freed triangles have height $\frac 15$ and width~$\frac 12$,
while the triangles fitting in have the same height, but width~$\frac 25$ only.
In Figure~\ref{fig75}~(II), one sees the image of the dark grey
top triangle and its translate by $\partial_{x_1}- \partial_{x_2}$, 
the image of the black bottom triangle and its translate by $\partial_{x_2}$,
as well as the image of the mid-grey
left flap and its translate by $\partial_{x_1}$.
%
%
\begin{figure}[h]
 \begin{center}
  \psfrag{x1}{$x_1$}
  \psfrag{x2}{$x_2$}
  \psfrag{-12}{$-\frac 12$}
  \psfrag{ae}{$\frac{a-\varepsilon}2+\frac 12$}
  \psfrag{1}{$1$}
  \psfrag{25}{$\frac 15$}
  \psfrag{45}{$\frac 25$}
  \leavevmode\epsfbox{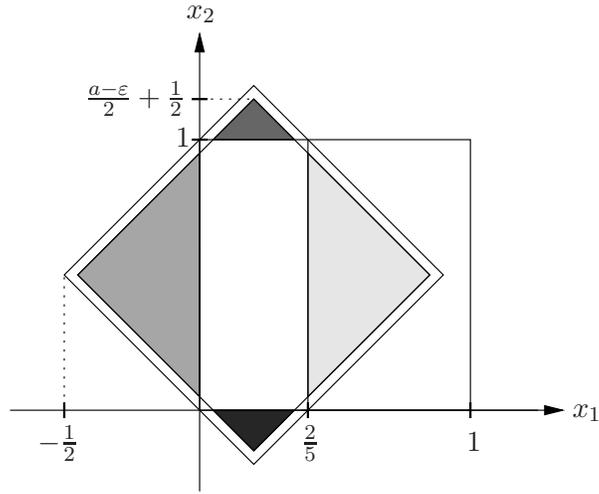}
 \end{center}
 \caption{The decomposition of the diamond $\Diamond (a-\varepsilon)$.}
 \label{fig.diameps}
\end{figure}

To make this construction precise, fix a small $\varepsilon >0$,
and decompose the diamond~$\Diamond (a-\varepsilon)$ into four triangles and a rectangle
of height~$1$ and width~$\frac 25$
from each of whose four vertices a simplex of width $\frac{\varepsilon}2$ has been removed.
For notational convenience, we also translate $\Diamond (a-\varepsilon)$ by $\frac 12 \partial_{x_2}$
(see Figure~\ref{fig.diameps}).

%
\begin{figure}[h]
 \begin{center}
  \psfrag{x1}{$x_1$}
  \psfrag{x2}{$x_2$}
  \psfrag{1}{$1$}
  \psfrag{12}{$\frac 12$}
  \psfrag{ae}{$\frac{a-\varepsilon}2+\frac 12$}
  \psfrag{45}{$\frac 25$}
  \psfrag{A}{$A$}
  \psfrag{B}{$B$}
  \psfrag{C}{$C$}
  \psfrag{D}{$D$}
  \psfrag{C'}{$C'$}
  \psfrag{D'}{$D'$}
  \psfrag{X}{$X$}
  \psfrag{Y}{$Y$}
  \psfrag{W}{$W$}
  \psfrag{Z}{$Z$}
  \psfrag{W'}{$W'$}
  \psfrag{Z'}{$Z'$}
  \leavevmode\epsfysize=10cm\epsfbox{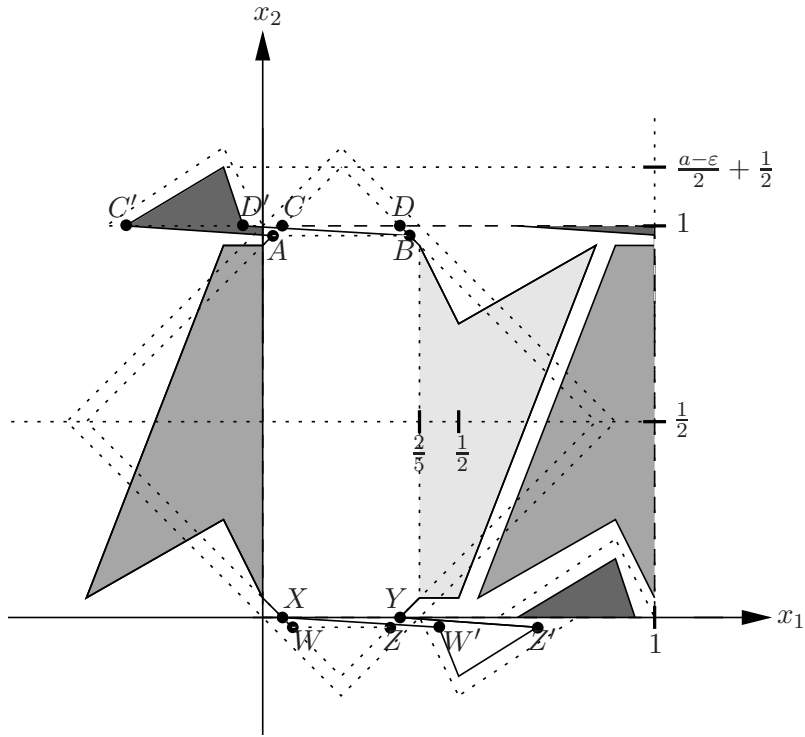}
 \end{center}
 \caption{Filling $\frac{49}{50}$ of $T(1,1)$.}
 \label{fig.precise}
\end{figure}

\ni
The $x_1$-shear, that moves the top triangle to the left and the bottom 
triangle to the right, has the following properties:
\begin{itemize}
\item
It has support in $\{ x_2 > 1-\frac \varepsilon 4\} \cup \{ x_2 < 0 \}$.
\item
Near the upper triangle, 
this shear is very strong on $\{ 1 - \frac \varepsilon 4 < x_2 < 1 \}$,
and up to a minor shear, it is a translation on $\{ 1 < x_2 < \frac{a-\varepsilon}2+\frac 12 \}$. 
\item
It fixes the points 
$A=(\frac \varepsilon 4, 1-\frac \varepsilon 4)$ 
and~$B=(\frac 25-\frac \varepsilon 4, 1-\frac \varepsilon 4)$,
but translates $C = (\frac \varepsilon 2, 1)$ to $C' =(-\frac 25 + \frac \varepsilon 2, 1)$ and 
$D = (\frac 25 - \frac \varepsilon 2, 1)$ to $D' =(-\frac \varepsilon 2, 1)$.
\item
On the bottom triangle, it acts very strongly on 
$\{ -\frac \varepsilon 4 <x_2<0 \}$.
\item
It fixes the points
$X= (\frac \varepsilon 2, 0)$ and~$Y=(\frac 25-\frac \varepsilon 2,0)$,
but translates 
$W=(\frac 34 \varepsilon, -\frac \varepsilon 4)$ to 
$W'=(\frac 25+\frac \varepsilon 2, - \frac \varepsilon 4)$
and $Z=(\frac 25-\frac 34 \varepsilon, - \frac \varepsilon 4)$ to 
$Z'=(\frac 45-\varepsilon, -\frac \varepsilon 4)$. 
\end {itemize}
%
In Figure~\ref{fig.precise} we drew the projection of the (dark grey) 
top triangle $\{ 1-\frac \varepsilon 4 < x_2\}$ to the fundamental domain 
$(0,1) \times (0,1)$ of the usual $\ZZ^2$-action,
but we did not draw the projection of the bottom triangle $\{ x_2<0 \}$.  
In order to see that the image projects injectively to $T(1,1)$,
notice that 
\begin{itemize}\item[-] 
the point~$C$ lies strictly above the segment $\overline{D'B}$,
\item[-]
 $W'$ lies on the right of~$B$,
and 
\item[-] 
$B$ lies $\frac \varepsilon 4$ below~$C$
and $W'$ lies $\frac \varepsilon 4$ below~$X$.
\end{itemize}
Therefore, the translate by $\partial_{x_2}$ of the segment $\overline{XW'}$
lies above the segment $\overline{D'B}$.
}
\end{example}

\section{Proof of Theorem~\ref{t:1}} \label{s:t1}

\ni
Since we already proved Theorem~\ref{t:2}, it only remains to treat the product torus $T(1,1)$. 
So let $a=\sqrt{2}$.
We want to find, for each $\varepsilon >0$, a symplectic embedding of the ball $B^4(a-\varepsilon)$
into~the torus $T(1,1)$.
We describe the schematic embedding for $\varepsilon =0$.
From this, an actual embedding for $\varepsilon>0$ is obtained exactly as in 
Example~\ref{ex:3}.   

As in that example, given the diamond $\Diamond (a)$, 
we decompose the square $(0,1) \times (-\frac 12,\frac 12)$
into two rectangles, and fill the right rectangle 
$(a-1,1) \times (-\frac 12,\frac 12)$
with the four triangles, see Figure~\ref{fig.dist1}~(I).

%
\begin{figure}[h]
 \begin{center}
  \psfrag{x1}{$x_1$}
  \psfrag{x2}{$x_2$}
  \psfrag{1}{$\frac 12$}
  \psfrag{2}{$1$}
  \psfrag{-1}{$-\frac 12$}
  \psfrag{a}{$\frac a2$}
  \psfrag{a-}{$a-1$}
  \psfrag{1-}{$\frac{1-b}2$}
  \psfrag{1+}{$\frac{1+b}2$}
  \psfrag{ht}{$h_t$}
  \psfrag{hb}{$h_b$}
  \psfrag{I}{(I)}
  \psfrag{II}{(II)}
  \leavevmode\epsfbox{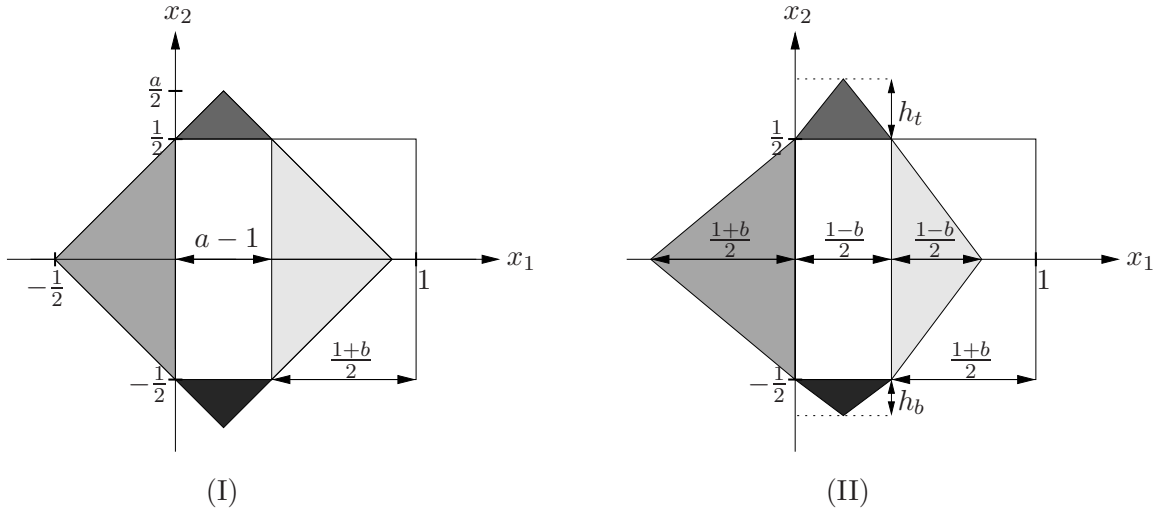}
 \end{center}
 \caption{The decomposition of $\Diamond (a)$, and its distortion.}
 \label{fig.dist1}
\end{figure}

The width of this rectangle is 
$$
1-(a-1) = 2-\sqrt 2 \,=: \frac{1+b}2,\mbox{ where } b := 3-2\sqrt 2.
$$
We start from a distorted diamond as in Figure~\ref{fig.dist1}~(II),  
whose left flap has width~$\frac{1+b}2$,
and so will just fit into the right rectangle.
The right flap of the distorted diamond then has width~$\frac{1-b}2$.
The height~$h_t$ of its top triangle will be chosen later. 
The height~$h_b$ of the bottom triangle is then determined by the fact that the sum of these two heights must be~$\frac{1-b}2$.

Figure~\ref{fig.full} shows the final position of the flaps in the right rectangle.

%
\begin{figure}[h]
 \begin{center}
  \psfrag{x1}{$x_1$}
  \psfrag{x2}{$x_2$}
  \psfrag{2}{$1$}
  \psfrag{1-}{$\frac{1-b}2$}
  \psfrag{1+}{$\frac{1+b}2$}
  \psfrag{2b}{$b$}
  \psfrag{ht}{$h_t$}
  \psfrag{hb}{$h_b$}
  \psfrag{R-}{$T_-$}
  \psfrag{R+}{$T_+$}
  \leavevmode\epsfbox{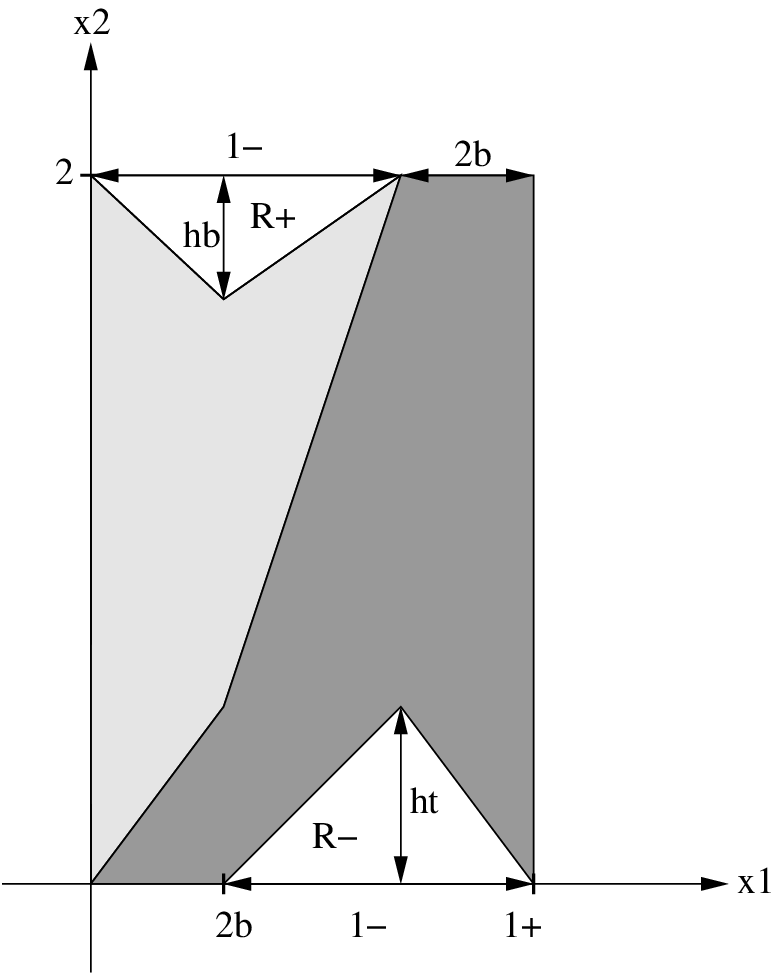}
 \end{center}
 \caption{The final position of the flaps, where we have translated axes so that the origin lies at the corner of the right rectangle.}
\label{fig.full}
\end{figure}

%
\begin{figure}[h]
 \begin{center}
 \psfrag{x1}{$x_1$}
  \psfrag{x2}{$x_2$}
  \psfrag{1}{$1$}
  \psfrag{b}{$b$}
  \psfrag{1-}{$\frac{1-b}{2}$}
  \psfrag{1+}{$\frac{1+b}{2}$}
  \psfrag{2b}{$\frac{2b}{1+b}$}
  \psfrag{2b-}{$\frac{2b}{1-b}$}
  \psfrag{fb}{$\frac{1-b}{1+b}$}
  \psfrag{ht}{$h_t$}
  \psfrag{hb}{$h_b$}
  \psfrag{f1}{$\varphi_1$}
  \psfrag{f2}{$\varphi_2$}
  \psfrag{p1}{$\psi_1$}
  \psfrag{p2}{$\psi_2$}
  \psfrag{l}{$\ell$}
  \leavevmode\epsfbox{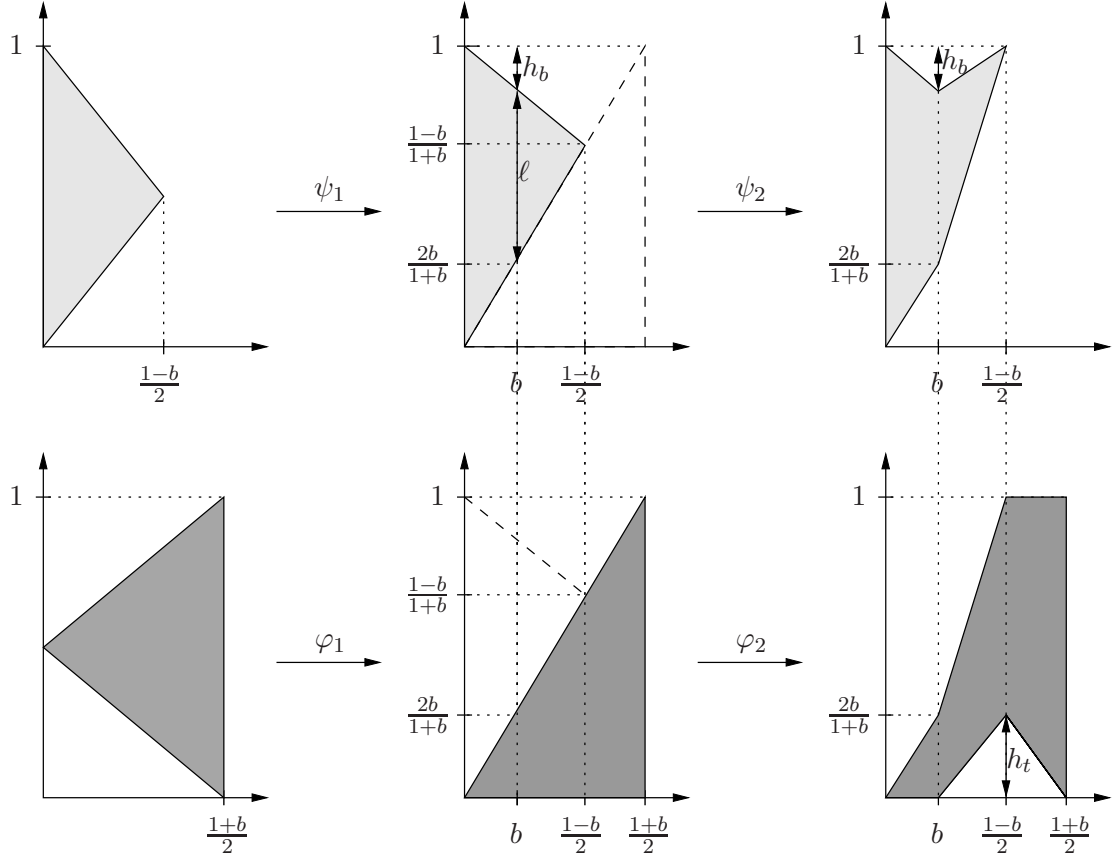}
 \end{center}
 \caption{Filling the right rectangle with the distorted triangles.
	In the middle figure the darker flap has been moved down to fill half the rectangle and the lighter flap   has been correspondingly sheared up.  The right figure shows the effect of a further vertical shear,       leaving two empty triangular regions~$T_\pm$, one at the top and one at the bottom.}
 \label{fig.moves}
\end{figure}

The two remaining triangles~$T_{\pm}$ both have base of length $\frac{1-b}2$, which agrees with that of the triangles in Figure~\ref{fig.dist1}~(II).  
Therefore it remains to check that the heights $h_t, h_b$ of $T_{\pm}$ sum to $\frac{1-b}2$. 
Note that the size of the diamond was chosen so that the total area of the triangles $T_\pm$ left uncovered by the flaps in Figure~\ref{fig.full} equals the sum of the areas of the original (undistorted) top and bottom triangles. Since the base sides of~$T_\pm$ already have the correct length $\frac {1-b}{2}$, this equality of area forces their heights to add up to the correct
amount~$\frac{1-b}{2}$ as well. 
In particular, these heights can be achieved by a suitable distortion.

%
Alternatively, one can explicitly compute the sum of $h_t$ and $h_b$ as follows. Figure~\ref{fig.moves} shows how the flaps are 
sheared vertically so that they fit into this rectangle.
The image of the shear~$\psi_1$ in 
Figure~\ref{fig.moves}~(center top) shows that
$h_b + \ell + \frac{2b}{1+b} = 1$, where $\ell = \frac{1-3b}{1-b}$ is the length of 
the intersection of the lighter flap with the vertical line at~$x_1=b$.  
Further, looking at the shear~$\varphi_2$ we see that $h_t =1-\frac{1-b}{1+b} = \frac{2b}{1+b}$.
Therefore $h_b+h_t = 1-\ell = \frac{2b}{1-b}$.
Hence 
%
%
$h_b+h_t=\frac{1-b}2$ if $\frac{2b}{1-b}=\frac{1-b}2$, 
or equivalently if $b^2 - 6b +1=0$. 
But this holds because we defined $b=3-2\sqrt 2$.

In any case, as
in Example~\ref{ex:3}, 
we can therefore use an $x_1$-shear to bring the top triangle of the distorted diamond 
into~$T_-$ and the bottom triangle into~$T_+$.
\proofend


\section{Proofs of Corollaries~\ref{c:cone} and ~\ref{c:1}} \label{s:cone}

%
\begin{proof}[Proof of Corollary~\ref{c:cone}]
Consider the $4$-torus $T^4$ and its blow-up $\Tilde X = T^4 \sharp \overline{\CC P^2}$.
Fix an orientation of~$\Tilde X$. 
Denote by $E \in H_2(\Tilde X;\ZZ)$ the homology class 
of the exceptional divisor (with some orientation) in~$\Tilde X$.
We need to show that the symplectic cone of~$\Tilde X$ is
\begin{equation} \label{e:inc}
\cc(\Tilde X) \,=\, 
\left\{ \alpha \in H^2(\Tilde X;\RR) \mid \alpha^2>0,\, \alpha(E) \ne 0 \right\} .
\end{equation}

We first prove the inclusion $\subset$ in~\eqref{e:inc}. 
The condition $\alpha^2>0$ holds because $\alpha$ is represented 
by a symplectic form compatible with the given orientation of~$\Tilde X$.
The condition $\alpha (E) \neq 0$ follows from 
Taubes' work on the relation between Seiberg--Witten and Gromov invariants,
according to which for any symplectic form~$\om$ on~$\Tilde X$ the class~$E$ 
is representable by an embedded sphere on which~$\om$ is non-degenerate, 
see~\cite{Taubes.announcement,Taubes.book}. 

\s
We now prove the inclusion $\supset$ in~\eqref{e:inc}.
The projection $\pi \colon \Tilde X \to T^4$ induces an orientation on~$T^4$. 
The classes in $H^2 (\Tilde X;\RR)$ can be written as 
$\pi^* \beta - a \;\!e$ where $\beta \in H^2 (T^4;\RR)$ and $a \in \RR$,
and where $e = \PD (E)$ is the Poincar\'e dual of~$E$.
Since $e^2 = -1$, the set on the right hand side of~\eqref{e:inc} becomes
$$
\left\{ \pi^* \beta -a\;\!e \in H^2(\Tilde X;\RR) \mid 
                       \beta \in H^2(T^4;\RR), \,\beta^2 > a^2 > 0 \right\} .
$$
Fix $\beta \in H^2(T^4;\RR)$ and $a >0$ with $\beta^2 > a^2 >0$.
Since $\beta^2 >0$, we can represent $\beta$ by a linear symplectic form on~$T^4$
compatible with the given orientation.
Since $\beta^2 > a^2 >0$, Theorem~\ref{t:1} guarantees a symplectic embedding 
of $B^4(a)$ into~$(T^4,\omega)$. 
The symplectic form on the corresponding symplectic blow-up of $(T^4,\omega)$ 
represents the class $\pi^*\beta - a' e$, where either $a'=a$ or $a'=-a$.

It is well known (see e.g.~\cite[Lemma 2]{W}) that there exists an orientation preserving diffeomorphism $\varphi \colon \Tilde X \to \Tilde X$ of the blow-up which acts on $H^2(\Tilde X;\R)$ by sending~$e$ to $-e$ while fixing the orthogonal complement $\pi^*\left(H^2(T^4;\R)\right)$ of~$e$ 
(with respect to the cup product pairing).\footnote{Such a diffeomorphism $\varphi$ can be constructed explicitly as follows: 
The map $c \colon \overline{\C P^2} \to \overline{\C P^2}$ given by 
$c([z_0:z_1:z_2]) = [\bar z_0:\bar z_1:\bar z_2]$ is orientation preserving and reverses 
the orientation of all complex lines. 
By an isotopy supported near the fixed point~$p_0=[1:0:0]$ we can deform~$c$ to a diffeomorphism~$c'$ fixing a neighborhood of~$p_0$, and if the connected sum to construct the blow-up is performed in this neighborhood, then $\varphi$ is obtained by glueing~$c'$ to the identity map on the torus.}
%
%
%
In particular, $\varphi^* (\pi^*\beta - a' e) = \pi^*\beta + a' e$.
Therefore, both $\pi^*\beta - a \;\!e$ and $\pi^*\beta + a \;\!e$ belong to 
the symplectic cone~$\cc(\Tilde X)$.
\end{proof}

\begin{proof}[Proof of Corollary~\ref{c:1}]
We denote by $\om$ a symplectic form on~$T^4$ 
such that $(T^4,\om)$ is symplectomorphic to~$T(1,1)$.
Let again $\Tilde X$ be the blow-up of $T^4$, 
and let $E$ be the class of the exceptional divisor~$\Sigma$ (with orientation specified later).
By Corollary~\ref{c:cone}, 
the class $\pi^*[\omega] - a \PD(E)$ admits a symplectic representative for all $0<a<\sqrt{2}$.
We shall show that for $a > \frac 43$, this class admits no K\"ahler representative.

Let $\alpha$ be a K\"ahler form on $\Tilde X$, 
and let $\pi^*[\omega] - a \PD(E)$ be its class. 
By the Enriques--Kodaira classification, the complex manifold $(\Tilde X, \Tilde J)$
underlying the K\"ahler manifold $(\Tilde X, \alpha)$ is the complex blow-up of 
a complex torus~$(T,J)$. 
Orient $\Sigma$ by~$\Tilde J$.
The K\"ahler form~$\alpha$ is positive on all non-constant $\Tilde J$-holomorphic curves in~$\Tilde X$.
In particular, $a = \int_\Sigma \alpha >0$,
and given a non-constant $J$-holomorphic curve~$C$ in~$T$ with proper transform~$\Tilde C$, 
we have $E \cdot [\Tilde C] \ge 0$ and
\begin{equation} \label{e:NM}
0 \,<\, \bigl( \pi^*[\omega]- a \PD(E) \bigr) \bigl( [\Tilde C] \bigr) \,\le\, [\omega] \bigl( [C]\bigr) .
\end{equation}
The Nakai--Moishezon criterion thus implies that $[\omega]$ is an ample class on~$T$.
Hence $[\omega]$ gives a principal polarization of~$T$.
Comparing definition~\eqref{def:Seshadri} with the left inequality in~\eqref{e:NM}
we see that its Seshadri constant is at least~$a$.
Together with Steffens' estimate~\eqref{eq:seshadri_standard_4torus} we thus find $a \le \frac 43$.
\end{proof}

\section{Remarks and Questions} \label{s:quest}

\ni
{\bf 1.  Symplectic forms on $T^4$.}  
We have worked throughout with a linear symplectic form on~$T^4$.  
It is not known whether every symplectic form on~$T^4$ is isotopic to a linear form, 
or even whether it is symplectomorphic to such a form.


\MS
\ni
{\bf 2. Very full fillings.}  
There is a stronger version of full filling: rather than asking whether one can fill 
an arbitrarily large fraction of the volume of a manifold~$M$ with a ball, one could ask whether~$M$ 
has a set of full measure that is symplectomorphic to an open ball. 
In other words, if $a = c_G(M,\om)$ does the open ball~$B^4(a)$ embed symplectically in~$M$?  
Let us say that in this case $(M,\om)$ has a very full filling (by one ball).
(There are similar versions for other filling problems.)
When a rational or ruled manifold has a full filling, it also has a very full filling 
because one can argue as in the proof of Lemma~\ref{l:nest}, using the fact that  
in this case the space of ball embeddings is connected. However, these general arguments 
do not apply to tori, and it is unclear whether~$T(1,1)$, for example, 
has a very full filling by one ball.
On the other hand, the explicit fillings in Example~\ref{ex:1} and Remark~\ref{rem:expl} 
give very full fillings.

\MS
\ni
{\bf 3. The isotopy problem.}
For some symplectic four-manifolds $(M,\omega)$,
such as the complex projective plane or a product of $2$-spheres, 
it is known that the space of symplectic embeddings of a given (closed) ball
into~$(M,\omega)$ is connected, see~\cite{M-blowup, M-isotopy}.
For tori, this is a completely open problem. 
For many balls $B^4(a)$, our embedding constructions yield various symplectic embeddings
into tori~$T(\mu,1)$, for which we do not know whether they are symplectically isotopic.

As a first example, consider,
for some fixed small~$\varepsilon >0$,  
the symplectic embeddings of a ball filling $\frac 89-\varepsilon$ of~$T(1,1)$
that are illustrated, for $\varepsilon =0$, in Figure~\ref{fig:isotopy}.
Here, the embedding $(+-)$ is the one of Example~3 in Section~\ref{ss:shears},
and the other three embeddings are obtained in the same way. 
Are these balls symplectically isotopic in~$T(1,1)$?
Note that, for instance, the (not Hamiltonian) symplectomorphism 
$(x_1, y_1, x_2, y_2) \mapsto (-x_1, -y_1, x_2, y_2)$
of~$T(1,1)$ maps the ball~$(++)$ to $(--)$ and maps $(+-)$ to $(-+)$.

%
\begin{figure}[ht]
 \begin{center}
   \psfrag{x1}{$x_1$}
   \psfrag{x2}{$x_2$}
   \psfrag{++}{$(++)$}
   \psfrag{--}{$(--)$}
   \psfrag{+-}{$(+-)$}
   \psfrag{-+}{$(-+)$}
 \leavevmode\epsfbox{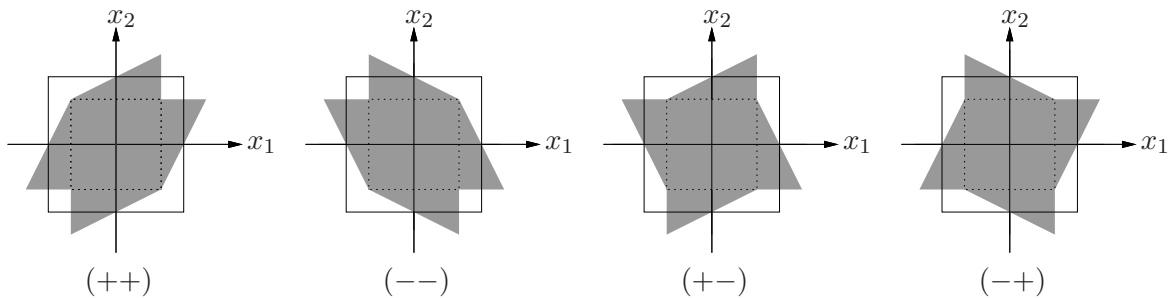}
 \end{center}
 \caption{Four embeddings of $B^4(\frac 43)$ into $T(1,1)$.}
  \label{fig:isotopy}
\end{figure}
%
%

As a second example, consider the following two full fillings of $T(\frac 98,1)$:
The first filling is the one obtained from the explicit full filling of~$T(1,72)$
via Lemma~\ref{lem:type}.
The second filling is similar to the embedding in Example~3 of Section~\ref{ss:shears}.
We decompose the diamond~$\Diamond (\frac 32)$ and the rectangle $(0,\frac 98) \times (0,1)$ 
as in~Figure~\ref{fig.98}~(I), and shear the triangles as shown 
in~Figure~\ref{fig.98}~(II).\footnote
{
In fact, this is the first in a family of full fillings of
$T \bigl( \frac{(2k+1)^2}{2(k+1)^2}, 1 \bigr)$, $k \ge 1$,
by the diamond $\Diamond(\frac{2k+1}{k+1})$
in which the top triangle $x_2 \ge \frac 12$
is sliced into $k$  horizontal slices of
heights
       $\frac{1}{(k+1)^2},\frac{2}{(k+1)^2},\dots,\frac{k}{(k+1)^2}$
and then
sheared to the right
so that the right edge of the $j$th piece lies over an
$x_1$-interval of length
      $\frac{2j-1}{2(k+1)^2}$,
while the bottom triangle is sheared symmetrically
to the left.
}
These two embeddings are clearly different. Are they symplectically isotopic?

%
\begin{figure}[ht]
 \begin{center}
   \psfrag{x1}{$x_1$}
   \psfrag{x2}{$x_2$}
   \psfrag{1}{$\frac 12$}
   \psfrag{-1}{$-\frac 12$} 
   \psfrag{2}{$1$}
   \psfrag{32}{$\frac 34$}
   \psfrag{94}{$\frac 98$} 
   \psfrag{-14}{$-\frac 18$}
   \psfrag{54}{$\frac 58$}  
 \leavevmode\epsfbox{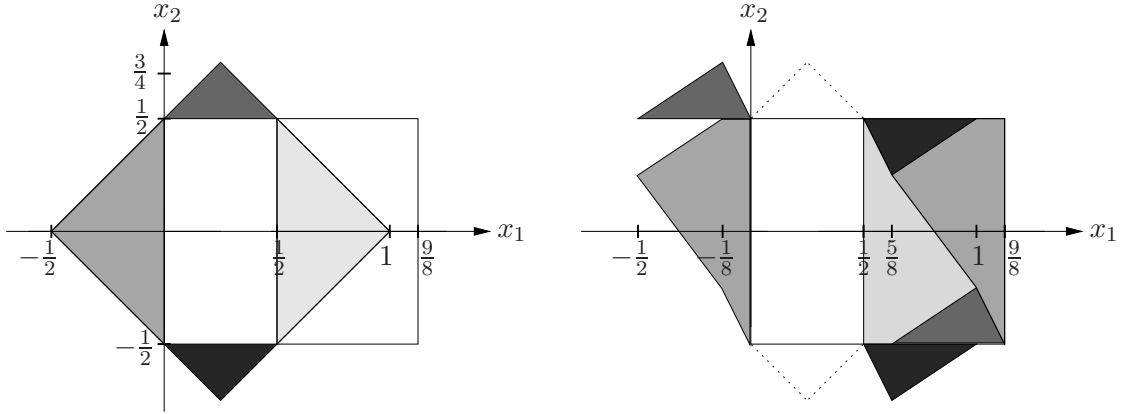}
 \end{center}
 \caption{Another full filling of $T(\frac 98,1)$.}
 \label{fig.98}
\end{figure}
%
%

More generally, 
it is not known whether there is any $\varepsilon \in (0,\sqrt 2)$  such that the space of symplectic embeddings of~$B^4(\varepsilon)$ into~$T(1,1)$ is connected.

\medskip \ni
{\bf 4. Uniqueness of symplectic structures on the blow-up of~$T^4$.}
Suppose that $\om_1,\om_2$ are two cohomologous symplectic forms on the blow-up 
of a given symplectic $4$-manifold~$(M,\om)$
that are obtained by blow-up from two ball embeddings into~$M$. 
Then it is shown in~\cite[Prop.~7.20]{MS} that $\om_1, \om_2$
are isotopic if and only if the two ball embeddings are symplectically isotopic.
Moreover, 
the easiest way to show that two blow-ups of some symplectic $4$-manifold are symplectomorphic 
is to use this equivalence. 
(Of course sometimes, as with the $(++), (- -)$ embeddings mentioned in 4.\ above, 
there are obvious symplectomorphisms that take one embedding to another and hence one blow-up to the other.)
Since the symplectic isotopy problem 
for embeddings of balls in $4$-dimensional tori is open,
the uniqueness problem for symplectic forms on their blow-ups is open too, 
even if we restrict consideration to forms on the blow-up that blow down to linear forms.

\medskip \ni
{\bf 5. Higher dimensions.}
The filling methods used in \S\ref{s:pack} work only in dimension~$4$.
Although many of the explicit arguments in~\S\ref{s:basic} extend to higher dimensions,
the higher dimensional analogs of the diamond~$\Diamond$ (e.g.\/ the octahedron) 
do not tile Euclidean space. 
Therefore there seem to be no simple explicit full fillings of tori by balls 
in higher dimensions along the lines of Example~\ref{ex:1}.
As we explained in Section~\ref{ss:higherdim},
one can get some (presumably rather weak) lower bounds 
for the ball filling number of tori of dimension $2n \ge 6$ from
the computations of Seshadri constants in~\cite{BauerSzemberg:threefolds} and~\cite{Debarre:higher}.  
For example, when $n=3,4$ we have:
$$
p(\T^6) \ge \frac{288}{343} \quad \mbox{ and } \quad p(\T^8) \ge \frac 23 .
$$  
It is not clear how to do better than this, 
or even how to realize these bounds by explicit embeddings.
It is also not clear how to find explicit embeddings in dimension~$4$ that do better than 
some of the sharper Seshadri constants, for example $\frac {360}{361}$ for~$T(1,5)$.

%

\medskip \ni
{\bf 6. Packings by cubes and polydiscs.}
Instead of looking at symplectic embeddings of balls,
one may study symplectic embeddings of cubes and polydiscs~$B^2(a_1) \times \dots \times B^2(a_n)$. 
(In fact, the problem of symplectically packing certain domains in~$\RR^{2n}$ 
by equal cubes has been an original motivation to consider symplectic packings, 
see~\cite{FP,MP}.)
This problem has been much less studied, since (to our knowledge) even in dimension four 
the problem of embedding a polydisc cannot be reduced to the problem of embedding a
collection of balls.
Ekeland--Hofer capacities, \cite{EH2}, and 
the new 4-dimensional invariants from embedded contact homology, \cite{Hu}, 
provide obstructions for symplectic embeddings of polydiscs into certain manifolds.
A few symplectic embedding constructions for polydiscs are described in~\cite{Sch-book};
e.g., the shears in Section~7.1 show that $T(1,1)$ can be fully filled by $B^2(1/k) \times B^2(k)$
for each $k \in \NN$. 
Similarly, the dense set of product tori $T \bigl(1, \frac{m^2}{n^2} \bigr)$
with $m,n \in \NN$ relatively prime can be fully filled by a cube.
Indeed, by Remark~\ref{r:normalform}, it suffices to fill $T(1,k^2)$ by a cube
for each $k \in \NN$. 
For this, view $B^2(k) \times B^2(k)$ as $(-\frac k2, \frac k2)^2 \times (0,1)^2 \subset \RR^2(\bx) \times \RR^2(\by)$, and apply 
(similar to the map in Example~\ref{ex:1}) 
the $x_1$-shear defined by
$\varphi (x_1,x_2) = (x_1+kx_2,x_2)$.



\begin{thebibliography}{99}

\bibitem{Bauer:abelian} 
T.~Bauer. 
Seshadri constants and periods of polarized abelian varieties. 
With an appendix by the author and T.~Szemberg.
{\it Math.~Ann.}~{\bf 312} (1998) 607--623.


\bibitem{Bauer:surfaces} 
T.~Bauer. 
Seshadri constants on algebraic surfaces. 
{\it Math.~Ann.}~{\bf 313} (1999) 547--583.


\bibitem{BauerSzemberg:threefolds} 
T.~Bauer and T.~Szemberg.
Local positivity of principally polarized abelian threefolds. 
{\it J.~Reine Angew.~Math.}~{\bf 531} (2001) 191--200.


\bibitem{B1} 
P.~Biran.
Symplectic packing in dimension 4.
{\it Geom.\ Funct.\ Anal.}~{\bf 7} (1997) 420--437.

\bibitem{B2} 
P.\ Biran.
A stability property of symplectic packing.
{\it Invent.\ Math.}~{\bf 136} (1999) 123--155.

\bibitem{BC} 
P.~Biran and K.~Cieliebak.
Symplectic topology on subcritical manifolds.
{\it Comment.\ Math.~Helv.}~{\bf 76} (2001) 712--753.


\bibitem{Birkenhake} 
C.~Birkenhake and H.~Lange. 
{\it Complex abelian varieties}. 
Grundlehren der mathematischen Wissenschaften, vol.~302, 2nd edition, 
Springer-Verlag, Berlin, 2004.


\bibitem{Buchdahl} 
N.~Buchdahl.
On compact K\"ahler surfaces.
{\it Ann.~Inst.\ Fourier}~{\bf 49} (1999) 287--302.





\bibitem{BP}
O.~Buse and M.~Pinsonnault. 
Packing numbers of rational ruled 4-manifolds.
arXiv:1104.3362 


\bibitem{CP}
P.~Cascini and D.~Panov. 
Symplectic generic complex structures on 4-manifolds with $b_+ = 1$.
arXiv:1012.3644. 
To appear in {\it Journal of Symplectic Geometry}.  


\bibitem{COP} 
F.~Catanese, K.~Oguiso and T.~Peternell.
On volume preserving complex structures on real tori.
{\it Kyoto J.\ Math.}~{\bf 50} (2010) 753--775.


\bibitem{Debarre:higher} 
O.~Debarre.
{\it Higher-dimensional algebraic geometry}. 
Universitext, Springer-Verlag, New York, 2001. 



\bibitem{Dra} 
T.~Dr\u{a}ghici.
The K\"ahler cone versus the symplectic cone.
{\it Bull. Math. Soc. Sci. Math. Roumanie}~{\bf 42} (1999) 41--49.

   
\bibitem{EH2}
I.~Ekeland and H.~Hofer, 
Symplectic topology and Hamiltonian dynamics. II.
{\it Math.\ Z.}~{\bf 203} (1990) 553--567.

   

\bibitem{ElencwajgForster} 
G.~Elencwajg and O.~Forster.
Vector bundles on manifolds without divisors and a theorem on deformations.
{\it Ann.\ Inst.\ Fourier}~{\bf 32} (1982) 25--51.


\bibitem{FP}
C.~Fefferman and D.~H.~Phong, 
The uncertainty principle and sharp G$\mathring{\text{a}}$rding inequalities.  
{\it Comm.\ Pure Appl.\ Math.}~{\bf 34} (1981) 285--331.


\bibitem{Gr}
M.~Gromov.
Pseudo-holomorphic curves in symplectic manifolds.
{\it Invent. math.}~{\bf 82} (1985) 307--347.




\bibitem{HZ}
H.~Hofer and E.~Zehnder.
{\it Symplectic Invariants and Hamiltonian Dynamics}.
Birkh\"auser, Basel, 1994.


\bibitem{Hu}
M.~Hutchings.
Quantitative embedded contact homology. 
{\it Journal of Differential Geometry}~{\bf 88} (2011) 231--266. 


\bibitem{Ji}
M.-Y.\ Jiang.
Symplectic embeddings from $\RR\sp {2n}$ into some manifolds.
{\it Proc.\ Roy.\ Soc.\ Edinburgh Sect.~A}~{\bf 130} (2000) 53--61.


\bibitem{KT}
Y.~Karshon and S.~Tolman.
The Gromov width of complex Grassmannians.
{\it Algebr.\ Geom.\ Topol.}~{\bf 5} (2005) 911--922.


\bibitem{Lagrange}
J.~L.~Lagrange.
Solution d'un Probl\`eme d'Arithm\'etique.
{\it Oeuvres}, tome~1, 671--732.


\bibitem{LM}
F.\ Lalonde and D.\ Mc\;\!Duff.
The geometry of symplectic energy.
{\it Ann.\ of Math.}~{\bf 141} (1995) 349--371.


\bibitem{LM1} 
F.~Lalonde and D.~Mc\;\!Duff.
Hofer's $L^\infty$-geometry:
energy and stability of Hamiltonian flows, part II.
{\it Invent.~math.}~{\bf 122} (1995) 35--69.


\bibitem{LM.class} 
F.~Lalonde and D.~Mc\;\!Duff.
The classification of ruled symplectic $4$-manifolds.  
{\it Math.\ Res.\ Lett.}~{\bf 3} (1996) 769--778.


\bibitem{Lamari} 
A.~Lamari. 
Le cone K\"ahl\'erien d'une surface. 
{\it J. Math. Pures Appl.}~{\bf 78} (1999) 249--263.


\bibitem{Lazarsfeld:seshadri} 
R.~Lazarsfeld.
Lengths of periods and Seshadri constants of abelian varieties. 
{\it Math.~Res.~Lett.} (1996) 439--447.


\bibitem{Lazarsfeld:positivityI} 
R. Lazarsfeld.
{\it Positivity in algebraic geometry. I.}
Ergebnisse der Mathematik und ihrer Grenzgebiete.
3.~Folge, vol.~48, 
Springer-Verlag, Berlin, 2004. 




 
\bibitem{Li-Usher}
T.-J.~Li and M.~Usher.
Symplectic forms and surfaces of negative square.
J.\/ Symplectic Geom.~{\bf 4} (2006) 71--91.


\bibitem{Lu}
G.\ Lu.
Gromov-Witten invariants and pseudo symplectic capacities.
{\it Israel J. Math.}~{\bf 156} (2006), 1--63.


\bibitem{M-blowup}
D.~Mc\;\!Duff.
Blowups and symplectic embeddings in dimension~$4$.
{\it Topology}~{\bf 30} (1991) 409--421.


\bibitem{M-isotopy}
D.~Mc\;\!Duff.
From symplectic deformation to isotopy. 
Topics in symplectic $4$-manifolds (Irvine, CA, 1996), 85--99,
{\it First Int.\ Press Lect.\ Ser., I}, Int. Press, Cambridge, MA, 1998. 


\bibitem{M-variants}
D.~Mc\;\!Duff.
Geometric variants of the Hofer norm.
{\it J.~Symplectic Geom.}~{\bf 1} (2002) 197--252.


\bibitem{MP}
D.\ Mc\,Duff and L.\ Polterovich.
Symplectic packings and algebraic geometry.
{\it Invent.\ math.}~{\bf 115} (1994) 405--429.


\bibitem{MS}
D.~Mc\;\!Duff and D.~Salamon.
{\it Introduction to symplectic topology.} 
Second edition. Oxford Mathematical Monographs. 
The Clarendon Press, Oxford University Press, New York, 1998.





\bibitem{MSl}
D.~Mc\;\!Duff and J.~Slimowitz.
Hofer-Zehnder capacity and length minimizing Hamiltonian paths.
{\it Geom.\ Topol.}~{\bf 5} (2001) 799--830.



\bibitem{Sch-book}
F.~Schlenk.
{\it Embedding problems in symplectic geometry.}
de Gruyter Expositions in Mathematics~{\bf 40}.
Walter de Gruyter Verlag, Berlin, 2005.


\bibitem{Steffens} 
A.~Steffens.
Remarks on Seshadri constants. 
{\it Math.~Z.}~{\bf 227} (1998) 505--510.


\bibitem{Tr} 
L.~Traynor.
Symplectic packing constructions.  
{\it J.~Differential Geom.}~{\bf 41} (1995) 735--751.



\bibitem{Taubes.announcement} 
C.~H.~Taubes.
The Seiberg--Witten and Gromov invariants.
{\it Math.\ Res.\ Lett.}~{\bf 2} (1995) 221--238.

\bibitem{Taubes.book} 
C.~H.~Taubes.
{\it Seiberg--Witten and Gromov Invariants in Symplectic $4$-manifolds.} 
First International Press Lecture Series, 2. 
International Press, Somerville, MA, 2000.  


\bibitem{W} 
C.~T.~C.~Wall.
Diffeomorphisms of $4$-manifolds.  
{\it J.\ London Math. Soc.}~{\bf 39} (1964) 131--140. 

\end{thebibliography}
\end{document}